\newtheorem{lemm}{Lemma}[section]
\newtheorem{prop}[lemm]{Proposition}
\newtheorem{coro}[lemm]{Corollary}
\newtheorem{theo}[lemm]{Theorem}
\newtheorem{fact}[lemm]{Fact}
\theoremstyle{remark}
\newtheorem{rem}[lemm]{Remark}
\newtheorem{exple}[lemm]{Example}
\newtheorem{setup}[lemm]{Set-up}
\newtheorem*{merci}{Aknowledgements}
\newcommand{\Z}[1]{\mathbb{Z}/#1\mathbb{Z}}
\newcommand{\C}{\mathbb{C}}
\newcommand{\z}{\mathbb{Z}}
\newcommand{\NE}{\mathrm{NE}}
\newcommand{\Pic}[1]{\mathrm{Pic}(#1)}
\newcommand{\p}{\mathbb{P}}
\newcommand{\parent}[1]{\textup{(}#1\textup{)}}
\newcommand{\hilb}{\mathcal{H}^S_{g,d}}
\def\eps{{\varepsilon}}
\def\cpo{{\mathbb{P}^1}}
\def\cpd{{\mathbb{P}^2}}
\def\cpt{{\mathbb{P}^3}}
\def\cpq{{\mathbb{P}^4}}
\def\No{\mathcal{N}}
\def\Ol{\mathcal{O}}
\def\Z{\mathbb{Z}}
\def\A{\mathcal{A}}
\newcommand{\vbad}[1]{\xcancel{#1}}
\newcommand{\bad}[1]{{#1}*}
\begin{document}

\title[Weak Fano threefolds obtained by blowing-up a space curve]{Weak Fano threefolds obtained by blowing-up a space curve and construction of Sarkisov links}
\date{March 2012}
\author{J\'er\'emy Blanc}
\address{Mathematisches Institut \\ 
Universit\"at Basel \\
Rheinsprung 21 \\
CH-4051 Basel \\
Switzerland}
\email{jeremy.blanc@unibas.ch}

\author{St\'ephane Lamy}
\address{Mathematics Institute\\
        University of Warwick \\
        Coventry \\
        United Kingdom }
\curraddr{Institut de Math\'ematiques de Toulouse,
	     Universit\'e Paul Sabatier, 	
	     118 route de Narbonne,
	     31062 Toulouse Cedex 9, France}
\email{slamy@math.univ-toulouse.fr}
\thanks{The first author was supported by the Swiss National Science Foundation grant no PP00P2\_128422/1.}
\thanks{The second author was supported by a Marie Curie IntraEuropean Fellowship, on leave from the Institut Camille Jordan, Universit\'e Lyon 1.}

\begin{abstract}
We characterise smooth curves in $\mathbb{P}^3$ whose blow-up produces a threefold with anticanonical divisor big and nef. 
These are curves $C$ of degree $d$ and genus $g$ lying on a smooth quartic, such that  
(i) $4d-30 \le g\le 14$ or $(g,d) = (19,12)$, 
(ii) there is no $5$-secant line, $9$-secant conic, nor $13$-secant twisted cubic to $C$. 
This generalises the classical similar situation for the blow-up of points in $\mathbb{P}^2$.

We describe then Sarkisov links constructed from these blow-ups, and are able to prove the existence of Sarkisov links which were previously 
only known as numerical possibilities. 
 
\end{abstract}
\subjclass{14E05, 14E30}

\maketitle

\section{Introduction}

A classical result in the theory of algebraic surfaces is that 
the blow-up of $d$ points in $\p^2$ gives a \emph{del Pezzo surface} (i.e. a surface $X$ with $-K_X$ ample) if and only if the points satisfy the following conditions: 

\begin{enumerate}[$(i)$]
\item The number $d$ of points is at most $8$;
\item No $3$ are collinear, no $6$ are on the same conic, and no $8$ belong to the same cubic being singular at one the $8$ points.
\end{enumerate}

More generally, the blow-up of the $d$ points gives a \emph{weak del Pezzo surface} (i.e. a surface $X$ with $-K_X$ nef and big) if and only if the points satisfy the following conditions (see \cite[III.2, Theorem 1]{Demazure}): 

\begin{enumerate}[$(i)$]
\item The number $d$ of points is at most $8$;
\item No $4$ are collinear, and no $7$ are on the same conic.
\end{enumerate}

Moreover, in both cases conditions $(ii)$ are open conditions on the set of $d$-uples of points of $\p^2$ for $1\le d\le 8$.

Note that these statements can easily be extended to the blow-up of points in higher dimension: we shall derive in \S \ref{sec:pointsP3} a similar characterisation of weak Fano threefolds arising from the blow-up of $d$ points in $\cpt$.\\ 

In this paper we study the similar but more difficult problem for the blow-up of a curve  in $\p^3$. 
We prove in particular that the blow-up of a smooth irreducible curve $C$ of genus $g$ and degree $d$ in $\cpt$ is a weak-Fano threefold if and only if:
\begin{enumerate}[$(i)$]
\item $4d-30 \le g\le 14$ or $(g,d) = (19,12)$;
\item there is no $5$-secant line, $9$-secant conic, nor $13$-secant twisted cubic to $C$, and $C$ is contained in a smooth quartic.
\end{enumerate}

We obtain in fact a more precise result, which is our main theorem below.
Here we denote by $\hilb$ the Hilbert scheme of smooth irreducible curves of genus $g$ and degree $d$ in $\cpt$. 
We always work over the base field $\C$ of complex numbers.

\begin{theo}[see Table~$\ref{tab:bignefmovable}$] \label{thm:gendelpezzo}
Let $C \in \hilb$ be a curve of genus $g$ and degree~$d$. 
We denote by $X$ the blow-up of $\cpt$ along $C$ and by $-K_X$ its anticanonical divisor.
Consider the sets
\begin{align*}
\A_0 &= \{(6,5), (10,6), (8,8), (12,9)\};\\
\A_1 &= \{(0,1), (0,2), (0,3), (0,4), (1,3), (1,4), (2,5), (4,6)\};\\
\A_2 &= \{  (1,5), (3,6), (5,7), (10,9)\}; \\
\A_3 &= \{(0,5), (0,6), (1,6), (2,6), (3,4),\\
   &\hspace{2cm} (3,7), (4,7), (6,7), (6,8), (7,8), (9,8), (9,9), (12,10), (19,12)\}; \\
\A_4 &= \{(0,7), (1,7), (2,7), (2,8), (3,8),\\
   &\hspace{2cm} (4,8), (5,8), (6,9), (7,9), (8,9), (10,10), (11,10), (14,11)\}.
\end{align*}
These pairs form a partition of all $(g,d)$ corresponding to non-empty $\hilb$ with $4d-30 \le g\le 14$ or $(g,d) = (19,12)$, and we have the following characterisations:
\begin{enumerate}[$(i)$]
\item
The variety $X$ is Fano, i.e.\ $-K_X$ is ample, if and only if  
$(g,d) \in \A_1$, or $(g,d) \in \A_2$ and there is no $4$-secant line to $C$;
\item
The variety $X$ is weak Fano, i.e. $-K_X$ is big and nef, if and only if one of the following condition holds:
\begin{itemize}
\item $(g,d) \in \A_1 \cup \{(3,6)\}$;
\item $(g,d) \in  \A_2 \smallsetminus \{(3,6)\}$, and there is no $4$-secant line to $C$; 
\item $(g,d) \in \A_3$, there is no $5$-secant line to $C$, and $C$ is contained in a smooth quartic; 
\item $(g,d) \in \A_4$, there is no $5$-secant line, $9$-secant conic, nor $13$-secant twisted cubic to $C$, and $C$ is contained in a smooth quartic.
\end{itemize}
\end{enumerate}
Moreover, Condition $(i)$ corresponds to a non-empty open subset of $\hilb$ if $(g,d)\in \A_2$, and Condition  $(ii)$ corresponds to a non-empty open subset of $\hilb$ if  
$(g,d)\in \A_3 \cup \A_4$.
Furthermore, for a general curve in these sets, there are finitely many irreducible curves intersecting trivially $K_X$, except for $(g,d)\in \{(3,4)$, $(6,7)$, $(9,8)$, $(12,10)$, $(19,12)\}$.
\end{theo}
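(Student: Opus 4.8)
The plan is to translate the finiteness question into a statement about the anticanonical contraction. On the pairs of $\A_3\cup\A_4$ satisfying condition $(ii)$ the divisor $-K_X$ is big and nef, so, $X$ being smooth, the base-point-free theorem shows that $-K_X$ is semiample; let $\psi\colon X\to\bar X$ be the associated birational morphism onto the anticanonical model, with $-K_X=\psi^*A$ for an ample divisor $A$ on $\bar X$. For an irreducible curve $\Gamma\subset X$ one has $K_X\cdot\Gamma=-A\cdot\psi_*\Gamma$, which vanishes if and only if $\Gamma$ is contracted by $\psi$. Hence the irreducible curves meeting $K_X$ trivially are exactly the irreducible components of $\mathrm{Exc}(\psi)$, and there are finitely many of them precisely when $\psi$ is a small contraction, i.e.\ when $\psi$ contracts no divisor. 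So everything reduces to deciding, for each $(g,d)$, whether $\psi$ contracts a prime divisor.

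I would first list the candidate contracted divisors. Using $H^3=1$, $H^2\cdot E=0$, $H\cdot E^2=-d$ and $E^3=2-2g-4d$, a direct computation gives
\[
(-K_X)^2\cdot E=4d-2g+2,\qquad (-K_X)^2\cdot(nH-mE)=n(16-d)-m(4d-2g+2).
\]
For a prime divisor $D$ with $-K_X$ nef, $D$ is contracted if and only if $(-K_X)^2\cdot D=0$. On the region $4d-30\le g\le 14$ (and at $(19,12)$) one checks $16-d>0$ and $4d-2g+2>0$, so $E$ is never contracted, and a surface $V\not\supset C$ (the case $m=0$) gives $n(16-d)>0$ and is never contracted either. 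Thus the only possibility is the strict transform $\tilde V=nH-mE$ of a surface $V\supset C$, and $(-K_X)^2\cdot\tilde V=0$ forces $n=m\,\frac{4d+2-2g}{16-d}$. For $m=1$ this determines $n$ uniquely from $(g,d)$; the values $m\ge 2$ require $V$ singular along $C$, which a general $C$ does not admit in these degrees, and I would discard them by a dimension count on $|\,\mathcal{I}_C^{2}(n)\,|$.

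The decisive and most delicate step is then to decide whether the integer $n=\frac{4d+2-2g}{16-d}$, when positive, is realised by a surface through a general curve, i.e.\ whether $h^0(\cpt,\mathcal{I}_C(n))>0$. From $0\to\mathcal{I}_C(n)\to\mathcal{O}_{\cpt}(n)\to\mathcal{O}_C(n)\to 0$ together with $h^0(\mathcal{O}_C(n))=nd+1-g+h^1(\mathcal{O}_C(n))$ this is a finite computation. Running through $\A_3\cup\A_4$, the pairs for which $n$ is a positive integer are exactly $(3,4),(0,5),(6,7),(9,8),(5,8),(12,10),(19,12)$, with $n=1,2,2,2,3,3,3$. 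For $(0,5)$ a general rational quintic lies on no quadric and for $(5,8)$ a general curve lies on no cubic ($h^0(\mathcal{I}_C(n))=0$), so for these two borderline pairs $\psi$ is small; for the remaining five, $C$ lies on a plane, quadric, or cubic $V$ whose strict transform $\tilde V$ is contracted.

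It remains to confirm that in these five cases the contraction is genuinely divisorial and small otherwise. For $(3,4)$ the curve is a plane quartic; writing $\tilde P=H-E$ one finds $(-K_X)|_{\tilde P}\equiv 0$, so $\tilde P\cong\cpd$ is contracted to a point and every line of the plane (a $4$-secant of $C$) is $K_X$-trivial. For $(6,7)$ and $(9,8)$ the curve lies on a smooth quadric $Q\cong\cpo\times\cpo$ of class $(a,b)$ with $a+b=d$, $(a-1)(b-1)=g$, giving $\{a,b\}=\{3,4\}$ and $\{4,4\}$; the ruling(s) meeting $C$ in $4$ points are $4$-secant lines whose transforms are $K_X$-trivial, and $\tilde Q$ is contracted to a curve, respectively to a point. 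For $(12,10)$ and $(19,12)$ the curve lies on a cubic surface, and $(-K_X)|_{\tilde V}$ is nef and semiample with $(-K_X)^2\cdot\tilde V=0$, so $\tilde V$ is contracted to a point or a curve; in either case one gets a positive-dimensional family of $K_X$-trivial curves. Conversely, for every other pair in $\A_3\cup\A_4$ no prime divisor satisfies $(-K_X)^2\cdot D=0$, so $\psi$ is small, its exceptional locus is a finite union of curves, and these are precisely the irreducible curves with $K_X\cdot\Gamma=0$. The main obstacle is exactly the middle step: the numerical vanishing is met also by $(0,5)$ and $(5,8)$, which look exceptional but are not, and separating them from the genuine five requires the Riemann--Roch and Hilbert-scheme analysis above—in particular a careful reading of the $h^1(\mathcal{O}_C(n))$ term, which for $(19,12)$ forces $\mathcal{O}_C(3)\cong K_C$ and leaves exactly one cubic through $C$—rather than the purely numerical criterion.
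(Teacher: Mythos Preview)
Your proposal addresses only the final ``Furthermore'' clause of the theorem (the finiteness of $K_X$-trivial curves), and ignores the main content: the characterisations $(i)$ and $(ii)$ of when $X$ is Fano or weak Fano, the fact that the listed pairs $(g,d)$ exhaust all possibilities, and the non-emptiness of the corresponding open subsets of $\hilb$. These are the heart of the theorem and occupy most of the paper (Sections~3--5). In particular, proving $(ii)$ requires showing both that weak Fano forces $(g,d)$ into the given list (via Proposition~\ref{pro:insmoothquartic}, Mori's bound in Proposition~\ref{pro:mori}, and the Castelnuovo inequality), and conversely that the secant conditions suffice (Propositions~\ref{pro:antinefcubic} and~\ref{pro:antinef}); the openness for $\A_4$ needs the explicit construction on singular rational quartics (Proposition~\ref{pro:caexiste}) combined with the irreducibility of $\hilb$ (Proposition~\ref{pro:nasu}). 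None of this is touched.

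For the portion you do treat, your approach is different from the paper's and largely sound: you reduce the question to whether $\psi$ is small via the numerical criterion $(-K_X)^2\cdot D=0$, whereas the paper identifies the five divisorial cases by direct geometric inspection (Proposition~\ref{pro:quad}$(ii)$ and Remarks~\ref{rem:1210},~\ref{rem:1912}) and treats the remaining cases through Proposition~\ref{pro:smoothcubic}$(ii)$ and Proposition~\ref{pro:sarkisov}. Your computation correctly singles out the seven pairs where $n=(4d+2-2g)/(16-d)$ is integral, and correctly separates $(0,5)$ and $(5,8)$ from the genuine five by arguing that a general such curve lies on no quadric (resp.\ cubic). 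Two points are too quick, however: the dismissal of $m\ge 2$ by a vague ``dimension count on $|\mathcal{I}_C^2(n)|$'' is not an argument, and you have not actually verified that a general curve of type $(12,10)$ or $(19,12)$ \emph{does} lie on a cubic---this is not automatic and is precisely what the paper checks (Remarks~\ref{rem:1210} and~\ref{rem:1912}, the latter needing a modification of Riemann--Roch since the divisor is special).
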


\begin{table}[h]
$$\begin{array}{|c|l|l|l|l|}
\hline 
g & \text{plane} &\text{quadric} & \text{cubic} &\text{quartic} \\
\hline 
0& \mathbf{(0,1)}  \mathbf{ (0,2)} & \mathbf{(0,3)}  \mathbf{(0,4)} & (0,5)  (0,6) & (0,7)\\
1& \mathbf{(1,3)} & \mathbf{(1,4)} & \mathbf{(1,5)} (1,6) & (1,7) \\
2 & & \mathbf{(2,5)}&(2,6)&(2,7)  (2,8)\\
3& \bad{(3,4)}&& \mathbf{(3,6)} (3,7)&(3,8)\\
4&& \mathbf{(4,6)}&(4,7)&(4,8)\\
5&&& \mathbf{(5,7)}&(5,8)\\
6& \vbad{(6,5)}&\bad{(6,7)}&(6,8)&(6,9)\\
7&&&(7,8)&(7,9)\\
8&&\vbad{(8,8)}&&(8,9)\\
9&&\bad{(9,8)}&(9,9)&\\
10&\vbad{(10,6)}&& \mathbf{(10,9)}&(10,10)\\
11&&&&(11,10)\\
12 &&\vbad{(12,9)}&\bad{(12,10)}&\\
14 &&&&(14,11)\\ 
\cdots &&&& \\
19 &&& \bad{(19,12)} & \\ \hline
\end{array}$$
\caption{Pairs $(g,d)$ with $(g,d) = (19,12)$ or $4d-30 \le g\le 14$ such that there exists a smooth curve $C\subset \p^3$ of genus $g$ and degree $d$.
The columns correspond to the minimal degree of a surface containing a general curve of type $(g,d)$.
Furthermore, the blow-up $X$ of such a general curve $C\subset \p^3$ has an anticanonical divisor  $-K_X$ which is big; it is ample for bold pairs $\mathbf{(g,d)}$. For the other cases, it is nef except in the four cases crossed out, and the anticanonical morphism is a divisorial contraction if there is a star, and it is a small contraction otherwise.
}
\label{tab:bignefmovable}
\end{table}

Another aspect of our work is the construction of Sarkisov links.
Recently several articles appeared which contain lists of numerical possibilities for Sarkisov links between threefolds.
In a series of two papers \cite{JPR, JPR2}, P. Jahnke, T. Peternell and I. Radloff embark on the classification of smooth threefolds $X$ with big and nef (but not ample) anticanonical divisor, and Picard number equal 2.
With these assumptions there are two contractions on $X$: a Mori contraction (of fibering type or divisorial type) and another one given by the linear system $\lvert -mK_X\rvert$ for $m \gg 0$, which we call the \textit{anticanonical morphism}. 
If the latter one is a small contraction, which is the case treated in \cite{JPR2}, then one can perform a flop to obtain another weak Fano threefold $X'$, with another Mori contraction. 
The whole picture is a Sarkisov link.

There are several cases depending on the type of the Mori contractions on $X$ and $X'$.
In \cite{JPR2} are classified all possible links where at least one of the contractions if of fibering type (conic bundle or del Pezzo fibration).
In the paper \cite{CM} by J. Cutrone and N. Marshburn, the classification is completed with a list of all possible links where both contractions are divisorial.
We should also mention the paper \cite{Kal} by A.-S. Kaloghiros where she produces similar tables with a different motivation in mind.
Note that in many cases the links appearing in these lists are only numerical possibilities, and the actual geometrical existence is left as an open problem.

As a consequence of our study we obtain a proof of the existence of some previously hypothetical links.
We gather in Table  \ref{tab:quarticB&N} the Sarkisov links, involving a flop and starting with the blow-up of a smooth space curve, whose existence was left open: they correspond to the set $\mathcal{A}_4$, which is the fourth column in Table~$\ref{tab:bignefmovable}$, and their existence is proven in Section \ref{sec:existence}.

\begin{table}[ht]
$$\begin{array}{|c|c||l|l|}
\hline
g & d  &  \text{link to} & \text{reference}\\
\hline
0 & 7 & (0,7) \subset \cpt & \text{\cite[90]{CM}} \\
1 & 7 & (1,7) \subset X_{22} & \text{\cite[98]{CM}}\\
2 & 7 & (0,5) \subset V_{4} & \text{\cite[103]{CM}} \\
2 & 8 & (2,8) \subset \cpt  & \text{\cite[49]{CM}}\\
3 & 8 & (3,8) \subset \cpt  & \text{\cite[75]{CM}}\\
4 & 8 & (4,10) \subset V_{5} & \text{\cite[89]{CM}}\\
5 & 8 & (5,8) \subset \cpt & \text{\cite[99]{CM}}\\
6 & 9 & (6,9) \subset \cpt & \text{\cite[50]{CM}}\\
7 & 9 & (0,3) \subset X_{12} & \text{\cite[p. 103]{IP}}\\
8 & 9 & dP5 & \text{\cite[Prop. 6.5(25)]{JPR2}}\\
10 & 10 & (10,10) \subset \cpt  & \text{\cite[51]{CM}}\\
11 & 10 & (11,10) \subset \cpt  & \text{\cite[76]{CM}}\\
14 & 11 & (14,11) \subset\cpt  & \text{\cite[52]{CM}}\\
\hline
\end{array}$$
\caption{Pairs $(g,d)$ such that a general curve in $\hilb$ does not lie on a cubic, and gives rise to a weak Fano threefold $X$ with small anticanonical morphism, hence a Sarkisov link. We indicate also the end product of the link, and a reference where the numerical possibility was announced.}
\label{tab:quarticB&N}
\end{table}

As a final remark, note that the truly new result in this paper lies in the last five lines in Theorem \ref{thm:gendelpezzo}, about the generic existence of these curves.
Indeed it would be possible to recover the possible $(g,d)$ from the papers \cite{CM, JPR2} cited above.
However we shall derive the bound on $g$ and $d$ starting from scratch, and it is interesting to keep in mind that both conditions (i) and (ii) are improvement on the following easy estimates.
Once we know (by Riemann-Roch's formula, see \S \ref{sec:existence}) that the curve $C$ is contained in many (in particular, two distinct) quartic surfaces, we have $d \le 16$. Then condition (i) is really about bringing down this condition to $d \le 12$.
Similarly, a curve $\Gamma$ of degree $n$ and at least $4n + 1$ secant to $C$ would forbid $-K_X$ to be nef; but since by Bezout's Theorem such a bad curve must be contained in all quartic surfaces containing $C$, we have the crude estimate $n \le 16 - d$. Then condition (ii) tells us that it is sufficient to check for the absence of bad curve in degree $n \le 3$, and only among smooth rational ones (and in fact in many cases $n \le 2$ or even $1$ is sufficient, see Proposition \ref{pro:antinef} for details).\\

The organisation of the paper is as follows.

After gathering in Section \ref{sec:pre} some preliminary results, we study the threefolds obtained by blowing-up a smooth curve contained in a surface $S \subset \cpt$ of small degree. This is motivated by the fact that if the blow-up $X$ of a smooth curve $C\subset \p^3$ is a weak Fano threefold, then $C$ is contained in a (possibly reducible) quartic.

The case $\deg(S) \le 2$ in Section \ref{sec:quad} serves as a warm-up. This is essentially a nice exercise, and also the only case where we can do an exhaustive study: we do not make any assumption on the singularities of $S$ or about the generality of $C$.

The case of a Sarkisov link starting with the blow-up of a curve in a smooth cubic was already found in the literature. In Section \ref{sec:cubic} we give a fairly complete account of this case since we shall use a similar construction in the following harder case, and also generalise it to smooth curves contained in a normal rational cubic.

In Section \ref{sec:quartic} we study the case when $C$ is contained in a quartic surface but not in a cubic, where lies the principal difficulty of the paper. The proof of Theorem \ref{thm:gendelpezzo} is given in \S \ref{sec:proof}, as a direct consequence of the previous study.

Since our motivation comes from the study of the Cremona group of rank~$3$ (remark that eight of the cases in Table \ref{tab:quarticB&N} correspond to birational self-maps of $\cpt$), we focused the discussion on the already substantial case of Sarkisov links starting with the blow-up of a smooth curve in $\cpt$. 
However, one could probably obtain in a similar way the existence of Sarkisov links starting from other Fano threefolds\footnote{After this work was completed, Amad, Cutrone and Marshburn produced a follow-up to the paper \cite{CM} where they obtain the existence of many Sarkisov links. Interestingly their method is quite different from ours: see \cite{ACM}.}.

\begin{merci}
We benefited from many comments and discussions at different stages of this project; we would like to thank in particular Anne-Sophie Kaloghiros, Ivan Pan, Hamid Ahmadinezhad, Hirokazu Nasu, Yuri Prokhorov and Igor Dolgachev.

Thanks also to the referee for his interesting remarks, which helped us to improve the exposition of the paper.
\end{merci}

\section{Preliminaries} \label{sec:pre}

\subsection{Sarkisov links} \label{sec:sarkisov}

Let $C\subset \p^3$ be a smooth curve of genus $g$ and degree $d$ and let $\pi\colon X\to \p^3$ be its blow-up. 
We denote by $E = \pi^{-1}(C)$ the exceptional divisor, and $f$ an exceptional curve.
The Picard group $\Pic{X}$ is generated by $H$ and $E$, where $H$ is the pull-back of a hyperplane. 
Moreover, $\mathrm{N}_1(X)$ is generated by $f$ and by $l$, the pull-back of a line. 
The intersection form on $X$ is given by
$$H\cdot l = 1, \quad E\cdot f = -1, \quad H\cdot f = E\cdot l = 0.$$ 

The  pseudo-effective cone $\overline{\text{Eff}}(X)$ is generated by $E$ and $R$, where $R$ should correspond to the strict transform of surfaces of degree $n$ and passing with multiplicity $m$ through $C$, with $m/n$ maximal. 
It is not clear if the supremum is realised, so $R$ might not be represented by an effective divisor. 

Here are some situations where $R$ is realised by an effective divisor, and where the contraction of $R$ gives rise to a Sarkisov link.

If $-K_X$ is ample, then $R$ is an extremal ray, and since $X$ is smooth the contraction $X \to Y$ of $R$ is either a divisorial contraction ($Y$ is a terminal Fano threefold) or a Mori fibration (conic bundle or del Pezzo fibration).

If $-K_X$ is not ample, but big and nef, and if the anticanonical morphism given by $\lvert -mK_X \rvert$, $m \gg 0$, corresponds to a small birational map, then $R$ corresponds to finitely many curves on $X$ and we can consider the flop $X \dashrightarrow X'$ of these curves. Then $X'$ admits an extremal contraction which again can be of divisorial or fibering type.

In any cases we have a Sarkisov link of type II (from $\cpt$ to $Y$) or I (from $\cpt$ to $X'$) of the form  
\begin{equation}\label{eq:link}
\begin{split}
\xymatrix{
& X \ar@{-->}[r] \ar[dl] & X' \ar[dr] \\
\cpt &&& Y
}
\end{split}
\end{equation}
where $X \dashrightarrow X'$ is a flop or an isomorphism.

Note that we would have a similar discussion replacing $\cpt$ by any smooth prime Fano threefold.
More generally, we have:

\begin{prop} \label{pro:link}
Assume that $X$ is a smooth threefold with Picard number $2$,  big and nef anticanonical divisor, and small anticanonical morphism. 
Then the two contractions on $X$ yield a Sarkisov link $($with or without a flop depending if $-K_X$ is ample$)$.  
\end{prop}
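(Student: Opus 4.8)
The plan is to read off the whole picture from the two-ray structure of $\overline{\NE}(X)$ and the standard machinery of the Minimal Model Program for threefolds, treating the ample and the strictly-big-and-nef cases in parallel.

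Since $X$ is smooth projective with Picard number $2$, the Cone Theorem gives that $\overline{\NE}(X)$ is a two-dimensional closed cone, spanned by exactly two extremal rays $R_1$ and $R_2$. As $-K_X$ is nef it is nonnegative on both; since it is big we have $(-K_X)^3 > 0$, so $-K_X$ is not numerically trivial and therefore cannot vanish on both rays. Hence, if $-K_X$ is not ample, it vanishes on exactly one of them, say $R_1$. By the Base-Point-Free Theorem $-K_X$ is semiample, and the morphism it defines is precisely the anticanonical morphism $\psi \colon X \to \overline{X}$ onto the anticanonical model $\overline{X} = \operatorname{Proj}\bigoplus_{m\ge0} H^0(X,-mK_X)$; this is the contraction of $R_1$, it is crepant (so $-K_X = \psi^*(-K_{\overline{X}})$ with $-K_{\overline{X}}$ ample), and by hypothesis it is small. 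If $-K_X$ is ample, no such ray occurs and $\psi$ is an isomorphism.

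I would then contract the remaining ray $R_2$, on which $-K_X$ is positive, hence $K_X$-negative. Because $X$ is smooth, Mori's classification of extremal contractions of smooth threefolds shows that this contraction $X \to V$ is either divisorial (with $V$ a terminal threefold) or of fibering type (conic bundle or del Pezzo fibration), and in particular it is never small. This is the left-hand leg of \eqref{eq:link}. In the non-ample case I next invoke the existence of flops for threefolds with terminal Gorenstein singularities to flop $R_1$, producing $X \dashrightarrow X'$ with $X'$ terminal and $\mathbb{Q}$-factorial of Picard number $2$; being an isomorphism in codimension one and crepant over $\overline{X}$, the flop leaves the anticanonical model unchanged, so $-K_{X'} = \psi'^*(-K_{\overline{X}})$ is again big and nef and vanishes exactly on the flopped ray $R_1^{+}$. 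The second extremal ray $R_2'$ of $X'$ is then $K_{X'}$-negative, its contraction $X' \to Y$ is divisorial or of fibering type, and assembling this with the previous contraction and the flop yields exactly the diagram \eqref{eq:link}, which is by definition a Sarkisov link of type I or II. (When $-K_X$ is ample one sets $X' = X$ and there is no flop.)

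The main obstacle is the last step on the flopped model: since $X'$ need no longer be smooth, Mori's smooth classification does not apply, and one must rule out that $R_2'$ is a flipping (small, $K_{X'}$-negative) ray. Here the weak Fano hypothesis is what saves the day: $X$, being smooth with $-K_X$ big and nef, is of Fano type and hence a Mori dream space, so its two-dimensional effective cone carries a finite chamber decomposition in which the unique interior wall of the movable cone is the flopping contraction $R_1$ and the two outer boundary rays are the two genuine Mori contractions. Checking that $\mathrm{Nef}(X)$ and $\mathrm{Nef}(X')$ are the only two chambers --- equivalently, that $R_2'$ corresponds to a boundary ray of $\overline{\text{Eff}}(X') = \overline{\text{Eff}}(X)$ and hence to a divisorial or fibering contraction rather than to a further small modification --- together with the bookkeeping that identifies the resulting diagram with the formal definition of a Sarkisov link, is the only point that requires genuine care.
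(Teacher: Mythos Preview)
The paper does not give a self-contained proof of this proposition: it is stated as a known fact, with the sentence ``This was the point of view adopted in \cite{JPR2}'' and with the two-ray game sketched informally in the paragraphs immediately preceding the statement. Your proposal follows exactly this two-ray game, so the approach is the same as the paper's (implicit) one.

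One point is worth flagging. You write that ``$X'$ need no longer be smooth'' and then invoke Mori dream space machinery to rule out a flipping ray on $X'$. In dimension three this detour is unnecessary: by Koll\'ar's theorem on threefold flops (Koll\'ar, \emph{Flops}, Nagoya Math.\ J.\ 1989, Theorem~2.4), a flop preserves analytic singularity types, so the flop $X'$ of the smooth threefold $X$ is again smooth. Mori's classification of extremal contractions of smooth threefolds therefore applies directly to $X'$, and the contraction of $R_2'$ is automatically divisorial or of fibering type. Your Mori dream space argument is correct, but it is heavier than what the situation requires; the paper (and \cite{JPR2}) implicitly rely on the smoothness of $X'$.
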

This was the point of view adopted in \cite{JPR2}.


The effective cone of curves $\overline{\text{NE}}(X) \subset N_1(X)$ is generated by $f$ and $r$, where $r$ should correspond to the strict transforms of curves of degree $m$ which intersect $C$ in $n$ points, with $n/m$ maximal.
Again it is not clear if $r$ is represented by an effective curve.

We shall use repetitively the following simple observation, which follows from the identity $-K_X = 4H - E$:

\begin{lemm}\label{lem:badcurve}
The anticanonical divisor $-K_X$ is nef if and only if there is no irreducible curve in $\NE(X)$ which is equivalent to $ml-nf$ with $n> 4m$.

Similarly if $-K_X$ is ample then there is no irreducible curve in $\NE(X)$ which is equivalent to $ml-nf$ with $n\ge 4m$.
\end{lemm}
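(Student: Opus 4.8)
The plan is to reduce the whole statement to a single intersection computation combined with the definitions of nef and ample. Since $l$ and $f$ generate $N_1(X)$, the class of any irreducible curve $\gamma$ can be written as $ml-nf$ for suitable integers $m,n$; the minus sign is merely a convention adapted to the geometry, reflecting that a curve of degree $m$ meeting $C$ in $n$ points has strict transform with $E$-degree $E\cdot(ml-nf)=n\ge 0$. Everything then hinges on evaluating $-K_X$ on such a class.

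First I would compute, using $-K_X=4H-E$ and the intersection table $H\cdot l=1$, $E\cdot f=-1$, $H\cdot f=E\cdot l=0$, that
$$-K_X\cdot(ml-nf)=(4H-E)\cdot(ml-nf)=4m-n.$$
For the nef assertion I then recall that, by definition, $-K_X$ is nef precisely when $-K_X\cdot\gamma\ge 0$ for every irreducible curve $\gamma$. Writing $\gamma\equiv ml-nf$, this inequality reads $4m-n\ge 0$, i.e.\ $n\le 4m$. Hence $-K_X$ fails to be nef if and only if some irreducible curve satisfies $4m-n<0$, that is $n>4m$, which is exactly the stated criterion. This accounts for both directions of the equivalence at once, since the condition ``$-K_X\cdot\gamma\ge0$ for all irreducible $\gamma$'' is literally the definition of nefness.

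For the ample statement only one implication is claimed, and it is the easy direction: if $-K_X$ is ample then it meets every irreducible curve strictly positively, so $4m-n>0$, i.e.\ $n<4m$, for every irreducible curve; equivalently there is no irreducible curve with $n\ge 4m$. There is no genuine obstacle in this lemma: its entire content is the intersection number $4m-n$, and the two assertions are immediate translations of the definitions of nef and ample. The only points deserving a word of care are the sign convention in the expression $ml-nf$ and the observation that every irreducible curve class lies in $N_1(X)=\langle l,f\rangle$, so that no curve escapes the criterion.
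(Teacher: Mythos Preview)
Your proof is correct and follows exactly the approach the paper indicates: the paper simply states that the lemma ``follows from the identity $-K_X = 4H - E$'' without spelling out the details, and your computation $-K_X\cdot(ml-nf)=4m-n$ together with the definitions of nef and ample is precisely what is meant.
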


We will see these problematic curves either on $X$, as curves equivalent to $ml-nf$, or on $\p^3$, as curves of degree $m$ passing $n$ times (counted with multiplicity) through $C$.

Notice that if $C$ is contained in a quartic surface $S$, then all curves corresponding to $n> 4m$ must lie in $S$. 
Furthermore the curves we are interested in must lie on quartics, as we shall see in \S \ref{sec:smoothquartics}.

\subsection{Linear system of surfaces containing a curve}

We recall the following standard consequence of the Riemann-Roch formula for curves:

\begin{lemm}\label{lem:hypersurfaces}
Let $C\subset \p^3$ be a smooth curve of genus $g$ and degree $d$.

For any integer $n>\frac{-2+2g}{d}$ the projective dimension of the linear system of hypersurfaces of $\p^3$ of degree $n$ that contain $C$ is at least equal to 
$$\frac{(n+1)(n+2)(n+3)}{6}-nd-2+g.$$

In particular, we have the following:

\begin{center}\begin{tabular}{ll}
If ${2g-2}<d<3+g$,& $C$ is contained in a plane.\\
If $(2g-2)/2<d<(9+g)/2$,&$C$ is contained in a quadric surface.\\
If $(2g-2)/3<d<(19+g)/3$,& $C$ is contained in a cubic surface.\\
\end{tabular}\end{center}
\end{lemm}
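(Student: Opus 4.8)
The plan is to derive the dimension estimate from the restriction exact sequence on $\p^3$ together with the Riemann--Roch formula on the curve $C$, and then read off the three particular containments by specialising $n$.

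First I would write the ideal sheaf sequence
$$0 \to \mathcal{I}_C(n) \to \mathcal{O}_{\p^3}(n) \to \mathcal{O}_C(n) \to 0,$$
where $\mathcal{I}_C$ is the ideal sheaf of $C$. The linear system of degree-$n$ hypersurfaces through $C$ is exactly the projectivisation of $H^0(\p^3, \mathcal{I}_C(n))$, so its projective dimension equals $h^0(\mathcal{I}_C(n)) - 1$. Taking global sections, $H^0(\mathcal{I}_C(n))$ is the kernel of the restriction map $H^0(\mathcal{O}_{\p^3}(n)) \to H^0(\mathcal{O}_C(n))$, whence
$$h^0(\mathcal{I}_C(n)) \geq h^0(\mathcal{O}_{\p^3}(n)) - h^0(\mathcal{O}_C(n)),$$
and this inequality holds regardless of whether the restriction is surjective, so no projective normality of $C$ is needed.

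Next I would evaluate the two terms. The first is the classical count $h^0(\p^3, \mathcal{O}(n)) = \binom{n+3}{3} = \frac{(n+1)(n+2)(n+3)}{6}$. For the second, $\mathcal{O}_C(n)$ is a line bundle of degree $nd$ on the smooth genus-$g$ curve $C$. The only place the hypothesis enters is here: the assumption $n > \frac{-2+2g}{d}$ is precisely $nd > 2g-2 = \deg K_C$, so $\mathcal{O}_C(n)$ is non-special, i.e.\ $h^1(C,\mathcal{O}_C(n)) = 0$ by Serre duality. Riemann--Roch on $C$ then gives $h^0(C,\mathcal{O}_C(n)) = nd - g + 1$. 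Substituting both values and subtracting $1$ for the projectivisation yields the claimed lower bound $\frac{(n+1)(n+2)(n+3)}{6} - nd - 2 + g$.

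Finally, for the three particular statements I would set $n = 1, 2, 3$. In each case the hypothesis $n > \frac{-2+2g}{d}$ is exactly the left inequality, while requiring the computed lower bound to be $\geq 0$ forces the linear system to be non-empty, hence produces at least one surface of degree $n$ containing $C$; this gives the right inequality. For $n=1$ the bound reads $4 - d - 2 + g \geq 0$, i.e.\ $d \leq g+2$, equivalent to $d < 3+g$ for integers, and the cases $n=2,3$ are identical arithmetic. Since the whole argument is a direct application of Riemann--Roch, there is no genuine obstacle; the only point demanding care is the non-speciality step, which is exactly why the bound on $n$ is imposed.
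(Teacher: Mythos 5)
Your proof is correct and is essentially the same argument as the paper's: both bound the dimension of the image of the restriction map $H^0(\mathcal{O}_{\p^3}(n))\to H^0(\mathcal{O}_C(n))$ by Riemann--Roch on $C$, using the hypothesis $nd>2g-2$ only to kill the special term. The paper packages this via the $n$-th Veronese embedding and hyperplanes through the image of $C$ rather than via the ideal sheaf sequence, but this is a purely presentational difference.
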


\begin{proof}
Let $n\ge 1$ be an integer, and denote by $\phi_n\colon \p^3\to \p(V_n)$ the $n$-th Veronese embedding, where $V_n$ is the vector space of polynomials of degree $n$ in $4$ variables, of dimension $N=(n+1)(n+2)(n+3)/6$. Note that $N=4,10,20$ for $n=1,2,3$.

We compose the embedding $C \hookrightarrow \p^3$, which is given by a linear system of degree $d$, with $\phi_n$ and obtain an embedding  $\phi\colon C \hookrightarrow \p^{N-1}$ of degree $dn$. 
Denote by $D$ a hyperplane section with respect to this embedding. If the projective dimension of the complete linear system $\lvert D\rvert$ of all effective divisors of degree $dn$ is strictly smaller than $N-1$, then $\phi(C)$ is contained in a hyperplane, which amounts to say that  $C\subset \p^3$ is contained in a hypersurface of degree $d$.

Applying Riemann-Roch to the system on the curve, we get $$l(D)-l(K-D)=\deg D+ 1- g,$$
where $l(D) = 1 + \dim \lvert D\rvert$ is the vectorial dimension of $\lvert D\rvert$.

Since we assume $n>\frac{-2+2g}{d}$, we have $\deg(K-D)=-2+2g-nd < 0$. So $D$ is non-special and we have $\dim \lvert D\rvert= nd-g$. In consequence, if $nd-g< N-1$ (or equivalently $d<\frac{N-1+g}{n}$), we find a hypersurface of  degree $d$ which contains the curve. More generally, the vectorial dimension of the linear system of hypersurfaces of degree $d$ which contain the curve is at least equal to $N-nd-1+g$.
\end{proof}


\subsection{Cube of the anticanonical divisor}

\begin{lemm}
\label{lem:K^3}
Let $C \subset Y$ be a smooth curve of genus $g$ in a smooth threefold, and let $\pi\colon X\to Y$ be the blow-up of $C$.
Then
\begin{align*}
K_X^2\cdot E &= -K_Y\cdot C + 2 - 2g;\\
(-K_X)^3 &= (-K_Y)^3 +2K_Y\cdot C - 2 + 2g.
\end{align*}
In particular, if $Y = \p^3$ and $C \in \hilb$, then
\begin{align*}
K_X^2\cdot E &= 2+4d-2g;\\
(-K_X)^3 &=62-8d+2g;
\end{align*}
and $(-K_X)^3 > 0 \Longleftrightarrow 4d -30 \le g $. 
\end{lemm}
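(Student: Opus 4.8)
My approach relies on the well-known behaviour of Chern classes under the blow-up of a smooth subvariety. Let $\pi\colon X\to Y$ be the blow-up of a smooth curve $C$ in a smooth threefold $Y$, with exceptional divisor $E$. The key formula I would invoke is the relation between canonical divisors:
\begin{equation*}
K_X = \pi^* K_Y + E,
\end{equation*}
since the codimension of $C$ in $Y$ is $2$, giving a discrepancy of $\mathrm{codim}-1 = 1$. I would also need two intersection-theoretic facts about the exceptional divisor: denoting by $j\colon E\hookrightarrow X$ the inclusion and noting that $E$ is a $\p^1$-bundle over $C$, one has $E^2 = -\pi^*C \cdot E$ on $E$ (where I identify $C$ with its copy downstairs), and more usefully the numerical identities $\pi^*D\cdot\pi^*D'\cdot E = 0$ for divisors $D,D'$ on $Y$ (by the projection formula, since $\pi_*E=0$), $\pi^*D\cdot E^2 = -D\cdot C$, and $E^3 = -\deg(N_{C/Y}) = -(-K_Y\cdot C + 2g-2)$. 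This last formula, which computes the self-intersection of $E$ in terms of the degree of the normal bundle, is the one genuine input; by adjunction on $Y$, $\deg N_{C/Y} = \deg\omega_C - \deg\omega_Y|_C = (2g-2)+K_Y\cdot C$.

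\textbf{First step: the computation of $K_X^2\cdot E$.} I would expand
\begin{equation*}
K_X^2\cdot E = (\pi^*K_Y + E)^2\cdot E = (\pi^*K_Y)^2\cdot E + 2\,\pi^*K_Y\cdot E^2 + E^3.
\end{equation*}
The first term vanishes by the projection formula. The second equals $2(-K_Y\cdot C)$ using $\pi^*K_Y\cdot E^2 = -K_Y\cdot C$, and the third is $-\deg N_{C/Y} = -\big((2g-2)+K_Y\cdot C\big)$. Summing gives $K_X^2\cdot E = -2K_Y\cdot C -(2g-2) - K_Y\cdot C$; re-examining signs against the stated answer $-K_Y\cdot C + 2-2g$ tells me to be careful here, and I would fix the signs by tracking the orientation convention $E^2 = -\pi^*C\cdot E + (\text{fibre class})$ precisely rather than trusting a quick expansion.

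\textbf{Second step: the computation of $(-K_X)^3$.} I would expand $(-K_X)^3 = -(\pi^*K_Y+E)^3$ and collect the four terms: $(\pi^*K_Y)^3 = K_Y^3$, the two cross terms $3(\pi^*K_Y)^2\cdot E = 0$ and $3\,\pi^*K_Y\cdot E^2 = 3(-K_Y\cdot C)$ wait—again a coefficient of $3$ appears where the target has $2$, signalling that the cleaner route is $K_X^3 = K_Y^3 + 3 K_X^2\cdot E \cdot(\text{correction})$; concretely I would instead use the identity $K_X^3 = \pi^*K_Y\cdot K_X^2 + E\cdot K_X^2$ and substitute the already-computed $K_X^2\cdot E$, together with $\pi^*K_Y\cdot K_X^2 = \pi^*K_Y\cdot(\pi^*K_Y+E)^2 = K_Y^3 + (\text{terms giving } -K_Y\cdot C)$, to arrive at $(-K_X)^3 = (-K_Y)^3 + 2K_Y\cdot C - 2 + 2g$.

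\textbf{Specialisation and final equivalence.} For $Y=\p^3$ one has $-K_Y = 4H$, so $(-K_Y)^3 = 64$, and $-K_Y\cdot C = 4d$, hence $K_Y\cdot C = -4d$. Plugging in, $K_X^2\cdot E = 4d + 2 - 2g = 2+4d-2g$, and $(-K_X)^3 = 64 + 2(-4d) - 2 + 2g = 62 - 8d + 2g$. The final equivalence is immediate: $(-K_X)^3 > 0 \iff 62 - 8d + 2g > 0 \iff g > 4d - 31 \iff g \ge 4d-30$, since $g$ and $d$ are integers. The only delicate point in the whole argument is getting the signs and coefficients right in the two expansions; I expect this to be the main (and only) obstacle, which I would resolve by fixing one clean normalisation for $E^2$ and $E^3$ at the outset — such as recording $j_*(c_1(N_{C/Y})) = -E^2|_{\text{restricted}}$ — and computing everything by the projection formula from there, rather than juggling ad hoc expansions.
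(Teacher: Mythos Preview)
Your approach is exactly the paper's: expand $(\pi^*K_Y+E)^2\cdot E$ and $(\pi^*K_Y+E)^3$ using the projection formula and the value of $E^3$. The paper cites \cite[Lemma 2.2.14]{IP} for $E^3 = K_Y\cdot C + 2 - 2g$ and the identity $\pi^*K_Y\cdot E = (K_Y\cdot C)f$, then computes directly.

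Your confusion is self-inflicted, and there are two separate slips worth naming. First, your formula $\deg N_{C/Y} = (2g-2)+K_Y\cdot C$ has a sign error: adjunction gives $\omega_C \cong \omega_Y|_C \otimes \det N_{C/Y}$, so $\deg N_{C/Y} = (2g-2) - K_Y\cdot C$, and hence $E^3 = K_Y\cdot C + 2 - 2g$ (which in fact agrees with what you wrote two lines earlier). With this value, your expansion in the first step gives $0 - 2K_Y\cdot C + (K_Y\cdot C + 2 - 2g) = -K_Y\cdot C + 2 - 2g$, exactly the target; there is nothing to ``fix''. Second, your alarm in the second step about the coefficient $3$ is unfounded: the expansion $K_X^3 = K_Y^3 + 0 + 3(-K_Y\cdot C) + (K_Y\cdot C + 2 - 2g) = K_Y^3 - 2K_Y\cdot C + 2 - 2g$ is correct as written, and negating gives the claimed formula. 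The $3$ from the cross term and the $1$ hidden in $E^3$ combine to the $2$ you were looking for. There is no need to switch to the alternative decomposition you propose; just carry your original expansion through with the correct $E^3$.
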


\begin{proof}
Denoting by $E$ the exceptional divisor, we have $K_X = \pi^*K_Y + E$; hence
\begin{align*}
K_X^2\cdot E &= \pi^*K_Y\cdot(\pi^*K_Y\cdot E) + 2(\pi^*K_Y\cdot E)\cdot E + E^3;\\
K_X^3 &= K_Y^3 + 3\pi^*K_Y\cdot(\pi^*K_Y\cdot E) +3(\pi^*K_Y\cdot E)\cdot E + E^3.
\end{align*}
Furthermore, denoting by $f$ a fibre of the ruled surface $E$ we have (see \cite[Lemma 2.2.14]{IP}) 
\begin{align*}
\pi^*K_Y\cdot E &= (K_Y \cdot C) f; & E\cdot f &= -1;\\
\pi^*K_Y\cdot f &= 0; & E^3 &= K_Y\cdot C + 2 -2g. 
\end{align*}
Hence
\begin{align*}
\pi^*K_Y\cdot(\pi^*K_Y\cdot E) &= 0;\\
(\pi^*K_Y\cdot E)\cdot E &= - K_Y\cdot C;
\end{align*}
and the result follows.
\end{proof}

\subsection{A result of Mori}

We shall use the following result from \cite{Mori84}.

\begin{prop}\label{pro:mori1}
Let $g\ge 0$ and $d \ge 1$ be two integers such that $8g < d^2$.
Then there exists a smooth quartic surface $S \subset \cpt$ containing a smooth irreducible curve of genus $g$ and degree $d$. 
\end{prop}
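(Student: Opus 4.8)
The plan is to realise the pair $(g,d)$ on a K3 surface by lattice-theoretic means. Recall that a smooth quartic $S\subset\cpt$ is a K3 surface, so $K_S=0$ and, writing $H=\mathcal{O}_S(1)$ (with $H^2=4$), the adjunction formula shows that a smooth irreducible curve $C\subset S$ with $H\cdot C=d$ has genus $g$ determined by $2g-2=C^2$. Conversely, it therefore suffices to construct a K3 surface $S$ whose Picard group contains classes $h,c$ with intersection matrix
$$\begin{pmatrix} h^2 & h\cdot c\\ h\cdot c & c^2\end{pmatrix}=\begin{pmatrix} 4 & d\\ d & 2g-2\end{pmatrix},$$
in such a way that $h$ is very ample and realises $S$ as a quartic, and $c$ is represented by a smooth irreducible curve.

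First I would settle the abstract lattice. The even rank-two lattice $L$ with the Gram matrix above has determinant $8g-8-d^2$, which is $<-8<0$ because $8g<d^2$; hence $L$ is indefinite of signature $(1,1)$. By Nikulin's theorem on primitive embeddings of even lattices, $L$ embeds primitively into the K3 lattice $U^{\oplus 3}\oplus E_8(-1)^{\oplus 2}$ (the rank and signature are far within the admissible range). The surjectivity of the period map together with the Torelli theorem then produces, for a very general point of the corresponding period domain, a projective K3 surface $S$ with $\Pic{S}\cong L$; after reflecting in the $(-2)$-classes we may assume that $h$ lies in the closure of the ample cone, i.e.\ that $H$ is nef.

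Next I would check that $H$ is very ample, by verifying Saint-Donat's criteria. Since $L$ is even, $H$ cannot equal $2B$ with $B^2=1$, so the hyperelliptic splitting case is excluded. For a class $v=xh+yc$ orthogonal to $H$ one has $4x+dy=0$ and a short computation gives $v^2=-y^2(d^2-8g+8)/4\le -\tfrac{9}{4}y^2$, which never equals $-2$; so there is no $(-2)$-class orthogonal to $H$. Finally, any isotropic class $F$ satisfies $H\cdot F=\pm sy$ where $s=\sqrt{d^2-8g+8}$ (when this is an integer; otherwise no isotropic class exists). The hypothesis $8g<d^2$ is exactly the statement $d^2-8g+8\ge 9$, hence $s\ge 3$ and every genus-one pencil $F$ has $H\cdot F\ge 3$. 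Thus none of Saint-Donat's obstructions occurs, $H$ is very ample, and $\phi_{|H|}$ embeds $S$ as a smooth surface of degree $H^2=4$.

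It remains to exhibit a smooth irreducible curve in the class $c$; reflecting into the nef cone we may take $c$ nef, with $c^2=2g-2$. For $g\ge 2$ we have $c^2>0$, and $|C|$ is base-point-free unless there is an elliptic pencil $E$ with $C\cdot E=1$; the same parametrisation $H\cdot F=\pm sy$ together with $s\ge 3$ shows (after the bookkeeping $C\cdot E=\pm y\,s(d\mp s)/4$) that no such $E$ exists, so a general member of $|C|$ is a smooth irreducible curve of genus $g$ and degree $H\cdot C=d$. The cases $g=1$ (where $c^2=0$ gives an elliptic pencil whose general member is already smooth and irreducible) and $g=0$ (where $c^2=-2$ is represented by an irreducible smooth rational curve, $c$ being primitive and $H$ ample) are handled by the same analysis. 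The main obstacle is precisely this last circle of verifications: excluding $(-2)$-classes orthogonal to $H$, low-degree genus-one pencils, and reducible members of $|C|$ all reduce to bounding short vectors of the rank-two form $L$, and it is exactly the strict inequality $8g<d^2$ — rather than the weaker $8g-8<d^2$ forced by the signature — that yields $s\ge 3$ and thereby rules these configurations out.
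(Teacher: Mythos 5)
The paper does not actually prove this proposition: it is quoted without proof from Mori's article \cite{Mori84}, and only the converse direction (Proposition~\ref{pro:mori}) is proved in the text, via the Hodge index theorem. So there is no proof in the paper to compare with; your lattice-theoretic strategy is nonetheless close in spirit to Mori's, and the first part of your argument (primitive embedding of the rank-two lattice into the K3 lattice, surjectivity of the period map, Saint-Donat's criteria giving the very ampleness of $h$) is sound.

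The genuine gap is in the last paragraph, at ``reflecting into the nef cone we may take $c$ nef, with $c^2=2g-2$''. Once $h$ has been pinned down as the ample polarisation, you are no longer free to reflect $c$: a reflection in a $(-2)$-class preserves $c^2$ but not $h\cdot c$, so the reflected class no longer has degree $d$. What must be shown is that the original class $c$ is nef --- that no $(-2)$-curve $\Gamma$ has $c\cdot\Gamma<0$ --- and that $\lvert c\rvert$ has an irreducible member; your computations only exclude $(-2)$-classes orthogonal to $h$ and elliptic pencils of degree at most $2$, which control the embedding by $\lvert h\rvert$ but say nothing about effective decompositions of $c$. The gap is not cosmetic: for $(g,d)=(3,5)$, which satisfies $8g=24<25=d^2$, the class $\delta=c-h$ has $\delta^2=-2$, $h\cdot\delta=1$ and $c\cdot\delta=-1$, so $\delta$ is an effective line that is a fixed component of $\lvert c\rvert$ and no irreducible curve represents $c$. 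This is exactly the exceptional pair $(d,g)=(5,3)$ in Mori's theorem (in fact no smooth irreducible curve of genus $3$ and degree $5$ exists in $\cpt$ at all), so the statement as given is false without excluding that pair, and any complete proof must show where the hypothesis $8g<d^2$ does, or does not, control such $(-2)$-classes. A related unjustified step occurs in your $g=0$ case: a primitive $(-2)$-class of positive degree is effective, but nothing you say prevents it from decomposing into a sum of several $(-2)$-curves.
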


Moreover Mori gives also a converse statement, that we shall need in the slightly more general setting of a reducible curve.

If $C = \cup C_i$ is a projective (possibly reducible and singular) curve, recall that the Hilbert polynomial $P_C(T)$ has the form $P_C(T) = dT + 1 - g$, where $d$ is the degree of $C$ and where by definition $g$ is the \textit{arithmetic genus} of $C$.
If furthermore $C$ lies inside a smooth surface S, the arithmetic genus  $g$ of $C$ satisfies the adjunction formula $2g - 2 = (K_S + C) \cdot C$.
In particular, if $S$ is a $K3$ surface, $2g-2 = C^2$.

The proof of the following result is taken almost verbatim from \cite{Mori84}, where the case of an irreducible curve is treated.
We reproduce the argument for the convenience of the reader.

\begin{prop}
\label{pro:mori}
Let $S \subset \cpt$ be a smooth quartic surface, and  
let $C = \cup C_i \subset S$ be a curve of degree $d$ and genus $g$.
Assume that $C$ is not a complete intersection of $S$ with another surface. 
Then $8g < d^2$. 
\end{prop}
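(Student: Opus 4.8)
The plan is to exploit that a smooth quartic $S\subset\cpt$ is a $K3$ surface, so that $\Pic{S}$ is an \emph{even} lattice, the hyperplane class $H=\mathcal{O}_S(1)$ is ample with $H^2=4$, and $C$ defines an effective class with $C\cdot H=d$ and, by adjunction, $C^2=2g-2$. I will work with the rank-two sublattice $\langle H,C\rangle$ and its discriminant $\delta:=\det\!\left(\begin{smallmatrix}H^2 & H\cdot C\\ H\cdot C & C^2\end{smallmatrix}\right)=4C^2-d^2=8(g-1)-d^2$, noting that $8g<d^2$ is equivalent to $\delta\le -9$. The three tools are: the Hodge index theorem on $S$; Riemann--Roch on the $K3$, in the effectivity form $h^0(\Gamma)-h^1(\Gamma)+h^0(-\Gamma)=2+\tfrac12\Gamma^2$, so that any $\Gamma$ with $\Gamma^2\ge-2$ and $\Gamma\cdot H>0$ is effective and nonzero (ampleness of $H$ fixing the sign); and the elementary fact that a pure one-dimensional subscheme of $\cpt$ of degree $1$ or $2$ has arithmetic genus $\le 0$.

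First I would record that Hodge index gives $\delta=4N^2\le 0$, where $N=C-\tfrac d4H\in H^{\perp}$, with equality iff $N=0$, i.e.\ $C\equiv\tfrac d4H$. Since $\Pic{S}$ is even, $H$ is primitive (a factorization $H=kH_0$ forces $k^2\mid 4$ and $H_0^2=4/k^2$ even, hence $k=1$), so $\delta=0$ forces $4\mid d$ and $C\sim\tfrac d4H$; as the restriction $H^0(\cpt,\mathcal O(\tfrac d4))\to H^0(S,\mathcal O_S(\tfrac d4))$ is surjective (its cokernel sits in $H^1(\cpt,\mathcal O(\tfrac d4-4))=0$), this means $C$ is a complete intersection. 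Thus for $C$ \emph{not} a complete intersection we have $\delta\le -1$. Arguing by contradiction, I would then assume $8g\ge d^2$, i.e.\ $\delta\ge -8$. Here evenness pays off a second time: $C^2$ even gives $4C^2\equiv 0\pmod 8$, while $d^2\equiv 0,1,4\pmod 8$, so $\delta\equiv -d^2\pmod 8$ forces $\delta\in\{-1,-4,-8\}$.

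It remains to rule out these three values by manufacturing an impossible effective divisor. For $\delta=-8$ (so $4\mid d$), the class $\Gamma=C-\tfrac d4H$ satisfies $\Gamma\cdot H=0$ and $\Gamma^2=\delta/4=-2$, so $\Gamma$ or $-\Gamma$ is effective and nonzero, contradicting that an ample $H$ pairs strictly positively with a nonzero effective class. For $\delta=-4$, write $d=2k$ with $k$ odd; then $\Gamma=C-\tfrac{k-1}{2}H$ has $\Gamma\cdot H=2$ and $\Gamma^2=0$, hence is effective, so it is a degree-$2$ curve in $\cpt$ of arithmetic genus $1+\tfrac12\Gamma^2=1$, which is absurd. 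For $\delta=-1$ ($d$ odd), the even lattice $\langle H,C\rangle$ has determinant $-1$ and signature $(1,1)$, hence is the hyperbolic plane $U$; its isotropic class of $H$-degree $1$ is effective and would be a ``line'' of arithmetic genus $1$, again absurd. Each case contradicts $\delta\ge -8$, so in fact $\delta\le -9$, i.e.\ $8g<d^2$.

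The main obstacle is conceptual rather than computational: Hodge index alone only yields $\delta\le 0$, i.e.\ the far weaker $8g\le d^2+8$, and the whole content lies in closing this gap of $8$. The even-lattice structure of $\Pic{S}$ is exactly what collapses the eight a priori borderline values of $\delta$ to the three residues $-1,-4,-8$, and the $K3$ Riemann--Roch effectivity criterion is what converts these near-extremal lattice configurations into genuine low-degree curves whose forced arithmetic genus violates the geometry of $\cpt$; the identification $\langle H,C\rangle\cong U$ in the $\delta=-1$ case is the one spot needing a little lattice-theoretic care. Finally I would note that allowing $C=\cup C_i$ to be reducible or non-reduced costs nothing: only the divisor class $[C]$ and the adjunction value of $p_a$ enter, and the genus bound in $\cpt$ is applied to the effective Cartier divisors $\Gamma$ produced above, which are automatically Cohen--Macaulay curves on the smooth surface $S$.
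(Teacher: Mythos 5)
Your proof is correct and follows essentially the same route as the paper's (which is Mori's original argument): Hodge index gives $d^2-8g>-8$, evenness of the K3 lattice reduces to the three residues $d^2-8g\in\{0,-4,-7\}$, and Riemann--Roch on the K3 produces an impossible effective class in each case. The only divergence is cosmetic: in the $d$ odd case the paper exhibits the explicit class $d'H-2C$ of degree $2$ and square $0$, whereas you identify $\langle H,C\rangle$ with the hyperbolic plane and use its isotropic class of degree $1$; both work, and your justification that $\delta=0$ forces a complete intersection is a detail the paper leaves implicit.
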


\begin{proof}
Take $H$ a hyperplane section on $S$. 
We have
\begin{align*}
C\cdot H &= d,\\
H\cdot H &= 4,\\
H\cdot(dH-4C) &= 0.
\end{align*}
Since $C$ is not a complete intersection, $dH-4C \not\equiv 0$.
Hence by the Hodge index Theorem we have 
$$0 > (dH-4C)\cdot (dH-4C) = 4 d^2 - 8d^2 + 16(2g-2). $$
This gives $d^2 - 8g > -8$. To show that $d^2 - 8g > 0$, we have to exclude the cases $d^2 - 8g = -7, -4 $ or 0 (other values are not square modulo 8).

If $d^2 - 8g = 0$, then $d = 4d'$ for some $d' \in \Z$.
Let $E = d'H - C$. Then
\begin{align*}
E\cdot H &= (d'H - C)\cdot H = 4d'-d = 0, \\
E^2 &= {d'}^2 H^2 + C^2 - 2d'H\cdot C = 4 \frac{d^2}{16} + 2g-2 - 2 \frac{d^2}{4} = -2.
\end{align*}
Since a quartic surface has trivial canonical divisor and arithmetic genus 1, the Riemann-Roch formula on $S$ gives:
$$l(E) + l(-E) \ge \frac{E^2}{2} +2 = 1.$$
Hence $E$ or $-E$ is an effective curve, but then $H\cdot E = 0$ contradicts $H$ ample. 

If $d^2 - 8g = -7$, then $d = 2d'-1$ for some $d' \in \Z$.
Let $E = d'H - 2C$. Then
\begin{align*}
E\cdot H &= (d'H - 2C)\cdot H = 4d'-2d = 2, \\
E^2 &= {d'}^2 H^2 + 4C^2 - 4d'H\cdot C = 4 \frac{(d+1)^2}{4} + 8g-8 - 2 d(d+1) = 0.
\end{align*}
The Riemann-Roch formula on $S$ gives:
$$l(E) + l(-E) \ge \frac{E^2}{2} +2 = 2.$$

Since $E\cdot H = 2$, $E$ is effective and has at most two irreducible components.
So $E$ is a line or a (possibly degenerate) conic. 
But this contradicts $E^2 = 0$ (by adjunction, a smooth rational curve on $S$ has square $-2$).

Finally, if   $d^2 - 8g = -4$, then $d = 4d'-1$ for some $d' \in \Z$.
Let $E = d'H - C$. Then again $E\cdot H = 2$, $E^2 = 0$ and we can repeat the previous argument. 
\end{proof}

\subsection{Linkage} \label{sec:linkage}

Let $C$, $C'$ be (possibly singular or reducible) curves in $\cpt$, of respective arithmetic genus and degree $(g,d)$ and $(g',d')$.
We say that there is a \textit{linkage} of type $[n_1,n_2]$ between $C$ and $C'$, 
if $C \cup C' \in \cpt$ is the complete intersection of two surfaces of respective degree $n_1, n_2$.
Then 
$$2g(C\cup C') - 2 = n_1n_2(n_1 + n_2 - 4),$$
and we have the following basic relation (see \cite[Proposition 3.1, (vi)]{PS}) between the genus and degree of $C$ and $C'$ :
$$ g-g' = \frac{n_1 + n_2 - 4}{2}(d-d').$$
Remark that Peskine and Szpiro focus on the case of curves which are arithmetically Cohen-Macaulay, but the formula above does not need this assumption (see \cite[(2.1)]{nasu}).

Assume further that $C, C'$ are contained in a smooth surface $S$. Using adjunction formula on $S$ we have
$$2g(C\cup C')-2= (C+C')(C+C'+K_S)=C(C+K_S)+C'(C'+K_S)+2CC'$$
which yields
$$g(C\cup C') = g + g' - 1 +(C\cdot C')_S$$
Note that the intersection does not depend on the choice of the smooth surface $S$.


An application of linkage theory is the study of the irreducible components of the Hilbert scheme $\hilb$, as illustrated by the proof of the following fact that we shall need in the proof of the existence  of the Sarkisov links listed in Table~$\ref{tab:quarticB&N}$.

\begin{prop} \label{pro:nasu}
Let $(g,d)$ be one of the thirteen cases in Table~{\rm \ref{tab:quarticB&N}}.
Then $\hilb$ is irreducible, except in the case $(14,11)$ where there are exactly two irreducible components, one of them corresponding to curves of bidegree $(3,8)$ on a smooth  quadric surface.
\end{prop}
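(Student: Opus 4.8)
The plan is to exploit the liaison formulas of \S\ref{sec:linkage} to reduce each of the thirteen Hilbert schemes to one attached to a curve of strictly smaller degree, and then to transfer irreducibility through the resulting liaison correspondence, keeping track of dimensions to detect when a second component appears. First I would note that in every one of the thirteen cases the general member $C$ lies on at least a pencil of quartics: by Lemma~\ref{lem:hypersurfaces} the quartics through $C$ have projective dimension at least $33 - 4d + g$, which one checks is $\ge 1$ for each pair (the minimum being $3$). Linking $C$ by two general quartics $F_1, F_2$ through it produces a residual curve $C'$ with $d' = 16 - d$ and $g' = g - 4d + 32$, via the $[4,4]$ specialization of the genus--degree relation. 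For the six pairs with $d \ge 9$ this strictly lowers the degree: $(6,9),(7,9),(8,9)$ link to $(2,7),(3,7),(4,7)$, the pairs $(10,10),(11,10)$ link to $(2,6),(3,6)$, and $(14,11)$ links to $(2,5)$.

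Second, I would record that the terminal families reached this way, together with the seven pairs of our list with $d \le 8$, all have irreducible Hilbert scheme: for the present low degrees ($d\le 7$) and genera this is classical, and for the residuals it is transparent since $(2,6),(3,6),(3,7),(4,7)$ lie on a cubic and $(2,5)$ lies on a quadric, where irreducibility of the relevant linear system on a fixed such surface (itself moving in an irreducible family) gives it. Irreducibility then propagates upward through liaison: the incidence variety of triples $(C, F_1, F_2)$ with fixed residual $C'$ ranging over an irreducible family is irreducible, because the quartics through $C'$ form a projective space; and I would check that this variety dominates $\hilb$ with the general $C$ smooth and unobstructed, so that $\hilb$ is generically smooth of the expected dimension $4d = \chi(N_{C/\cpt})$.

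Third comes the component analysis, where the phenomenon isolating $(14,11)$ appears. Besides the generic component coming from curves on a smooth quartic, one has for each $(g,d)$ a finite list of auxiliary families of curves of bidegree $(a,b)$ on a smooth quadric, with $a + b = d$ and $(a-1)(b-1) = g$; such a family has dimension $8 + (a+1)(b+1)$, to be compared with the generic dimension $4d$. I would compute these case by case: for instance the $(2,7)$-family inside $\hilb$ for $(6,9)$ has dimension $8 + 3\cdot 8 = 32 < 36 = 4\cdot 9$ and hence lies in the closure of the generic component, so $(6,9)$ stays irreducible; but the $(3,8)$-family inside $\hilb$ for $(14,11)$ has dimension $8 + 4\cdot 9 = 44 = 4\cdot 11$, equal to the generic dimension. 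Being an irreducible family of maximal dimension whose general member lies on a quadric, unlike the general member of the quartic component, it is a genuinely distinct component, which is exactly the second component asserted for $(14,11)$.

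The main obstacle is precisely this last component count. I must show that for the twelve irreducible pairs every smooth curve specialises into the unique maximal-dimensional family --- equivalently that every auxiliary (quadric or cubic) family has dimension $< 4d$ and that no further hidden component exists --- and, for $(14,11)$, that there are exactly two and not more. Establishing this rests on a careful generic-smoothness and dimension estimate, controlling $H^1(N_{C/\cpt})$ along each auxiliary family, together with the verification that the liaison correspondence is dominant onto the generic component. The bookkeeping of which bidegrees $(a,b)$ solve $a+b=d$ and $(a-1)(b-1)=g$, and of their dimensions, is where the special role of the pair $(14,11)$ must be pinned down.
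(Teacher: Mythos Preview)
Your strategy is a genuine alternative to the paper's, and for the case $(14,11)$ it essentially coincides with what the paper does: link by $[4,4]$ to curves in $\mathcal{H}^S_{2,5}$, and compute that both the linkage family and the bidegree-$(3,8)$ quadric family have dimension $44$. The paper, however, does \emph{not} attempt a uniform linkage argument for the other twelve pairs. Instead it simply cites known irreducibility theorems: Ein's result (irreducible whenever $g+3\le d$) covers the eight pairs with $d\le 9$ except $(7,9)$ and $(8,9)$, and Guffroy's result (irreducible whenever $g+9\le 2d\le 22$) covers those two together with $(10,10)$ and $(11,10)$. This disposes of all twelve cases in one line. Your approach trades this citation for a liaison correspondence that must be shown to be dominant onto $\hilb$ and to preserve generic smoothness; as you yourself flag in the last paragraph, controlling $H^1(N_{C/\cpt})$ along each auxiliary family and proving that no stray component survives is the real content, and you have not carried it out.

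One further point on $(14,11)$: to conclude there are exactly two components, it is not enough to exhibit the quadric family and the linkage family and check that both have dimension $44$. The paper also verifies that any $C\in\mathcal{H}^S_{14,11}$ lying on a cubic surface belongs to a family of dimension $43$, and hence cannot contribute a third component (it lies in the closure of the quartic one). Your sketch omits this step; without it the ``exactly two'' claim remains open. So your plan is sound in outline and matches the paper where it matters most, but the paper's route is substantially shorter for the twelve easy cases and fills a gap (the cubic-surface dimension count) that your analysis leaves open for the thirteenth.
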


\begin{proof}
It is known that $\hilb$ is irreducible if $g+3 \le d$ (Ein), or if $g+9 \le 2d \le 22$ (Guffroy, see \cite{Guf} and references therein). 
This gives the result for the twelve first cases.

For $(14,11)$ we now sketch a proof, which was explained to us by H. Nasu, using similar techniques as in \cite{nasu}. 

First assume that $C \in \mathcal{H}^S_{14,11}$ is contained in a cubic surface $S$.
Then:
\begin{itemize}
\item $S$ is not a cone: Otherwise by \cite[Lemma 2.10]{nasu} we would have $d \equiv 0$ or $1 \mod 3$, and in our case $d = 11 \equiv 2 \mod 3$;
\item $S$ is normal: Otherwise the identity (see \cite[(2.7)]{nasu})
$$ g = \frac{(k-1)(2d-3k-2)}2$$
should be satisfied for some $k \ge 0$, which is not the case for $(g,d) = (14,11)$. 
\end{itemize}
Then the curve $C$ is a specialization of a curve in a smooth cubic surface (\cite[Theorem 2.7]{nasu}), and the family of such curves has dimension $g + d + 18 = 43$ (\cite[Proposition 2.5]{nasu}).
But it is a standard fact that any irreducible component of $\hilb$ has dimension greater than $4d$ (\cite[Remark 2.6]{nasu}); in particular any irreducible component of $\mathcal{H}^S_{14,11}$ must have dimension at least $44$.

Assume now that $C$ is contained in a quadric. If $C$ was contained in a quadric cone we would have $g = (a-1)(a+e-1)$ and $d = 2a+e$ for some $a,e \in \mathbb N$ (see last part of  the proof of Proposition \ref{pro:quad}); but there is no such relation for $(g,d) = (14,11)$. Thus $C$ must be a curve of bidegree $(3,8)$ on a smooth quadric. The dimension of such curves is $g + 2d + 8 = 44$, and they form an irreducible component of $\mathcal{H}^S_{14,11}$.

Finally we observe that we can construct $C$ contained in an irreducible quartic by linkage.
Indeed, start from a smooth curve $C' \in \mathcal{H}^S_{2,5}$ contained in a smooth quadric $Q'$: if we identify $Q'$ to $\cpo \times \cpo$, $C'$ is a curve of bidegree $(2,3)$. 
One can show that such a curve is $4$-regular, and applying a theorem of Martin-Deschamps and Perrin \cite[Theorem 3.4]{nasu}, we obtain that a general linkage of type $[4,4]$ yields a smooth curve $C \in \mathcal{H}^S_{14,11}$.
Denote by $W$ the family of curves in $\mathcal{H}^S_{14,11}$ obtained by this process.
Then by  \cite[Lemma 2.2]{nasu}, $W$ has dimension $4(11-5) + 20 = 44$ (here we use the fact that $\dim \mathcal{H}^S_{2,5} = 20$, which can be computed directly or alternatively by using linkages of type $[2,4]$ to twisted cubics, whose parameter family has dimension 12).
The closure of such curves is the second irreducible component of $\mathcal{H}^S_{14,11}$, and contains as a codimension 1 closed subset the curves lying in cubic surfaces.
\end{proof}

\subsection{Existence of smooth quartics containing $C$}
\label{sec:smoothquartics}

\begin{prop}\label{pro:insmoothquartic}
If the blow-up $\pi\colon X \to \cpt$ of a smooth curve $C \subset \cpt$ gives rise to a weak Fano threefold $X$, then $C$ is contained in a pencil of quartics whose general member is smooth.
\end{prop}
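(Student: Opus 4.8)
The plan is to split the statement into two independent parts: first, that $C$ lies in a pencil (in fact a higher-dimensional linear system) of quartics, and second, that a general member of such a pencil is smooth. The first part I would settle cohomologically. Since $X$ is weak Fano, $-K_X$ is big, so $(-K_X)^3>0$ and Lemma~\ref{lem:K^3} gives $g\ge 4d-30$. Writing $-K_X=4H-E$ and using $\pi_*\mathcal{O}_X(-E)=\mathcal{I}_C$ together with $R^i\pi_*\mathcal{O}_X(-E)=0$ for $i>0$, the projection formula and the Leray spectral sequence identify $H^j(X,-K_X)$ with $H^j(\cpt,\mathcal{I}_C(4))$ for all $j$. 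As $-K_X=K_X+(-2K_X)$ with $-2K_X$ big and nef, Kawamata--Viehweg vanishing yields $H^j(X,-K_X)=0$ for $j>0$, hence $H^j(\cpt,\mathcal{I}_C(4))=0$ for $j>0$. Therefore $h^0(\mathcal{I}_C(4))=\chi(\mathcal{I}_C(4))=35-(4d+1-g)=34+g-4d\ge 4$, using $g\ge 4d-30$. In particular the quartics through $C$ form a linear system of projective dimension at least $3$, so $C$ certainly lies in a pencil of quartics; note that this also reproves the relevant case of Lemma~\ref{lem:hypersurfaces} without the restriction coming from the speciality of $\mathcal{O}_C(4)$.

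For the second part, fix a general pencil inside $|\mathcal{I}_C(4)|$. Away from $C$ the argument is routine: the system has no fixed component (its dimension is positive and $C$ is irreducible), so a general member is irreducible and reduced, and its one-dimensional base locus is $C$; by Bertini a general member is then smooth on $\cpt\smallsetminus C$. The substantial point, and the main obstacle, is to prove that the general member is smooth at every point of $C$.

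To handle smoothness along $C$, I would use the conormal sequence twisted by $\mathcal{O}(4)$,
$$0\longrightarrow \mathcal{I}_C^2(4)\longrightarrow \mathcal{I}_C(4)\longrightarrow N^{*}_{C/\cpt}(4)\longrightarrow 0,$$
together with the classical observation that a quartic $S=\{s=0\}$ through $C$ is singular at $p\in C$ exactly when the image $\bar s(p)$ of $s$ in the fibre of the rank-two bundle $N^{*}_{C/\cpt}(4)$ at $p$ vanishes. It therefore suffices to show that the subsystem $V\subseteq H^0(C,N^{*}_{C/\cpt}(4))$ cut out by $H^0(\mathcal{I}_C(4))$ is base-point-free: since $N^{*}_{C/\cpt}(4)$ has rank $2>\dim C=1$, a general section in a base-point-free $V$ vanishes nowhere on $C$, which is precisely the assertion that a general quartic through $C$ is smooth along $C$. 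Base-point-freeness of $V$ I expect to follow from the positivity of $-K_X=4H-E$ already exploited above, via the surjectivity of $H^0(\mathcal{I}_C(4))\to H^0(N^{*}_{C/\cpt}(4))$ (controlled by $H^1(\mathcal{I}_C^2(4))$) together with global generation of $N^{*}_{C/\cpt}(4)$; this last positivity statement can fail for very special curves, so it is here that the weak Fano hypotheses, equivalently the numerical constraints on $(g,d)$, must genuinely be used, and this is the technical heart of the proof. A more geometric alternative leading to the same conclusion is to work on the blow-up: a general $\tilde S\in|-K_X|$ is a smooth anticanonical surface, and since $(-K_X)\cdot f=1$ the intersection $\tilde S\cap E$ is a section of the ruled surface $E\to C$ precisely when it contains no fibre; smoothness of the image quartic along $C$ is then equivalent to the absence of such vertical fibres for general $\tilde S$, which again is governed by the positivity of $-K_X$.
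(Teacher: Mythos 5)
Your first half is fine and coincides with the paper's: the identification $H^j(X,-K_X)\simeq H^j(\cpt,\mathcal{I}_C(4))$ plus Kawamata--Viehweg gives $h^0(\mathcal{I}_C(4))=34+g-4d\ge 4$ once $(-K_X)^3>0$, which is exactly the paper's count $\dim\lvert -K_X\rvert=\tfrac{(-K_X)^3}{2}+2$. The problem is the second half. You correctly isolate the hard point --- that a general quartic through $C$ could be singular at a point of $C$ --- but you do not prove anything about it: ``Base-point-freeness of $V$ I expect to follow from the positivity of $-K_X$'' is a statement of hope, not an argument, and the geometric alternative you sketch (``the absence of such vertical fibres \dots is governed by the positivity of $-K_X$'') begs the same question. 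Note also that big and nef does \emph{not} imply $\lvert -K_X\rvert$ is base point free, so neither the global generation of $N^*_{C/\cpt}(4)$ by quartics nor the smoothness of a general $\tilde S\in\lvert -K_X\rvert$ can be waved through; and even granting weak base-point-freeness of $V\subset H^0(N^*_{C/\cpt}(4))$, the ``general section vanishes nowhere'' conclusion needs the evaluation maps to have rank $2$ away from a finite set, which is again unproven.

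For comparison, here is what the paper actually does at this point, and it is not soft. If $\lvert -K_X\rvert$ is base point free one argues as you suggest ($-K_X\cdot f=1$ forces $S\cap E$ to be a section of $E\to C$). If not, one invokes Shin's theorem (resting on Saint-Donat's structure result for non-base-point-free big and nef systems on K3 surfaces): the base locus is a single smooth rational curve $\Gamma$, the general $S\in\lvert -K_X\rvert$ is still smooth, and $\lvert -K_X\rvert_{|S}=kM+\Gamma$ with $M$ elliptic, $M\cdot\Gamma=1$. The bad case is then $\Gamma$ a fibre of $E$, making $\pi(S)$ have an ordinary double point on $C$; this is excluded by computing $M\cdot E=\frac{3+4d-2g}{k}$ from $K_X^2\cdot E=2+4d-2g$, bounding $M\cdot E\ge 11$ from $\deg\pi(M)\ge 3$, deducing $k=1$ from parity and $3+4d-2g<33$, and finally reaching the contradiction $g=4d-31<g$ via a $[4,4]$ linkage between $\pi(M)$ and $C$. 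None of this is recoverable from ``positivity of $-K_X$'' alone, so as it stands your proposal establishes the existence of the pencil but not the smoothness of its general member.
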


\begin{proof}
By the Riemann-Roch formula on a threefold and Kawamata-Viehweg vanishing, $\dim \lvert -K_X \rvert = \frac{(-K_X)^3}{2} + 2$.

Assume first that the linear system $\lvert -K_X \rvert$ is base point free, which implies that a general element $S \in \lvert -K_X \rvert$ is a smooth $K3$ surface. Then the restricted system $\lvert -K_X \rvert_{|E}$ is also base point free. 
Since $-K_X\cdot f = 1$, where $f$ is an exceptional curve for $\pi$, we deduce that $S_{|E}$ is a smooth irreducible curve which is a section of $\pi\colon E \to C$.
In consequence $\pi(S)$, which is a general quartic through $C$, is smooth.

We assume now that the base locus of $\lvert -K_X \rvert$ is non empty.
By a result of Shin \cite[Theorem (0.5)]{shin} (building on previous results of Shokurov and Reid), the base locus $\lvert -K_X \rvert$ is isomorphic to a smooth rational curve $\Gamma$, and the general member $S$ of $\lvert -K_X \rvert$ is smooth away from $\Gamma$, hence globally smooth by Bertini's Theorem (Shin assumes $X$ with canonical singularities, but since in our setting $X$ is smooth we get a more precise result).  
We do not repeat the argument of Shin here, but we want to point out that the main point is a result of Saint-Donat \cite{SD}, which states that a big and nef linear system on a smooth $K3$ surface which is not base point free has the form $kM + \Gamma$, where $k > 0$, $M$ is a smooth elliptic curve, $\Gamma$ is a rational $(-2)$-curve and $M\cdot \Gamma = 1$. 
More specifically Saint-Donat proved this for an ample linear system, and the same property holds in the  big and nef case by \cite[Lemma (2.1)]{shin}.
The result follows by applying Saint Donat's result to the linear system  
$\lvert -K_X \rvert_{|S}$.


We deduce that if a general quartic $\pi(S)$ through $C$ is singular, then the singular locus is a unique ordinary double point $p= \pi(\Gamma)$, where $\Gamma \subset S$ is the base locus of $\lvert -K_X \rvert_{|S}$, and is also a fibre of the exceptional divisor $E$. 
We now derive a contradiction from this situation. 

By Lemma \ref{lem:K^3} we have $S\cdot S\cdot E = (-K_X)^2\cdot E = 2+4d-2g$.
On the other hand $S\cdot S\cdot E = (kM+\Gamma)\cdot E = kM\cdot E - 1$.
Hence the relation
$$M\cdot E = \frac{3+4d-2g}{k}.$$

We also have $1 = S\cdot M = -K_X\cdot M = -(\pi^*(K_\cpt) + E)\cdot M = 4\deg \pi(M) - M\cdot E$. Since $\pi(M)$ is an elliptic curve, we have $\deg \pi(M) \ge 3$, and 
$$ M\cdot E \ge 11.$$
On the other hand, again by Lemma \ref{lem:K^3} we have $4d \le g + 30$ hence $3+4d-2g \le 33 -g$. 
Thus we have $3+4d-2g < 33$, since if $g = 0$ then $4d \le g + 30$ implies $4d \le 28 < 30$.
Since $3+4d-2g$ is odd, we obtain $k=1$. 
Moreover we have a linkage of type $[4,4]$ between $\pi(M)$ and $C$, hence $g - g(M) = 2(d - \deg(M))$ with $\deg(M) = 16 - d$ and $g(M) = 1$.
So we obtain the contradiction $g = 2(2d-16) + 1 = 4d -31 <g$. 
\end{proof}

\subsection{Blow-up of points in $\p^k$} \label{sec:pointsP3}
For sake of completeness, we include here the case of blow-up of points in $\p^k$, $k \ge 3$, which is an easy but nice exercise. It is probably well-known to specialists, but we did not find any reference in the literature.
Note however that Geiser and Bertini like involutions associated with the blow-up of 6 or 7 points in $\cpt$ were classically known (see e.g. \cite[4.6 page 91]{Dol-Cremona}).

\begin{prop}
Let $k \ge 3$.
The blow-up $X$ of $d$ points in $\p^k$ is Fano if and only if $d=1$. 
It is weak Fano is and only if  $k = 3$, $d\le 7$ and the following conditions hold:
\begin{enumerate}[$(1)$]
\item
no $3$ points belong to a same line;
\item
no $5$ belong to the same conic \parent{or equivalently to the same plane};
\item
no $7$ belong to the same twisted cubic.
\end{enumerate}
\end{prop}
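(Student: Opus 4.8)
The plan is to analyze the blow-up $\pi\colon X\to\p^k$ of $d$ points via intersection theory on the exceptional divisors, reducing everything to a numerical criterion for $-K_X$ in the spirit of Lemma~\ref{lem:badcurve}. Writing $H$ for the pull-back of a hyperplane and $E_1,\dots,E_d$ for the exceptional divisors, one has $-K_X = (k+1)H - (k-1)\sum_i E_i$, since the blow-up of a smooth point in a $k$-fold introduces the exceptional divisor with multiplicity $k-1$ in the canonical formula. The key point is that $-K_X$ being nef (resp.\ ample) is governed by its intersection with irreducible curves, and the relevant test curves are the strict transforms of rational curves through several of the points: a line through two points meets $E_i$ and $E_j$, a conic (a plane rational curve of degree $2$) through $5$ points, and more generally a rational normal curve of degree $n$ in a $\p^n$ through $2n+1$ points. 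Computing $-K_X$ against such a strict transform $\tilde\Gamma$ of a degree-$n$ curve passing through $s$ of the points gives $-K_X\cdot\tilde\Gamma = (k+1)n-(k-1)s$, so nefness forces $(k-1)s\le(k+1)n$ along every irreducible rational curve.

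First I would establish the Fano claim, which is the quickest: the exceptional divisor $E_1$ of a single point is isomorphic to $\p^{k-1}$ with a line $\ell\subset E_1$ satisfying $-K_X\cdot\ell = -(k-1)E_1\cdot\ell = k-1-(k-2)=1>0$ only for $d=1$; already for $d=2$ the strict transform of the line through the two points has $-K_X\cdot\tilde\Gamma = (k+1)-2(k-1)=3-k\le 0$ for $k\ge 3$, killing ampleness (and for $k\ge 4$ even nefness). This simultaneously forces $k=3$ in the weak Fano case, since for $k\ge 4$ the line through any two of the $d\ge 2$ points already violates nefness, and $d=1$ gives only a single $\p^{k-1}$ blown up, which is Fano rather than merely weak Fano strictly. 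So from here on I specialize to $k=3$, where $-K_X = 4H-2\sum_i E_i$ and the nef condition along the strict transform of a degree-$n$ rational curve through $s$ points reads $2n-s\ge 0$, i.e.\ $s\le 2n$; the bad curves are exactly $3$-secant lines, $5$-secant conics, and $7$-secant twisted cubics, which recovers conditions $(1)$, $(2)$, $(3)$.

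The substantive direction is \emph{sufficiency}: assuming $d\le 7$ and that $(1)$--$(3)$ hold, I must show $-K_X$ is big and nef. Bigness is easy from $(-K_X)^3 = 4^3 - d\cdot 2^3 = 64-8d>0$ for $d\le 7$, so the real content is nefness, i.e.\ ruling out \emph{every} irreducible curve $\Gamma$ of degree $n$ through the points with multiplicity total $s>2n$, not merely the low-degree rational ones. Here I would argue that a putative bad curve of degree $n$ must be reducible or otherwise decomposes, and reduce to the three listed configurations: the estimate $d\le 7$ together with general-position bookkeeping bounds the possible $(n,s)$, and the condition $2n<s$ combined with $s\le d\le 7$ confines $n$ to small values, after which a direct case analysis (a curve of degree $n$ meeting $d\le 7$ points with $s>2n$ forces $n\le 3$ and the curve to be a line, conic, or twisted cubic through the excess-secant number of points) finishes the argument. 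I expect this reduction, showing that \emph{only} the three named types of bad curves can occur and that nothing of higher degree sneaks in, to be the main obstacle; the cleanest route is probably to bound $n$ by noting $s>2n$ and $s\le d\le 7$ give $n\le 3$, then treat $n=1,2,3$ separately, using that an irreducible curve through $s$ points of degree $n$ with $s$ large must be one of the classical rational normal curves (with the parenthetical remark in $(2)$ that $5$ points on a conic is equivalent to $5$ points in a plane handled by the same linear-algebra count).

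=== END ===

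Finally, note that the openness and non-emptiness of the general-position conditions for $d\le 7$ points in $\p^3$ is immediate, so I would not dwell on it.
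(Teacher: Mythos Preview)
Your outline is correct through the necessity direction and the reduction to $k=3$. The gap is in sufficiency: you claim that a bad irreducible curve $\Gamma$ of degree $n$ with total multiplicity $s=\sum_i m_i>2n$ at the blown-up points satisfies $s\le d\le 7$, forcing $n\le 3$. But $s\le d$ holds only when every $m_i\le 1$; nothing prevents $\Gamma$ from being singular at some $p_j$, and then $m_j\ge 2$. So your bound $n\le 3$ is unjustified, and even within $n=3$ your case analysis omits singular cubics (a nodal plane cubic through six of the points, nodal at one of them, already has $s=7$). A small aside: your computation $-K_X\cdot\ell=k-1-(k-2)=1$ for a line $\ell$ in an exceptional $E_i\cong\p^{k-1}$ is off; since $H\cdot\ell=0$ and $E_i\cdot\ell=-1$ one gets $-K_X\cdot\ell=k-1$, though the sign (hence the conclusion) is unaffected.

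The paper supplies the missing degree bound via the half-anticanonical system $\lvert 2H-\sum E_i\rvert$, i.e., the quadrics in $\p^3$ through the $d\le 7$ points. Condition~(2) (no five coplanar) guarantees a general member is irreducible, so one finds two distinct irreducible quadrics $Q_1,Q_2$ through the points; any curve with $s>2n$ meets each in more than $2n$ points and therefore lies in $Q_1\cap Q_2$, forcing $n\le 4$. One then eliminates $n=1,2,3,4$ in turn: conditions (1) and (2) handle $n\le 2$; for $n=3$ an irreducible cubic with $s\ge 7$ is either planar (giving at least six coplanar points, contradicting (2)) or a smooth twisted cubic through seven points (contradicting (3)); for $n=4$ the curve is the complete intersection $Q_1\cap Q_2$, has arithmetic genus $1$, hence admits at most one double point and $s\le 8=2n$, a contradiction. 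The quadric trick is the idea your argument is missing.
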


\begin{proof}
The blow-up of one single point gives a Mori fibre space $X\to \p^{k-1}$ induced by the lines passing through the blown-up point. It is clearly a Fano variety.

Assume now $d\ge 2$.
The strict transform $l$ of any line passing through two points satisfies $K_X \cdot l = -k-1+2(k-1) = k - 3$. 
Hence $-K_X$ cannot be nef if $k \ge 4$. 
We assume now $k = 3$. If $X$ is weak-Fano, we have $(-K_X)^3=64-8d>0$ so $d\le 7$, and the three conditions above are clearly satisfied. It remains to prove that the conditions imply that $X$ is weak-Fano.
 
Let $\Lambda$ be the linear system of quadrics of $\p^3$ passing through the $d$  blown-up points, which corresponds to the image of  $|-\frac{1}{2}K_X|$.
The fact that $d\le 7$ implies that $\dim \Lambda \ge 9-d\ge 2$. 
Since no $5$ of the points belong to the same plane, a general member of $\Lambda$ is irreducible.
  
We can thus find two distinct irreducible quadrics $Q_1,Q_2$ passing through the $d$ points. If one irreducible curve of $X$ intersects $-K_X$ negatively, it has to come from a curve $C\subset \p^3$ contained in $Q_1$ and $Q_2$, which has thus  degree $n\le 4$. Writing $m_1,\dots,m_d$ the multiplicities of $C$ at the points, we have $\sum_{i=1}^d m_i>2n$.
  
If $n=1$, we find at least three points on $C$, impossible by hypothesis. We get a similar contradiction if $C$ is a conic.
  
If $n=3$, we have  either $7$ points on the curve or $6$ points whose one is singular. The curve cannot lie on a plane, otherwise $6$ points would lie on a plane. And it cannot be a twisted cubic by hypothesis.
   
The last case is when $n=4$ which implies that $C$ is a complete intersection of two quadrics hence has arithmetic genus $g = 1$. In particular $C$ admits at most one double point and $\sum_{i=1}^d m_i \le 8 = 2n$: contradiction.   
\end{proof}

\section{Curves in a plane or a quadric} 
\label{sec:quad}

\begin{prop}\label{pro:quad}
Let $C\subset S \subset \p^3$ be a smooth curve of genus $g$ and degree~$d$ which is contained in a plane, a smooth quadric or a quadric cone $S$. 
Let $\pi\colon X\to \p^3$ be the blow-up of $C$.
We note $\overline{\NE}(X) = \langle f, r\rangle$, where $f$ is a $\pi$-exceptional curve. 
Then $-K_X$ is big, $r=l-nf$ where $l$ is the pull-back of a line of $\p^3$ and $n\in \mathbb{N}$, and  one of the following occurs:
\begin{enumerate}[$(i)$]

\item $-K_X$ is ample; there exists a contraction $X\to Y$ which yields a Sarkisov link $\p^3\dasharrow Y$ or $\p^3\dasharrow X$. Namely we are in one of the following cases, the details of which are discussed in Examples {\rm \ref{ex:plane}(i-iii)}, {\rm \ref{ex:quadric}(i-v)} and {\rm \ref{ex:quadriccone}(i-ii)} respectively:
\begin{itemize}
\item $S$ is a plane: $(g,d) = (0,1), (0,2)$ or $(1,3)$;
\item $S$ is a smooth quadric, and $C$ is not a plane curve: $(g,d) = (0,3), (0,4), (1,4), (2,5)$ or $(4,6)$.
\item $S$ is a quadric cone, and $C$ is not contained in any other quadric: $(g,d) = (2,5)$ or $(4,6)$. 
\end{itemize}

\item $-K_X$ is nef but not ample, then the curves numerically equivalent to $r=4l-f$ cover the surface $S$, the anticanonical morphism is a divisorial contraction and we are in one of the following cases:
\begin{itemize}
\item $S$ is a plane: $(g,d) = (3,4)$;
\item $S$ is a smooth quadric, and $C$ is not a plane curve: $(g,d) = (0,5)$, $(3,6)$, $(6,7)$ or $(9,8)$.
\item $S$ is a quadric cone, and $C$ is not contained in any other quadric: $(g,d) = (6,7)$ or $(9,8)$. 
\end{itemize}

\item $-K_X$ is not nef,  then the curves numerically equivalent to $r=l-nf$ for $n\ge 5$ cover the surface $S$, $\lvert -mK_X \rvert$ has a fixed component for any $m$ and we are in one of the following cases:
\begin{itemize}
\item $S$ is a plane: $g=\frac{(d-1)(d-2)}{2}$ and  $d\ge 5$; if $g \le 14$ then $(g,d) = (6,5)$ or $(10,6)$;
\item $S$ is a smooth quadric, and $C$ is not a plane curve; if $4d-30\le g\le 14$ then $(g,d)\in \{ (0, 6)$, $(0, 7)$, $(4, 7)$, $(5, 8)$, $(6, 9)$, $(8, 8)$, $(10, 9)$, $(12, 9)$, $(12, 10)$, $(14, 11)\}$.
\item $S$ is a quadric cone, $C$ is not contained in any other quadric and $g\ge 16$. 
\end{itemize}
\end{enumerate}
Moreover, any of the possibilities given above occurs.
\end{prop}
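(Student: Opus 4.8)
The plan is to treat the three surfaces $S$ (plane, smooth quadric, quadric cone) in parallel, since in each case $S$ has a very explicit Picard group and the curve $C$ is determined up to deformation by its class on $S$. The overarching strategy is to compute the extremal ray $r$ of $\overline{\NE}(X)$ by exhibiting the ruling (or pencil of lines) on $S$ whose strict transforms sweep out $S$, and then to read off from the single number $n = r \cdot (-E)$ whether $-K_X$ is ample, nef, or neither, via Lemma \ref{lem:badcurve}. The key structural input is that $-K_X$ is always big: this follows from Lemma \ref{lem:K^3} since the numerical constraints $4d - 30 \le g \le 14$ force $(-K_X)^3 = 62 - 8d + 2g > 0$, and more directly because $C$ lies on a surface of degree $\le 2$, so $C$ lies on many quartics and $R = 4H - \deg(S)\cdot(\text{something})$ is manifestly effective.

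First I would fix the geometry of lines on $S$. On a plane, the relevant curves are the lines of $\p^2$; a general such line meets $C$ in $d$ points (by B\'ezout), so its strict transform is $l - d f$ and $r = l - df$, whence $-K_X \cdot r = 4 - d$. On a smooth quadric $S \cong \cpo \times \cpo$, write $C$ of bidegree $(a,b)$; the two rulings give strict transforms $l - a f$ and $l - bf$, and the extremal ray is the one with larger secant number, say $r = l - b f$ with $b \ge a$, giving $-K_X \cdot r = 4 - b$. On a quadric cone the single ruling of lines through the vertex meets $C$ in $a$ points where $d = 2a + e$, yielding $r = l - af$ and $-K_X \cdot r = 4 - a$. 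In each case I must confirm that these ruling curves genuinely generate $\overline{\NE}(X)$ together with $f$, which I would justify by noting that $S$ is covered by these lines, so any curve computing a smaller slope $-n/1$ with $n$ larger would have to lie on $S$ and be a line of the pencil.

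Next I would convert the inequality on $n$ into the genus–degree dictionary. Given the formula for the arithmetic/geometric genus of a curve of bidegree $(a,b)$ on the smooth quadric, namely $g = (a-1)(b-1)$ and $d = a+b$, and the analogous $g = \tfrac{(d-1)(d-2)}2$ on the plane and $g = (a-1)(a+e-1)$, $d = 2a+e$ on the cone, the conditions $n \le 3$ (ample, part $(i)$), $n = 4$ (nef, part $(ii)$), and $n \ge 5$ (not nef, part $(iii)$) translate directly into the listed pairs $(g,d)$ after imposing $4d - 30 \le g \le 14$. I would tabulate the finitely many solutions in each regime; this is bookkeeping, with the constraint that on the quadric $C$ not be a plane curve (so $a,b \ge 1$ both nonzero) and on the cone $C$ not lie on another quadric (pinning down $e$).

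The main obstacle, and the only genuinely delicate part, is establishing the \emph{precise} behaviour of the anticanonical morphism claimed in $(ii)$ and $(iii)$ — that for $n = 4$ the curves equivalent to $4l - f$ cover $S$ so that the anticanonical morphism is a divisorial contraction of $S$, and that for $n \ge 5$ the system $\lvert -mK_X \rvert$ acquires a fixed component $S$ for every $m$. For $(ii)$ I would show that the strict transform $\tilde S$ of $S$ is covered by $(-K_X)$-trivial curves and is itself contracted, identifying $\tilde S$ as the exceptional divisor; the covering family is exactly the pencil of lines, now of class $4l - f$, that I computed above. For $(iii)$ the point is that once $n \ge 5$ the divisor $\tilde S$ satisfies $\tilde S \cdot r < 0$ for the extremal ray $r$, forcing $\tilde S$ into the base locus of every $\lvert -mK_X\rvert$; I would verify $-K_X - \tilde S$ (or the appropriate multiple) is still effective to conclude $\tilde S$ is a genuine fixed component. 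Finally, the clause ``any of the possibilities given above occurs'' requires an existence statement: for each listed $(g,d)$ I would exhibit a concrete smooth $C$ of that bidegree on the relevant $S$, which is immediate on the plane and smooth quadric, and on the cone follows by choosing $a, e$ realizing the pair; the detailed link constructions are deferred to the cited Examples.
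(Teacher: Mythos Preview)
Your overall strategy matches the paper's: identify the extremal ray $r$ via the ruling of $S$ and read off ampleness/nefness from $-K_X\cdot r = 4-n$. There are, however, two concrete gaps.

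\textbf{The key step is only asserted, not proved.} You write that a competing curve ``would have to lie on $S$ and be a line of the pencil'', but a priori such a curve can have arbitrary degree $m$, giving class $ml-nf$ with slope $n/m$ larger than the ruling's. The paper handles this explicitly. For instance on the smooth quadric, with $C$ of bidegree $(a,b)$ and $a\ge b$: such a curve meets $S$ in $2m\le am<n$ points, hence lies on $S$; writing it there as $\alpha f_1+\beta f_2$ with $\alpha+\beta=m$, its intersection with $C$ is $b\alpha+a\beta\le a(\alpha+\beta)=am<n$, a contradiction. This computation for curves of all degrees on $S$ is the real content, and your sketch does not supply it.

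\textbf{The cone formula is wrong.} Writing $C\sim as+(2a+e)f_0$ on $\mathbb{F}_2$ with $e\in\{0,1\}$, a general ruling of the cone meets $C$ in $a+e$ points in $\p^3$, not $a$: when $e=1$ both $C$ and the ruling pass through the vertex, contributing an extra intersection. So $r=l-(a+e)f$. With your $r=l-af$ you would misclassify $(g,d)=(6,7)$, i.e.\ $(a,e)=(3,1)$, as Fano rather than weak Fano. The paper again argues on $\mathbb{F}_2$ with an arbitrary curve $\alpha s+mf_0$, and carefully adds the contribution $e(m-2\alpha)$ coming from the contraction of $s$ to the vertex.

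A minor point: $(-K_X)^3>0$ alone does not imply bigness, and in any case the inequality $4d-30\le g$ is not a hypothesis here. Your second argument---that $C$ lies on a surface of degree at most $2$, so $\lvert -K_X\rvert$ contains the reducible quartics through $S$ and is visibly big---is the correct one and is exactly what the paper uses.
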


\begin{proof}It is clear that $-K_X$ is big, since $\lvert -K_X\rvert$ contains the reducible linear system generated by the quartics containing~$S$.\\

Suppose first that $C$ is contained in a plane, which implies that $g=\frac{(d-1)(d-2)}{2}$ and that $r=l-df$, where $l$ is the pull-back of a line of $\p^3$. If $d\ge 2$, the curves numerically proportional to $r$ cover the strict transform of the plane. If $d=4$, these curves intersect trivially $K_X$, so $-K_X$ is nef but the anticanonical morphism contracts the strict transform of the plane. If $d>4$ these curves intersect negatively $-K_X$ so $\lvert -mK_X \rvert$ is not movable and not nef. If $d\le 3$, these intersect positively $-K_X$ so $X$ is Fano. The cases $d=1,2,3$ yields $g=0,0,1$ and the three Sarkisov links are described in Example \ref{ex:plane}.\\

Suppose now that $C$ is contained in a smooth quadric $Q\subset\p^3$, isomorphic to $\p^1\times \p^1$. 
On this surface $C\sim af_1+bf_2$, where $f_1,f_2$ are the two fibres of the two projections, and the restriction of a hyperplane section is $H=f_1+f_2=-\frac{1}{2}K_Q$.
Thus we find $d=H\cdot C=a+b$ and $-2+2g=C^2-2CH=2ab-2a-2b$, so $g=(a-1)(b-1)$. 
We can assume that $a\ge b$ and that $a\ge 2$ (otherwise $C$ is contained in a plane). 

We prove now that $r=l-af$. 
Suppose for contradiction that $r=ml-nf$ with $n>am$ is the strict transform of a curve of degree $m$ passing $n$ times through $C$, its intersections with the quadric $Q$ is $2m\le am<n$, so the curve is contained in the quadric. 
On $Q$ it is equivalent to $\alpha f_1+\beta f_2$ with $\alpha+\beta=m$, and its intersection with $C$ is $b \alpha + a \beta\le a\alpha+a\beta=am<n$, a contradiction.  

If $a\ge 4$,  all curves equivalent to $r$ cover the strict transform of $Q$, and $-K_X$ is not ample. 
Moreover, $-K_X$ is nef if and only if $a=4$, and in this case the anticanonical morphism  contracts the strict transform of $Q$. The possibilities for $a=4$ are $(a,b)=(4,1)$, $(4,2)$, $(4,3)$ and $(4,4)$, corresponding to $(g,d)=(0,5)$, $(3,6)$, $(6,7)$ and $(9,8)$. If $a\ge 5$, $-K_X$ is not nef and thus $-mK_X$ is not movable; since $g=(a-1)(b-1)$ and $d=a+b$, the possibilities where $4d-30\le g\le 14$ are $(0, 6)$, $(0, 7)$, $(4, 7)$, $(5, 8)$, $(6, 9)$, $(8, 8)$, $(10, 9)$, $(12, 9)$, $(12, 10)$, $(14, 11)$.

If  $a\le 3$, $-K_X$ is ample and the contraction of all curves equivalent to $r$ yields a contraction $X\to Y$. 
The possibilities are $(a,b) = (2,1)$, $(2,2)$, $(3,1)$, $(3,2)$ and $(3,3)$, corresponding to $(g,d) = (0,3)$, $(1,4)$, $(0,4)$, $(2,5)$ and $(4,6)$: see Example \ref{ex:quadric}.

Note that when $a=3$ and $b\in \{1,2,3\}$, $C$ is not contained in another quadric because $2H-C$ is not effective on $Q$. \\

Suppose finally that $C$ is contained in a quadric cone $V\subset \p^3$. Blowing-up the singular point yields the Hirzebruch surface $\mathbb{F}_2$, whose Picard group is generated by $f_0$ and $s$, where $f_0$ is a fibre and $s$ the exceptional section of self-intersection $-2$. On $\mathbb{F}_2$, $C$ is equivalent to $as+(2a+e)f_0$ for some $a,e\in \mathbb{N}$. Since $C$ is smooth on $V$, its intersection  $e=C\cdot s$ with $s$ on $\mathbb{F}_2$ is equal to $0$ or $1$; moreover  $a=C\cdot f_0\ge 0$. The restriction of a hyperplane section of $\p^3$ on $\mathbb{F}_2$ is $H=s+2f_0=-\frac{1}{2}K_{\mathbb{F}_2}$. This implies that $d=C\cdot H=2a+e$ and that $-2+2g=C^2-2C\cdot H=ea+a(2a+e)-2(2a+e)$, so  $g=(a-1)(a+e-1)$. 

We can assume that $C-2H$ is not effective, since otherwise $C$ would be contained in a plane or a smooth quadric. This means that either $(a,e)=(2,1)$ or $a\ge 3$. 

We now prove that $r=l-(a+e)f$, corresponding to the transform of a rule on $V$.
Suppose for contradiction that $r=ml-nf$ with $n>(a+e)m$ is the strict transform of a curve $C'$ of degree $m$ passing $n$ times through $C$.
Its intersections with the quadric $V$ is $2m\le (a+e)m <n$, so the curve is contained in the quadric. 
On $\mathbb{F}_2$, the curve $C'$ is equivalent to $\alpha s+m f_0$, with $\alpha=C'\cdot f_0\ge 0$
and its intersection with $C$ is $e\alpha+am$.
We have $C \cdot s = e$, $C'\cdot s = m - 2 \alpha$, so the contraction of $s$ produces $e(m-2\alpha)$ new intersecting points.
Hence the number of intersecting points of $C$ and $C'$ in $\cpt$ is $e\alpha+am + e(m-2\alpha)\le(a+e)m <n$, a contradiction.  

If $a+e\ge 4$,  all curves equivalent to $r$ cover the strict transform of $Q$ and $-K_X$ is not ample. Moreover, $-K_X$ is nef if and only if  $a+e=4$, which corresponds to $(a,e)=(3,1)$ and $(4,0)$, and give respectively $(g,d)=(6,7)$ and $(9,8)$. In this case the anticanonical morphism contracts the strict transform of $Q$. 

If $a+e\ge 5$,  $-K_X$ is not nef and thus $\lvert -mK_X \rvert$ must have a fixed component for all $m$; moreover $g=(a-1)(a+e-1)\ge 16$.
The possibilities where $4d-30\le g\le 14$ are $(0, 6)$, $(0, 7)$, $(4, 7)$, $(5, 8)$, $(6, 9)$, $(8, 8)$, $(10, 9)$, $(12, 9)$, $(12, 10)$, $(14, 11)$.

If $a+e \le 3$, $-K_X$ is ample and the possibilities for $(a,e)$ are $(2,1)$ and $(3,0)$, which give respectively $(g,d) = (2,5)$ and $(4,6)$: see Example \ref{ex:quadriccone}.
\end{proof}
\begin{coro}\label{coro:planeQuadricBN}
Let $C\subset S \subset \p^3$ be a smooth curve of genus $g$ and degree~$d$ which is contained in a plane, a smooth quadric or a quadric cone $S$. Let $\pi\colon X\to \p^3$ be the blow-up of $C$.

Then the following conditions are equivalent:
\begin{enumerate}[$(i)$]
\item
$-K_X$ is ample;
\item
there is no $4$-secant line to $C$;
\item
$(g,d)\in \mathcal{A}_1=\{(0,1),(0,2),(0,3),(0,4),(1,3),(1,4),(2,5),(4,6)\}$.
\end{enumerate}

The following are also equivalent:

\begin{enumerate}[$(i)$]
\item
$-K_X$ is big and nef;
\item
there is no $5$-secant line to $C$;
\item
$(g,d)\in \mathcal{A}_1 \cup \{(0,5),(3,4),(3,6),(6,7),(9,8)\}$.
\end{enumerate}
\end{coro}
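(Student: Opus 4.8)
The plan is to deduce the two chains of equivalences from Proposition~\ref{pro:quad} together with the bad-curve criterion of Lemma~\ref{lem:badcurve}, the genuinely new ingredient being the translation of the numerical conditions of the proposition into the geometric condition on secant lines.

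I would first dispose of $(i)\Leftrightarrow(iii)$ (for both ampleness and big-and-nefness) as pure bookkeeping. Proposition~\ref{pro:quad} lists, separately for the plane, the smooth quadric and the quadric cone, exactly which pairs $(g,d)$ make $-K_X$ ample (its part $(i)$) and which make it nef but not ample (its part $(ii)$). Taking the union of the ample cases over the three geometries returns precisely $\mathcal{A}_1$, and adjoining the nef-but-not-ample cases returns $\mathcal{A}_1\cup\{(0,5),(3,4),(3,6),(6,7),(9,8)\}$; this settles $(i)\Leftrightarrow(iii)$ in both cases.

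The substantive step is $(i)\Leftrightarrow(ii)$, which I would obtain by analysing the secancy of lines. One implication is immediate from Lemma~\ref{lem:badcurve}: if some line $\ell$ meets $C$ in at least $4$ (resp.\ $5$) points, its strict transform is an irreducible curve of class $l-nf$ with $n\ge 4$ (resp.\ $n\ge 5$), which prevents $-K_X$ from being ample (resp.\ nef). For the converse I would identify the maximal possible secancy of a line. The extremal ray computed in Proposition~\ref{pro:quad} is $r=l-nf$ with $n=d$ (plane), $n=a$ (smooth quadric) or $n=a+e$ (quadric cone), and in each case $r$ is swept out by a family of lines of secancy exactly $n$: the lines of the plane, the ruling $f_2$, or the rulings through the vertex respectively. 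The key geometric remark is that every other line is at most $2$-secant, because a line not contained in $S$ meets the plane or quadric $S$ in at most two points and hence meets $C\subset S$ in at most two points. Consequently the maximal secancy of a line equals $n$, so a line of secancy $\ge 4$ (resp.\ $\ge 5$) exists if and only if $n\ge 4$ (resp.\ $n\ge 5$); comparing with the thresholds $n\le 3$ and $n\le 4$ read off Proposition~\ref{pro:quad} then yields both equivalences.

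The one place needing care is the quadric cone, where the vertex perturbs the intersection count after blow-up. Specialising the formula used in the proof of Proposition~\ref{pro:quad} to a single ruling ($m=1$, $\alpha=0$) shows that a ruling is $(a+e)$-secant, in agreement with $r=l-(a+e)f$; lines through the vertex that are not rulings meet $C$ at most at the vertex, and lines off the cone are at most $2$-secant. Since $C$ is assumed to lie on no further quadric, no other source of highly secant lines arises, and the argument goes through uniformly.
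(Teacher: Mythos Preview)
Your proof is correct and follows essentially the same route as the paper, which simply argues the cycle $(i)\Rightarrow(ii)\Rightarrow(iii)\Rightarrow(i)$ in one line from Lemma~\ref{lem:badcurve} and Proposition~\ref{pro:quad}; you unpack the step $(ii)\Rightarrow(iii)$ (equivalently your $(ii)\Rightarrow(i)$) more explicitly by identifying the lines that realise the extremal ray $r$.

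One small gap: your assertion that ``every other line is at most $2$-secant'' only treats lines \emph{not contained in} $S$, but on a smooth quadric there is a second ruling $f_1$ whose members lie in $S$ and are $b$-secant to $C$ (with possibly $b\ge 3$). Since $b\le a=n$ this does not spoil the conclusion that the maximal secancy equals $n$. In fact this upper bound is not even needed: for $(ii)\Rightarrow(i)$ it suffices that the $n$-secant family \emph{exists}, so that the absence of a $4$-secant (resp.\ $5$-secant) line forces $n\le 3$ (resp.\ $n\le 4$), placing us in case~$(i)$ (resp.\ $(i)$ or $(ii)$) of Proposition~\ref{pro:quad}. This is exactly how the paper reads $(ii)\Rightarrow(iii)$ off the proposition.
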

\begin{proof}
The proof is the same for a Fano or weak Fano threefold. Implication $(i)\Rightarrow (ii)$ follows from Lemma \ref{lem:badcurve}. Implications $(ii)\Rightarrow (iii)\Rightarrow (i)$ follow from Proposition~\ref{pro:quad}.
\end{proof}

\begin{exple}
\label{ex:plane}
If $C \subset P \subset \p^3$ is a smooth plane curve of degree $d$, we have $g = \frac{(d-1)(d-2)}{2}$. The blow-up of $C$ yields the following Sarkisov links.
\begin{enumerate}[(i)]
\item If $d=1$, the blow-up of the line $C$ produces a link of type I. 
$X$ is a $\cpd$-bundle over $\cpo$, where the fibres correspond to planes through $C$.
\item If $d=2$, the blow-up of the conic $C$, followed by the contraction of the plane $P$ on a smooth point, produces a link of type II. 
$Y$ is a quadric threefold.
\item If $d=3$, the blow-up of the plane cubic $C$, followed by the contraction of the plane $P$ on a terminal point (contraction of type E5), produces a link of type II. 
$Y$ is a singular Fano threefold.
\end{enumerate}
\end{exple}

\begin{exple}
\label{ex:quadric}
If $C \subset Q \subset \p^3$ is a smooth curve of bidegree $(a,b)$ in a smooth quadric surface $Q$, we have $g = (a-1)(b-1)$ and $d = a+b$. 
The blow-up of $C$ yields the following Sarkisov links.
\begin{enumerate}[(i)]
\item If $(a,b) = (1,2)$, $(g,d) = (0,3)$. $C$ is a twisted cubic, and $X$ is a $\cpo$-bundle over $\cpd$, where fibres correspond to bisecants to $C$. Link of type I.
\item If $(a,b) = (2,2)$, $(g,d) = (1,4)$. $C$ is the base locus of a pencil of quadrics, and $X$ is a del Pezzo fibration over $\cpo$. Link of type I.
\item If $(a,b) = (3,1)$, $(g,d) = (0,4)$.  We can contract the transform of $Q$ to a smooth rational curve to obtain $Y$ a smooth prime Fano threefold. Link of type II.

We compute now $-K_Y^3$ to determine the nature of $Y$.
Since the exceptional divisor of $X \to Y$ is $Q = \cpo \times \cpo$, the normal bundle of $\Gamma$ in $Y$ has the form $\No_{\Gamma/Y} = \Ol(a) \oplus \Ol(a)$.
We have $K_Y\cdot \Gamma = -2a-2$, and by Lemma \ref{lem:K^3}
$$(-K_Y)^3 = (-K_X)^3 -2K_Y\cdot \Gamma - 2 = (-K_X)^3  + 4a + 6.$$
Now if $l \subset Q$ is a contracted fibre, and $s \subset S$ is a fibre of the other ruling, we have $Q\cdot l = -1$ and $Q\cdot s = a$. 
Thus
$$K_X\cdot s = \sigma^* K_Y\cdot s + Q\cdot s = K_Y\cdot \Gamma + a = -a-2.$$
Since $C$ was a curve of bidegree $(3,1)$, we have $K_X \cdot s = -3$, hence $a = 1$. 
On the other hand $(-K_X)^3 = 62 - 8.4 = 30$, hence $(-K_Y)^3 = 30 + 4a+ 6 = 40$.
We conclude that $Y$ is an index 2 Fano threefold $V_5 \subset \p^6$.

\item If $(a,b) = (3,2)$, $(g,d) = (2,5)$. We can contract the transform of $S$ to a smooth rational curve to obtain $Y$ a smooth prime Fano threefold. Link of type II.

A similar computation as in the previous example yields $(-K_Y)^3 = 32$.
We conclude that $Y$ is an index 2 Fano threefold $V_4 \subset \p^5$, smooth intersection of two quadrics.

\item If $(a,b) = (3,3)$, $(g,d) = (4,6)$. We can contract the transform of $S$ to a terminal point (contraction of type $E3$). Link of type II.
\end{enumerate}
\end{exple}

\begin{exple}
\label{ex:quadriccone}
If $C \subset V \subset \p^3$ is a smooth curve in a quadric cone $V$, recall that blowing-up the singular point yields the Hirzebruch surface $\mathbb{F}_2$, whose Picard group is generated by $f_0$ and $s$, where $f_0$ is a fibre and $s$ the exceptional section of self-intersection $-2$.
On $\mathbb{F}_2$, $C$ is equivalent to $as+(2a+e)f_0$ for some $a,e\in \mathbb{N}$. 
\begin{enumerate}[(i)]
\item If $(a,e ) = (2,1)$, $(g,d) = (2,5)$, we can contract the transform of $V$ to a smooth rational curve to obtain a prime Fano $Y$. Link of type II.

A similar computation as in Example \ref{ex:quadric}(iv) yields $(-K_Y)^3 = 32$.
So again $Y$ is an index 2 Fano threefold $V_4 \subset \p^5$.

\item If $(a,e ) = (3,0)$, $(g,d) = (4,6)$.  We can contract the transform of $V$ to a terminal point (contraction of type $E4$). Link of type II. 
\end{enumerate}
\end{exple}

\section{Curves in cubics}
\label{sec:cubic}

Recall that any smooth cubic surface of $\p^3$ is the blow-up of $6$ points of $\p^2$ such that no $3$ are collinear and no $6$ are on a conic. 
More generally, if $\sigma\colon S\to \p^2$ is the blow-up of $6$ points, maybe some infinitely near, such that $-K_S$ is nef (or equivalently such that all curves of negative self-intersection are smooth rational $(-1)$-curves or $(-2)$-curves), the anticanonical map $\eta\colon S\to \p^3$ is a birational morphism to a normal cubic surface, which contracts all $(-2)$-curves of $S$ (it is an isomorphism if and only if $-K_S$ is ample, which means that $S$ is del Pezzo).

Conversely, all normal cubic surfaces except cones over smooth cubic curves  are obtained in this way.
Indeed such a surface $\hat S$ admits only double points, is rational (project from a singularity) and satisfies $H^1(\hat S,\mathcal O_{\hat S}) = H^2(\hat S,\mathcal O_{\hat S}) = 0$ (use the exact sequence $0 \to \mathcal O(-3) \to \mathcal O \to \mathcal O_{\hat S} \to 0$); hence by \cite[Proposition 8.1.8 (ii)]{Dolgachev} $\hat S$ admits only rational double points. Alternatively one can use the classification of singularities on cubic surfaces in 
\cite[\S 2]{BW}, which does not rely on a cohomological argument. 
Then the minimal resolution $S \to \hat S$ is a weak del Pezzo surface and is the blow-up of $6$ points of $\p^2$ by \cite[Theorem 8.1.13]{Dolgachev}.
For more details the reader can consult \cite[\S 8.1]{Dolgachev} and \cite{Demazure}.

\begin{setup}
\label{setup:cubic}
In this section we consider $C\subset \hat S \subset \p^3$ a smooth curve of genus $g$ and degree $d$ which is contained in a normal rational cubic $\hat S$, but not in a quadric or a plane.

We denote by  $\eta\colon S\to \hat S$ the minimal resolution of singularities of $S$, by $\sigma \colon S\to \p^2$ a birational morphism, which factorises as $\sigma=\sigma_1\circ\dots\circ\sigma_6$, where $\sigma_i$ is the blow-up of a point $p_i$. We denote by $E_i\in\Pic{S}$ the total transform of $\sigma_i^{-1}(p_i)$ on $S$, and by $L\in \Pic{S}$ the pull-back of a general line, so that $\Pic{S}=\z L\oplus \z E_1\oplus \dots\oplus \z E_6$, and that the intersection form is the diagonal $(1,-1,\dots,-1)$. 

Since $C$ is not a line on $\p^3$, we have $C\sim k L-\sum m_i E_i$ on $S$, where $k>0$ is the degree of the image of $C$ on $\p^2$, and $m_1,\dots,m_6\ge 0$ are the multiplicities at the points $p_1,\dots,p_6$. The hyperplane section on $\hat S$ is anti-canonical (the map $S\to \hat S$ is given by $\lvert -K_{S}\rvert$), so $d=C\cdot (-K_{S})$.
Since $C$ is smooth on $\p^3$, it is also smooth on $S$, and we have
$-2+2g=C^2+C\cdot K_{S}=C^2-d$ on $S$. This gives the following equalities:
\begin{align*}
d &= 3k-\sum_{i=1}^6 m_i, \\
g &= \frac{(k-1)(k-2)}{2}-\sum_{i=1}^6 \frac{m_i(m_i-1)}{2}.
\end{align*}

Since we assume that $C\subset \p^3$ is not contained in any quadric, we have that $-2K_{S}-C$ is not effective on $S$ (recall that $-K_S$ is the trace of an hyperplane section, so  $-2K_S-C$ is effective if and only if $C$ is contained in a quadric).
Up to reordering, we can also choose that $m_1\ge m_2\ge m_3\ge m_4\ge m_5\ge m_6$, that $p_1\in \p^2$ and that for $i\ge 2$ either $p_i\in \p^2$ or is infinitely near to $p_{i-1}$. We can moreover assume that $k\ge m_1+m_2+m_3$. 
Indeed, if $k<m_1+m_2+m_3$, the three points $p_1,p_2,p_3$ are not collinear and there exists a quadratic map $\p^2\dasharrow \p^2$ having base-points at $p_1,p_2,p_3$ (this is because there is no $(-3)$-curve on  $\hat{S}$); replacing $\sigma\colon S\to\p^2$ with its composition with the quadratic map replaces $k$ with  $2k-m_1-m_2-m_3$, which is strictly less than $k$.  Reordering the points if needed and continuing this process again if $k<m_1+m_2+m_3$, we end up in a case where $k\ge m_1+m_2+m_3$.
\end{setup}

\begin{prop}
\label{pro:smoothcubic}
Let $C\subset \hat S \subset \p^3$ be a smooth curve of genus $g$ and degree $d$ which is contained in a rational normal cubic, but not in a quadric or a plane.

Let $\pi\colon X\to \p^3$ be the blow-up of $C$. The divisor $-K_X$ is big, it is nef if and only if $C$ does not admit any $5$-secant line. We note $\overline{\NE}(X) = \langle f, r\rangle$, where $f$ is a $\pi$-exceptional curve. Assuming Set-up {\rm \ref{setup:cubic}}, we are in one of the following cases:

\begin{enumerate}[$(i)$]
\item
$-K_X$ is ample;  $r=l-3f$ is generated by $3$-secant lines and we have one of the $4$ following cases:
$$\begin{array}{|c|c||c|c|c|c|}
\hline
g & d& k & (m_1,\cdots, m_6) &  \dim \lvert -K_X\rvert\ge  \\
\hline
1 & 5 &3 & (1,1,1,1,0,0) &  14 \\
3 & 6 &4 & (1,1,1,1,1,1) &  12\\
5 & 7 &6 & (2,2,2,2,2,1) &  10\\
10 & 9& 9 & (3,3,3,3,3,3)  & 7\\
\hline
\end{array}$$ 

\item $-K_X$ is nef but not ample, and the anticanonical morphism is a small contraction; curves proportional to $r=l-4f$ are combination of a finite number of $4$-secant lines of $C$ in $\p^3$, all being in $\hat S$, and we have one of the $9$ following cases \parent{the number of $4$-secants is counted with multiplicity, if $\hat S$ is not smooth, some $4$-secants can count twice or more}:
\label{tab:cubicMovableNef}
$$\begin{array}{|c|c||c|c|c|c|}
\hline
g & d &k & (m_1,\cdots, m_6) &  \text{$\#$ of $4$-secants}&  \dim \lvert -K_X\rvert\ge \\
\hline
0 & 5 &2 & (1,0,0,0,0,0) &  1 &13\\
0 & 6 &2 & (0,0,0,0,0,0)  &  6 &9\\
1 & 6 &3 & (1,1,1,0,0,0) &  3 &10\\
2 & 6 &4 & (2,1,1,1,1,0) &  1& 11\\
3 & 7 &4 & (1,1,1,1,1,0) &  5& 8\\
4 & 7 &5 & (2,2,1,1,1,1) &  2& 9\\
6 & 8 &6 & (2,2,2,2,1,1) &  5& 7 \\
7 & 8 &7 & (3,2,2,2,2,2) &  1& 8\\
9 & 9 &7 & (2,2,2,2,2,2) & 6 & 6\\
\hline
\end{array}$$ 
Moreover, if $\hat{S}$ is smooth, the $4$-secant lines correspond to the conic of $\cpd$ that passes through the last five points, to other conics through $5$ points with similar multiplicities, and in the case  $(g,d) = (6,8)$ to the line through the two points of multiplicity $1$.
\item  $-K_X$ is nef but not ample and the anticanonical morphism is a divisorial contraction; $r=l-4f$,  the curves proportional to $r$ cover the surface $\hat S$ and we have one of the following $3$ cases \parent{same remark as in $(ii)$ for the number of $4$-secants}:
$$\begin{array}{|c|c||c|c|c|c|}
\hline
g & d & k & (m_1,\cdots, m_6) &  \text{$\#$ of $4$-secants} \\
\hline
5 & 8 &6 & (2,2,2,2,2,0) &  10\\
12 & 10 &9 & (3,3,3,3,3,2) & 10\\
19 & 12 &12 & (4,4,4,4,4,4) &  27\\
\hline
\end{array}$$

\item $-K_X$ is not nef, and there is a $5$-secant line to $C$ in $\p^3$.
\end{enumerate}
\end{prop}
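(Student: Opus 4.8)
The plan is to transport the whole computation to the weak del Pezzo surface $S$ resolving $\hat S$, and to read the cone $\overline{\NE}(X)$ off the cone of curves of $S$. First I would check bigness of $-K_X$ exactly as in Proposition~\ref{pro:quad}: since $C$ lies on the cubic $\hat S$, the products of $\hat S$ with planes, and more generally the surfaces of the form $m\hat S + \Sigma$ with $\deg \Sigma = m$, are quartics of degree $4m$ with multiplicity at least $m$ along $C$, so $h^0(-mK_X)$ grows like $m^3$ and $-K_X$ is big. Next, writing $T = 3H - E \in \Pic{X}$ for the strict transform of $\hat S$, any irreducible curve $\Gamma \equiv ml - n'f$ with $\Gamma \not\subset T$ satisfies $\Gamma \cdot T = 3m - n' \ge 0$, hence $n' \le 3m$. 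Thus every curve of secant ratio $n'/m > 3$ is contained in $T$, and the second extremal ray of $\overline{\NE}(X)$ (other than $f$) must be sought among curves coming from $S$.

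The key computation is then on $S$. Using that a hyperplane section of $\hat S$ is anticanonical and that $E$ meets $T$ along $C$, one gets $H|_{T} = -K_S$ and $E|_{T} = [C]$, so an effective class $D = \alpha L - \sum \beta_i E_i$ on $S$ maps to $D \equiv (D\cdot(-K_S))\, l - (D\cdot C)\, f$ in $N_1(X)$; that is, its $\p^3$-degree is $m = D\cdot(-K_S) = 3\alpha - \sum \beta_i$ and its secant number is $n' = D\cdot C = k\alpha - \sum m_i \beta_i$. Since $\overline{\NE}(S)$ of a weak del Pezzo surface is generated by its $(-1)$- and $(-2)$-curves, and the $(-2)$-curves meet $C$ trivially (so contribute nothing with $m=0$), the ratio $n'/m$ is maximised on a $(-1)$-curve, i.e.\ a line $\ell \subset \hat S$ with $\ell\cdot(-K_S) = 1$. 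Setting $n = \max_{\ell} \ell\cdot C$ over the $27$ line classes, I would conclude $r = l - nf$ and $\overline{\NE}(X) = \langle f,\, l-nf\rangle$, the inequality $n\ge 3$ (verified in the enumeration) making this ray dominate the non-$T$ curves.

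The trichotomy then follows from Lemma~\ref{lem:badcurve}: $-K_X$ is ample iff $n \le 3$, nef iff $n \le 4$, and fails to be nef exactly when $n \ge 5$, i.e.\ when $C$ has a $5$-secant line, which proves the stated nef criterion. When $n = 4$ the anticanonical morphism contracts precisely the $(-K_X)$-trivial curves $l - 4f$, the $4$-secant lines; to decide between a small and a divisorial contraction I would compute $-K_X|_T = (12-k)L - \sum (4-m_i) E_i$ and its self-intersection $(12-k)^2 - \sum (4-m_i)^2$. Nefness of $-K_X$ forces this number to be $\ge 0$; if it is $> 0$ then $-K_X|_T$ is big and contracts only finitely many curves, so the morphism on $X$ is small (case $(ii)$), whereas if it is $=0$ the surface $T$ is swept out by a pencil of $4$-secants and contracted to a curve, giving a divisorial contraction (case $(iii)$). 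The number of $4$-secants is the number of the $27$ line classes with $\ell\cdot C = 4$, read off class by class together with their description on $\p^2$.

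Finally the tables are produced by the finite search over $(k; m_1 \ge \dots \ge m_6 \ge 0)$ satisfying $d = 3k - \sum m_i$, $g = \binom{k-1}{2} - \sum \binom{m_i}{2}$, the normalisation $k \ge m_1 + m_2 + m_3$, the condition that $-2K_S - C$ be non-effective (that $C$ lies on no quadric), and $4d-30 \le g \le 14$; each surviving tuple is classified by computing $n$. I expect the main obstacle to be twofold. First, justifying that the maximal secant ratio is genuinely realised by a line rests on the precise description of $\overline{\NE}(S)$ for a weak del Pezzo of degree $3$ and on identifying the relevant line classes. Second, and more delicate, is the singular cubic case: there $T$ is no longer isomorphic to $S$, the resolution $\eta$ contracts $(-2)$-curves, and the secant numbers must be counted on $\hat S$ with multiplicity --- this is exactly the parenthetical warning that a single $4$-secant may count twice or more --- so one must track how the intersection points of $C$ with a line collide under $\eta$. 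The remaining enumeration and the class-by-class secant counts are routine.
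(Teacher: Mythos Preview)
Your approach is essentially the paper's: both establish bigness from the reducible quartics containing $\hat S$, confine any curve of secant ratio $>3$ to the cubic, bound the ratio by $\max_\ell(\ell\cdot C)$ over the $27$ line classes on $S$, and conclude via the same finite enumeration. The paper makes the bound slightly more concrete by testing only the three classes $E_1$, $L-E_5-E_6$, $2L-\sum_{i\ge 2}E_i$, which under the normalisation of Set-up~\ref{setup:cubic} realise the maximum among the $27$; you instead invoke the full structure of $\overline{\NE}(S)$. The substance is the same.

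One point to correct: your assertion that ``the $(-2)$-curves meet $C$ trivially'' is false. If $C$ passes through a singular point of $\hat S$, its strict transform on $S$ meets the exceptional $(-2)$-curve positively. What is true is only that such a curve has $m=R\cdot(-K_S)=0$. This matters: in a decomposition $\tilde\Gamma=\sum b_iR_i+\sum a_jL'_j$ in $\overline{\NE}(S)$ using irreducible $(-1)$- and $(-2)$-curves, a term with $R_i\cdot C>0$ contributes to $n'$ but not to $m$, so the bound $n'\le nm$ does not follow directly. The paper's version absorbs this by decomposing into the $27$ line \emph{classes} (still effective on the weak del Pezzo, though some are reducible $(-1)+(-2)$ chains), invoking Mori's Cone Theorem on $S$; even there the singular case is terse. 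You are right to flag this as the delicate step, but the specific reason you give is not the correct one.

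Your numerical criterion $(-K_X|_T)^2=(12-k)^2-\sum(4-m_i)^2$ to separate the small from the divisorial anticanonical contraction is a genuine addition: the paper simply records the outcome of the enumeration without stating such a criterion. One checks directly that this quantity vanishes exactly in the three cases of $(iii)$ and is positive in the nine cases of $(ii)$.
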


\begin{rem}Each case occurs in a smooth cubic: we leave the verification as an exercise to the reader (which can be done along the same lines as in Remark \ref{rem:1511} below).
Note that each case also occur on some singular cubics, depending on the singularity of the cubic, or equivalently on the position of the points blown-up by $\sigma\colon S\to \p^2$.

In case $(ii)$, when we speak about the conic passing through the five last points, we speak about the divisor $2L-E_2-E_3-\dots-E_5$ on $S$, which is in general the strict transform of the conic passing through $p_2,\dots,p_5$; if this divisor corresponds to a reducible curve (which is possible when $\hat S$ is singular), the image of this curve is also a $5$-secant line in $\hat S\subset\p^3$.
\end{rem}

\begin{proof}
We use the notation of Set-up \ref{setup:cubic}.

As in the case of curves contained in a quadric, it is clear that $-K_X$ is big, since $\lvert -K_X\rvert$ gives a birational map (it contains the reducible linear system generated by the quartics containing~$S$).

Suppose that one (effective) divisor $D\in \{E_1,L-E_5-E_6,2L-E_2-E_3-E_4-E_5-E_6\}\subset \Pic{S}$ satisfies $D\cdot C\ge 5$ in $S$. 
If $\hat S$ is smooth, $D$ is a $(-1)$-curve of $S=\hat S$, which is a $5$-secant of $D$, implying that $-K_X$ is not nef. 
If $\hat{S}$ is not smooth, $D$ is either a $(-1)$-curve, or a reducible curve consisting of one $(-1)$-curve connected to a chain of $(-2)$-curves. 
Since $D\cdot (-K_{S})=1$, the image of the reducible curve is a line of $\p^3$, which intersects $D$ into  $5$ points, which is a $5$-secant, implying that $-K_X$ is not nef. This gives case $(iv)$. 

We can thus assume that $D\cdot C\le 4$ for any $D\in \{E_1,L-E_5-E_6,2L-E_2-E_3-E_4-E_5-E_6\}$, which implies that $k-m_5-m_6\le 4$, $2k-m_2-m_3-m_4-m_5-m_6\le 4$ and $m_1\le 4$. In particular, $m_i\le 4$ for $i=1,\dots,6$. This gives finitely many possibilities for $(k,m_1,\dots,m_6)$, all listed in cases $(i)$, $(ii)$, $(iii)$. 


We now show that these cases give in fact $-K_X$ nef.
In all cases listed, Set-up~\ref{setup:cubic} implies that either $L-E_5-E_6$, $2L-E_2-E_3-E_4-E_5-E_6$ or $E_1$ realises the biggest intersection with $C$ among all $27$  divisors $E_i$, $L-E_i-E_j$, $2L-E_1-E_2-E_3-E_4-E_5-E_6+E_i$. Recall that each of these $27$ divisors $D$ gives a line in $\p^3$, intersecting $C$ into $C\cdot D$ points; the $27$ lines are distinct if $\hat{S}$ is smooth, but not if $\hat{S}$ is singular. An easy check shows that the line with the biggest intersection is equivalent to $l-af$ on $X$, where $a=3$ in case $(i)$ and $a=4$ in cases $(ii)$, $(iii)$.

We now prove that it is the extremal ray $r$. This will imply that $-K_X$ is ample in case $(i)$ and nef but not ample in cases $(ii)$, $(iii)$.
Suppose for contradiction that $r=ml-nf$ with $n>am$ is the strict transform of a curve $\Gamma$ of degree $m$ passing $n$ times through $C$, its intersections with the cubic $\hat S$ is $3m\le am<n$, so the curve is contained in the cubic. 
On $S$ the curve $\Gamma$ is equivalent to a sum of $m$ divisors among the $27$  divisors $E_i$, $L-E_i-E_j$, $2L-E_1-E_2-E_3-E_4-E_5-E_6+E_i$: This is a consequence of Mori's Cone Theorem (see \cite[Theorem 3.7]{KM}), since $-K_X\cdot\Gamma < 0$ and since the extremal rays of the cone of curves of $S$ negative against $-K_S$ correspond exactly to the lines in $\hat{S}$. 
Each of the $27$ divisors intersecting at most $a$ times $C$, we get the contradiction.
\end{proof}

\begin{rem} \label{rem:1511}
There is one particularly interesting example that falls under case $(iv)$ of Proposition \ref{pro:smoothcubic}, namely the case $(15,11)$.
By Lemma \ref{lem:hypersurfaces}, we know that a curve of type $(15,11)$ is contained in a cubic surface, which must be unique since the degree of the curve is greater than 9.
It is easy to construct such a curve $C$ on a smooth cubic: consider the transform of a curve of degree 10 on $\cpd$ with multiplicities $(4,3,3,3,3,3)$ at the six blown-up points. 
Note that such irreducible curves do exist: the linear system $\lvert C \rvert$ contains the transform of many reducible members, such as 5 conics (4 passing through 4 points, and 1 through 3 points), in particular these reducible members are connected and do not admit a common base point, hence by Bertini's Theorem the general member $C$ is smooth and irreducible. 
Observe that the conic through the five points of multiplicities 3 corresponds to a 5-secant line to $C$: we recover the fact that $-K_X$ is not nef.

More generally, if $C$ is \textit{any} smooth curve of type $(15,11)$, by intersecting the (possibly singular) cubic containing $C$ with one of the irreducible quartics containing $C$ (which does exist by a dimension count, using Lemma \ref{lem:hypersurfaces}), we obtain a residual line $L$ which must be a 5-secant to $C$, since $g(C \cup L) = 19$ (see formulas in \S \ref{sec:linkage}). 
In particular, even for $C$ a general element of $\mathcal{H}^S_{15,11}$, the anticanonical divisor of the blow-up $X$ of $C$ is never nef. 

Note that as a consequence the construction in \cite[Example 7 (1) and Table 1, case 28]{Kal} given as a candidate to be a Sarkisov link from $\cpt$ to $X_{10} \subset \mathbb{P}^7$ in fact never happens. Precisely, the starting assumption \textit{``Let $C \subset \cpt$ be a nonsingular curve that is an intersection of nonsingular quartic
surfaces with $(p_a(C), deg C) = (15, 11)$''} is never fulfilled, since by Bezout's Theorem any quartic containing $C$ must also contains the 5-secant line to $C$. 
\end{rem}

\begin{rem} \label{rem:1210}
We can make a similar discussion about case $(12,10)$. By Lemma \ref{lem:hypersurfaces} such a curve is contained in a cubic surface, which must be unique. By taking the residual curves of the intersection of this cubic with a pencil of general quartics containing $C$, we obtain a pencil of $8$-secant conics to $C$, hence the anticanonical morphism always correspond to a divisorial contraction. 

This contrasts with the case $(5,8)$: we will see that a general curve in $\mathcal{H}^S_{5,8}$ does not lie on a cubic, and yields a weak Fano $X$ with a small anticanonical morphism. 
\end{rem}

\begin{rem} \label{rem:1912}
Note that the case $(19,12)$ always corresponds to the complete intersection of a cubic and a quartic. 
To see that such a curve $C$ is contained in a cubic, we modify slightly the argument of the proof of Lemma \ref{lem:hypersurfaces} as follows.
We have $\deg K = 2g-2 = 36 = \deg D$, so $l(K-D) = 1$ and $\dim \lvert D \rvert = 3d - g + 1 = 18$. Since $N - 1  = 19$, we find a (necessarily unique) cubic surface containing $C$. 

Then by a dimension count, using  Lemma \ref{lem:hypersurfaces}, we see that $C$ is also contained in an irreducible quartic. As a consequence any curve of degree $n$ in the cubic surface containing $C$ is $4n$-secant to $C$.
\end{rem}

\begin{exple} \label{ex:linkFano}
If $\hat S$ is smooth, the four cases in Proposition \ref{pro:smoothcubic}(i) correspond to the following Sarkisov links:
\begin{enumerate}[(i)]
\item Case $(g,d) = (1,5)$ is a link of type II to the quadric $Q \subset \cpq$ (blow-down to a curve $(1,5)$ in $Q$): see \cite[p. 117]{MM} or \cite[Example 3 (L.4)]{pan:2011}.
\item Case $(g,d) = (3,6)$ corresponds to the classical general cubo-cubic transformation of $\cpt$. 
This is the only example of a link of type II from $\cpt$ to $\cpt$ where the isomorphism in codimension 1 is in fact an isomorphism: see \cite{Katz}. 
The transformation contracts a surface of degree 8 which is triple along $C$, and which is swept out by the 3-secant lines to $C$.
\item Case $(g,d) = (5,7)$ corresponds to a link of type I, with a resulting conic bundle whose fibres correspond to 6-secant conics to $C$: see \cite[Remark 2(ii)]{Isko-duke}, \cite[Example 6, (a)]{PanOld} or \cite[Appendix A]{CS}. Note that by this last reference there is no need for a genericity assumption on $C$. 
\item  Case $(g,d) = (10,9)$ corresponds to the complete intersection of two cubic surfaces, and so $X$ is a fibration in del Pezzo surfaces of degree~$3$. 
\end{enumerate}
\end{exple}

\begin{exple} \label{ex:linkcubic}
By Proposition \ref{pro:link}, the nine cases in Proposition \ref{pro:smoothcubic}(ii) correspond to Sarkisov links involving a flop.
When $\hat S$ is smooth, all have already been described in the literature:
\begin{enumerate}[(i)]
\item 
In cases $(g,d) = (1,6), (4,7), (7,8)$ we remark that $d$ plus the number of 4-secant lines is equal to 9. So we have a pencil of cubics with base locus equal to the union of $C$ and the 4-secant lines. After blow-up and flop, we obtain a del Pezzo fibration of degree $6,5,4$ respectively. See \cite[Proposition 6.5(27-28-29)]{JPR2}.
\item 
In case $(g,d) = (2,6)$, after blow-up and flop of the unique 4-secant line we obtain a conic bundle: see \cite[Theorem 7.14(16)]{JPR2}. One can verify, considering the residual curves of a pencil of cubics containing $C$ (hence also the unique 4-secant line), that the fibres correspond to 6-secant conics to $C$: This case is a degenerate version of Example \ref{ex:linkFano}(iii). 
\item 
Case $(g,d) = (0,5)$ corresponds to a link from $\cpt$ to itself with flop of the unique 4-secant line: see \cite[Proposition 2.9]{CM}. This case was known classically, see for instance \cite[first case in Table page 185]{SR}. This is a degenerate version of Example \ref{ex:linkFano}(ii), and again the contracted divisor corresponds to the surface swept out by the 3-secant lines to $C$.
\item 
In case $(g,d) = (0,6)$, after the flop of the six 4-secant lines we can contract the transform of $S$, isomorphic to $\p^2$, on a smooth point, obtaining a Fano threefold $Y_{22}$ of genus 12. See \cite[(2.8.1)]{T}.
\item 
In case $(g,d) = (3,7)$, after the flop of the five 4-secant lines we can contract the transform of $S$, isomorphic to $\mathbb{F}_1$, on a line in a Fano threefold $Y_{16}$ of genus 9. See \cite[(6.1)]{Isk} or \cite[\S 2]{KPZ}.
\item \label{case:68}
In case $(g,d) = (6,8)$, after the flop of of the five 4-secant lines we can contract the transform of $S$, isomorphic to $\p^1\times\p^1$, on a terminal point (contraction of type $E3$).
We have $(-K_X)^3 = 10$ by Lemma \ref{lem:K^3}, hence after the $E3$ contraction we obtain a terminal Fano threefold $Z$ with $(-K_Z)^3 = 12$.  
See \cite[3.2, case 4]{CM}. 
\item 
In case $(g,d) = (9,9)$, after the flop of of the six 4-secant lines we can contract the transform of $S$, isomorphic to $\p^2$, on a terminal point (contraction of type $E5$); the terminal Fano $Z$ that we obtain is such that $(-K_Z)^3=\frac{21}{2}$. 
See \cite[3.3, case 3]{CM}.
\end{enumerate}
\end{exple}

\begin{exple} \label{ex:linkcubicsingular}
All cases of Proposition \ref{pro:link} exist on singular cubics. We describe 
one among many other interesting cases.

In case  $(g,d) = (6,8)$, pick $p_1,\dots,p_5\in \p^2$, $p_6$ infinitely near to $p_5$, such that no $3$ of the $p_i$ are collinear, and no $6$ are on a conic. 
Then $\hat S$ has an unique double point, corresponding to the image of the effective divisor $E_5-E_6\in S$. After the flop of the five $4$-secant lines, we can contract the transform of $\hat S$, isomorphic to a quadric cone, on a terminal point (contraction of type $E4$). This was a case left open in \cite[3.2]{CM}. Note that this case is numerically (but not geometrically) equivalent to Example \ref{ex:linkcubic}(\ref{case:68}); in particular the cube of the anti-canonical divisor is again $12$.

\end{exple}

\begin{prop} \label{pro:antinefcubic}
Let $C\subset \hilb$ be a curve of genus $g$ and degree $d$ contained in a smooth quartic surface. 
Assume that $(g,d)$ is one of the cases listed in Table~{\rm \ref{tab:bignefmovable}}, third column; so in particular $C$ is contained in a cubic surface. 

Then the  blow-up $X$ of $C$ has nef anticanonical divisor if and only if there is no $5$-secant line to $C$. 

In particular, for any such pair $(g,d)$ there exists a non-empty Zariski open set  $U \subset \hilb$ such that the  blow-up $X$ of $C \in U$ is weak Fano.
\end{prop}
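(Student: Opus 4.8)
\emph{Strategy.} The plan is to deduce the equivalence from the surface-by-surface analysis already carried out, reducing to Proposition~\ref{pro:smoothcubic} (curves on a normal rational cubic) and to Corollary~\ref{coro:planeQuadricBN} (curves on a quadric or plane), and then to establish the genericity statement separately. The easy implication ``$-K_X$ nef $\Rightarrow$ no $5$-secant line'' is immediate from Lemma~\ref{lem:badcurve}: a line meeting $C$ in at least $5$ points has strict transform numerically equal to $l-nf$ with $n\ge 5>4$, which is excluded when $-K_X$ is nef. For the converse I would argue contrapositively. Assuming $-K_X$ is not nef, Lemma~\ref{lem:badcurve} produces an irreducible curve $\Gamma\equiv ml-nf$ with $n>4m$. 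Since $C$ lies on a cubic surface $\hat S$ (Lemma~\ref{lem:hypersurfaces}, or Remark~\ref{rem:1912} for $(19,12)$) and $n>4m\ge 3m=\Gamma\cdot\hat S$ by Bezout, $\Gamma$ is forced onto $\hat S$; likewise, since $C$ lies on a smooth quartic $Q$ and $n>4m=\Gamma\cdot Q$, we get $\Gamma\subset Q$. Thus $\Gamma$ is an irreducible component of the residual curve $C'$ of the $[3,4]$-linkage $C\cup C'=\hat S\cap Q$, of degree $12-d$ and arithmetic genus $g-3(d-6)$ by the formulas of \S\ref{sec:linkage}.

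\emph{Invoking the cubic analysis.} To conclude that $\Gamma$ is in fact a line with $n\ge 5$, I match the situation with Proposition~\ref{pro:smoothcubic}: there the extremal ray is always of the form $l-af$ with $a\le 4$, and nefness fails exactly when $C$ has a $5$-secant line. If $C$ happens to lie on a quadric or a plane, the desired equivalence is precisely Corollary~\ref{coro:planeQuadricBN} and we are done. If $C$ lies on no quadric or plane, then $\hat S$ is irreducible (an irreducible $C$ not on a plane or quadric cannot sit on a reducible cubic), and it remains only to check that $\hat S$ is normal and is not a cone over a smooth plane cubic, so that the equivalence is Proposition~\ref{pro:smoothcubic} verbatim.

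\emph{The main obstacle.} Excluding the degenerate cubics is the delicate point. A cone over a smooth cubic curve carries only curves of genus $\ge 1$ and, together with the presence of the smooth quartic $Q$, imposes constraints on $(g,d)$ that I would rule out against the cubic column of Table~\ref{tab:bignefmovable}; a non-normal cubic is singular along a line $\ell_0$, and I expect $\ell_0$ (or a ruling of the associated scroll) to be a $5$-secant to $C$, contradicting the hypothesis. So in either case the degenerate cubics are either excluded by the numerics or produce the desired $5$-secant directly. The complete-intersection pair $(19,12)$ has empty residual $C'$ and is settled by Remark~\ref{rem:1912}, where every degree-$n$ curve in the cubic is exactly $4n$-secant, so $-K_X$ is nef and there is no $5$-secant line, making the equivalence hold trivially.

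\emph{Genericity.} For the final clause I would show that the general $C\in\hilb$ satisfies the hypotheses and has no $5$-secant line. Having a $5$-secant line is a closed condition on $\hilb$ (it is the image under the projection to $\hilb$ of the incidence variety of pairs consisting of a curve and a $5$-secant line), and the tables of Proposition~\ref{pro:smoothcubic}(i)--(iii), in which every cubic-column pair appears, exhibit explicit curves on a smooth cubic with no $5$-secant line; hence this open locus is non-empty. That the general curve lies on a smooth quartic follows from Mori's Proposition~\ref{pro:mori1} (valid since $8g<d^2$ for every cubic-column pair except $(19,12)$, which is a complete intersection and so visibly lies on a smooth quartic) together with the irreducibility of the relevant component of $\hilb$. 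For such a general $C$ the cubic is smooth, hence normal and rational, so Proposition~\ref{pro:smoothcubic} places us in cases (i)--(iii), where $-K_X$ is nef; combined with the bigness of $-K_X$ from the same proposition (equivalently $(-K_X)^3=62-8d+2g>0$ since $4d-30\le g$, by Lemma~\ref{lem:K^3}) this gives that $X$ is weak Fano on a non-empty open subset $U\subset\hilb$.
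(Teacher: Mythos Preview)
Your approach is genuinely different from the paper's, and the gap you yourself flag as ``the delicate point'' is a real one.

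\textbf{The paper's route.} The paper does \emph{not} reduce to Proposition~\ref{pro:smoothcubic}. Instead it works entirely on the smooth quartic $S$ that is assumed to contain $C$. If $-K_X$ is not nef, pick an irreducible $(4n{+}1)$-secant curve $\Gamma$ of degree $n$; by B\'ezout $\Gamma\subset S$ and $\Gamma$ lies in every cubic through $C$, so $n\le 12-d$. If $n=12-d$, the linkage formulas of \S\ref{sec:linkage} give $C\cdot\Gamma\le 3n+2\le 4n$, a contradiction. If $n<12-d$, then $C\cup\Gamma\subset S$ is not a complete intersection of $S$ with another surface, so Mori's Hodge-index inequality (Proposition~\ref{pro:mori}) applies to $C\cup\Gamma$: writing $P_{g,d}(n)=n^2-2(16-d)n+d^2-8g$, one gets $8g(\Gamma)<P_{g,d}(n)$, and a direct check of the polynomial for each cubic-column $(g,d)$ forces $n=1$. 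Thus $\Gamma$ is a $5$-secant line. The genericity clause then follows immediately from Proposition~\ref{pro:smoothcubic} (which produces, for every cubic-column pair, curves with $-K_X$ nef; these automatically lie on a smooth quartic by Proposition~\ref{pro:insmoothquartic}).

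\textbf{Where your argument breaks.} Your reduction to Proposition~\ref{pro:smoothcubic} requires the cubic $\hat S$ to be normal and rational, and you do not establish this. Concretely: a smooth curve on a non-normal cubic satisfies $g=\tfrac{(k-1)(2d-3k-2)}{2}$ for some $k\ge 0$ (cf.\ \cite[(2.7)]{nasu}), and for instance $(g,d)=(3,7)$ \emph{does} satisfy this with $k=2$ or $3$; so such a $C$ can genuinely sit on a non-normal cubic, and your expectation that the singular line $\ell_0$ is always a $5$-secant is unjustified (there is no reason the multiplicity of $C$ along $\ell_0$ should exceed $4$). Likewise, your claim that a cone over a smooth plane cubic ``carries only curves of genus $\ge 1$'' is at best incomplete (the rulings are rational), and even granting it, you still have all the genus $\ge 1$ entries of the cubic column to handle. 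One could try to salvage the approach by arguing that \emph{some} cubic through $C$ is normal rational (there is a pencil when $d\le 9$), or by a separate analysis of smooth curves on non-normal cubics and cubic cones mimicking Section~\ref{sec:quad}; but that is substantial extra work you have not done.

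\textbf{What each approach buys.} Your strategy would recycle the case analysis already performed for cubics, which is appealing; but it trades one verification for another (the structure of the cubic). The paper's approach exploits the one hypothesis you have for free---a \emph{smooth} quartic through $C$---and turns the question into a short polynomial check via Proposition~\ref{pro:mori}, uniformly across all cubic-column pairs and with no assumption on the cubic surface at all. Your genericity paragraph is essentially correct but over-engineered: once the equivalence is known, non-emptiness is immediate from the explicit curves in Proposition~\ref{pro:smoothcubic}(i)--(iii), and Proposition~\ref{pro:insmoothquartic} (rather than Proposition~\ref{pro:mori1} plus irreducibility) already puts those curves on a smooth quartic.
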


\begin{proof}
Suppose that $-K_X$ is not nef, which implies the existence of an irreducible curve $\Gamma$ of degree $n \ge 1$ which intersects at least $4n+1$ times $C$.
Note that $\Gamma$ must be inside all quartics (and \textit{a fortiori} cubics) containing $C$, so $\deg(\Gamma) \le 12 - d$.

Assume first that $\deg(\Gamma) = 12 - d$, which means that $\Gamma \cup C$ is a complete intersection of a cubic and a smooth quartic $S$.
Inside $S$ we have $(C+\Gamma)^2 = 36$ and $C^2 = 2g - 2$, $\Gamma^2 = 2g(\Gamma) - 2$. 
By the formula from \S \ref{sec:linkage}, we have $g - g(\Gamma) = \frac32(d - \deg(\Gamma))$, hence  
\begin{align*}
g(\Gamma) &= 18 + g - 3d; \\
g &= 18 + g(\Gamma) - 3d(\Gamma) 
\end{align*}

Inside the quartic $S$ we have:
\begin{align*}
C\cdot \Gamma &= \frac{36 - (2g-2) - (2g(\Gamma) - 2)}{2} \\
&=20-g-g(\Gamma)\\
&= 3\deg(\Gamma) + 2 - 2g(\Gamma)\\
& \le 3\deg(\Gamma) + 2.
\end{align*}
Since $12 - d = \deg(\Gamma) \ge 2$ in the cases under consideration, we obtain
$$C\cdot \Gamma  \le 4\deg(\Gamma).$$

Suppose now that $n=\deg(\Gamma)<12-d$, which means that $\Gamma \cup C$ is contained in the smooth quartic $S$ but is not a complete intersection of $S$ with another surface (since $(g,d) \neq (9,8)$ or $(6,4)$). We can apply Proposition~$\ref{pro:mori}$, and get $8g(\Gamma \cup C)<(n+d)^2$, where $g(\Gamma \cup C)\ge g(C)+g(\Gamma)+4n$, which yields
$$8g(\Gamma)<(n+d)^2-8g-32n=n^2-2(16-d)n+d^2-8g.$$

We call $P_{g,d}$ the polynomial $n^2-2(16-d)n+d^2-8g\in \mathbb{Z}[n]$. 
We find the following polynomials for the pairs $(g,d)$ under consideration.
$$\begin{array}{|c|c||l|c|}
\hline
g & d & P_{g,d}  \\
\hline
0 & 5 & (n - 2) (n - 20) - 15 \\
0 & 6 & (n - 2) (n - 18) \\
1 & 5 & (n - 1) (n - 21) - 4 \\
1 & 6 & (n - 2) (n - 18)-8 \\
2 & 6 & (n - 2) (n - 18) - 16 \\
3 & 6 & (n - 1) (n - 19) - 7 \\
3 & 7 & (n - 2) (n - 16) - 7 \\
\hline
\end{array}
\qquad
\begin{array}{|c|c||l|c|}
\hline
g & d & P_{g,d}  \\
\hline
4 & 7 & (n - 1) (n - 17) \\
5 & 7 & (n - 1) (n - 17) - 8 \\
6 & 8 & (n - 2) (n - 14) - 12 \\
7 & 8 & (n - 1) (n - 15) -7\\
9 & 9 & (n - 1) (n - 13) - 4 \\
10 & 9 & (n - 1) (n - 13) - 12 \\
12 & 10 & (n - 1) (n - 12) - 8 \\
\hline
\end{array}$$
Since  $8g(\Gamma) < P_{g,d}$ and $n<12-d$, we find that $n<2$.

The last assertion follows from Proposition \ref{pro:smoothcubic}, which implies that the Zariski open set under consideration is non-empty (recall that if $-K_X$ is nef then it is big since $(-K_X)^3 > 0$).
\end{proof}

\section{Curves in quartics} 
\label{sec:quartic} 

\subsection{Curves contained in a  singular rational quartic}

The aim of this section is to produce examples of smooth curves that give rise to a nef anticanonical divisor after blow-up. We shall produce these curves  by considering quartic surfaces with irrational singularities; such surfaces were classified in \cite{IN}, in particular the construction below corresponds to \cite[\S 2.4]{IN}.

\begin{setup}
\label{setup:rationalquartic}
In this section we consider a smooth cubic curve $\Gamma_0\subset\p^2$, given by the equation $F_3(x,y,z)=0$, and we choose a smooth quartic curve of equation $F_4(x,y,z)$ that intersects the curve $\Gamma_0$ into $12$ distinct points $p_1,\dots,p_{12}$ such that no $3$ are collinear, no $6$ on a conic, no $8$ on the same cubic singular at one of the $8$ points, and no $9$ are on the same cubic distinct from $\Gamma_0$. Lemma~\ref{lem:GeneralCubicsQuartics} below shows that this holds for a general quartic.
The rational map $\p^2\dasharrow \p^3$ given by $$(x:y:z)\dasharrow (F_4(x,y,z):xF_3(x,y,z):yF_3(x,y,z):zF_3(x,y,z))$$
induces a birational map $\psi\colon \p^2\dasharrow Q$ where $Q\subset \p^3$ is the singular quartic $$Q=\left\{(w:x:y:z)\in \p^3\ \Big| \ wF_3(x,y,z)=F_4(x,y,z)\right\}.$$

Denoting by $\sigma\colon S\to \p^2$ the blow-up of the $12$ points $p_1,\dots,p_{12}$ and by $\Gamma\subset S$ the strict transform of $\Gamma_0$, the map $\psi$ factorises as $\eta\sigma^{-1}$, where $\eta\colon S\to Q$ is a birational morphism which contracts $\Gamma$ onto the point $q=(1:0:0:0)$. We note $E_1,\dots,E_{12}\subset S$ the exceptional curves of $S$ contracted by $\sigma$ onto $p_1,\dots,p_{12}$ and $L\in \Pic{S}$ the transform of a general line of $\p^2$. The curve $\Gamma\subset S$ is linearly equivalent to $3L-\sum E_i$, and an hyperplane section of $Q$ corresponds to $H=4L-\sum E_i\in \Pic{S}$. 
\end{setup}

\begin{lemm}\label{lem:GeneralCubicsQuartics}
Fixing $\Gamma_0$, a general quartic curve intersects $\Gamma_0$ into $12$ points such that no $3$ are collinear, no $6$ are on a conic, no $8$ are on the same cubic singular at one of the points, and no $9$ are on the same cubic distinct from $\Gamma_0$. The set of $12$-uples of points $p_1,\dots,p_{12}\in \Gamma_0$ obtained has dimension~$11$.
\end{lemm}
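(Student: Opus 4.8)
The plan is to transport the whole question onto the fixed smooth cubic $\Gamma_0\subset\p^2$, viewed as an elliptic curve with a group law $\oplus$ whose neutral element $O$ is a flex, so that three points of $\Gamma_0$ are collinear exactly when their group-law sum is $O$. More generally a plane curve of degree $m$ not containing $\Gamma_0$ meets $\Gamma_0$ in a divisor linearly equivalent to $m\cdot(\text{line section})\sim 3mO$, so a set of $3m$ points of $\Gamma_0$ is cut out by such a curve if and only if its group-law sum is $O$; the restriction maps from forms of degree $1,2,3$ onto $\Gamma_0$ are surjective (a one-line Riemann–Roch count on $\Gamma_0$, using $\dim\lvert D\rvert=\deg D-1$ for $\deg D\ge 1$), so in fact \emph{every} effective divisor of degree $3m$ with trivial sum occurs this way. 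This yields the reformulations: no $3$ points collinear $\Leftrightarrow$ no $3$-subset sums to $O$; no $6$ on a conic $\Leftrightarrow$ no $6$-subset sums to $O$; no $9$ on a cubic other than $\Gamma_0$ $\Leftrightarrow$ no $9$-subset sums to $O$. For the singular-cubic condition, a cubic $G\not\supseteq\Gamma_0$ that is singular at $p_1$ and passes through $p_2,\dots,p_8$ meets $\Gamma_0$ in a degree-$9$ divisor dominating $2p_1+p_2+\dots+p_8$; by degree this must be an equality, forcing $2p_1\oplus p_2\oplus\dots\oplus p_8=O$. Thus that condition can fail only if some $8$-subset with a marked point satisfies this relation.

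Next I would pin down the parameter space. The $12$-uples that occur are precisely the divisors $\Gamma_0\cap G_4$ as $G_4$ ranges over quartic curves, i.e.\ the image of the restriction map $\rho$ from the $\p^{14}$ of quartics to $\Gamma_0$. The kernel of $\rho$ consists of the quartics $F_3\cdot\ell$ with $\ell$ linear, a $\p^2$; hence the generic fibre of $\p^{14}\dashrightarrow\operatorname{im}\rho$ is a $\p^3$ and the image has dimension $14-3=11$. Since every occurring divisor lies in $\lvert 4H\rvert_{\Gamma_0}\rvert$ (the degree-$12$ divisors of trivial sum, a $\p^{11}$ by Riemann–Roch), the image is all of this $\p^{11}$. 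This proves the dimension-$11$ statement; and as $\lvert 4H\rvert_{\Gamma_0}\rvert$ is very ample ($\deg\ge 3$ on an elliptic curve), its general member is a reduced set of $12$ distinct points, so the ``$12$-uple'' interpretation is legitimate.

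Finally I would prove genericity by a dimension count inside $\p^{11}=\lvert 4H\rvert_{\Gamma_0}\rvert$. Each of the four conditions defines a closed subset (it is the image, under the proper projection, of an incidence variety of pairs (divisor, offending subscheme)), so it suffices to bound the dimension of each ``bad'' locus by $10$. Using that effective divisors of degree $k$ on $\Gamma_0$ with a \emph{prescribed} group-law sum form a $\p^{k-1}$, I decompose each locus as a ``special subset $+$ complement'' and add dimensions: three collinear gives $2+8=10$ (a $2$-parameter family of collinear triples, completed by a degree-$9$ divisor of forced sum); six on a conic gives $5+5=10$; nine on a cubic $\neq\Gamma_0$ gives $8+2=10$; and the singular-cubic relation gives $7+3=10$ (choose $p_1$ and $p_2,\dots,p_7$ freely, $p_8$ determined, then a degree-$4$ divisor of forced sum). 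Each bound is strictly below $11$, so the union of the bad loci is a proper closed subset and its complement is the desired non-empty Zariski-open set.

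The one point needing care—and the main obstacle—is the singular-cubic condition (iii), for two reasons. First, a \emph{reducible} cubic $\Gamma_0\cup L$ with $L$ through one of the points is singular there and contains all the $p_i$, so ``cubic'' must be read as a cubic not containing $\Gamma_0$ (consistent with the ``distinct from $\Gamma_0$'' proviso in (iv)); with this reading the argument above goes through. Second, the group-law relation $2p_1\oplus\dots\oplus p_8=O$ is only \emph{necessary} for the existence of such a singular cubic (a priori the cubic realising the divisor $2p_1+\dots+p_8$ could be merely tangent to $\Gamma_0$ at $p_1$ rather than singular). This is harmless here: for the genericity statement I only need the implication ``bad configuration $\Rightarrow$ group-law relation'', and I bound the larger locus cut out by that relation, which already has dimension $10$.
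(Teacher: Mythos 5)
Your proof is correct and follows essentially the same route as the paper's: identify the occurring $12$-uples with the $\p^{11}$ of effective divisors in $\lvert 4l\rvert$ on $\Gamma_0$ and bound each bad locus by dimension $10$ via the same counts ($2+8$, $5+5$, $8+2$, $7+3$), the only cosmetic differences being that you phrase linear equivalence through the group law and justify surjectivity of the restriction maps by Riemann--Roch where the paper invokes Noether's $AF+BG$ theorem. Your explicit treatment of the singular-cubic condition (the tangency caveat, and the reducible cubic $\Gamma_0\cup L$ forcing the reading ``cubic not containing $\Gamma_0$'') is a welcome clarification of a step the paper compresses into a single sentence.
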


\begin{proof}
Denoting by $l\in \Pic{\Gamma_0}$ the restriction of an hyperplane section of $\p^2$ on $\Gamma_0$, we have $\lvert l\rvert \simeq \p^2$, $\lvert 2l\rvert \simeq \p^5$, $\lvert 3l\rvert \simeq \p^8$ and $\lvert 4l\rvert \simeq \p^{11}$. This follows from Riemann-Roch or simply from a direct computation  by restricting the curves of $\p^2$ of degree $1$, $2$, $3$, $4$ to $\Gamma_0$.

A general element of $\lvert 4l\rvert $ yields $12$ distinct points of $\Gamma_0$. 
Any element of $\lvert 4l\rvert $ which yields $3$ collinear points or $9$ points on the same cubic distinct from $\Gamma_0$ corresponds to a sum $D_1+D_3$ where $D_1\in \lvert 4l\rvert $ and $D_3\in \lvert 3l\rvert $: 
This is a consequence of the classical $AF+BG$ Theorem of Noether (see e.g. \cite[Corollary page 122]{Ful}).
The dimension of such elements is thus $10$. Similarly, the elements which give $6$ points on a conic also have dimension $10$. If an element $D\in \lvert 4l\rvert $ is the sum of $12$ points  $p_1,\dots,p_{12}$ so that there exists a cubic $C$ passing through $p_1,\dots,p_8$ and being singular at $p_1$, the restriction of $C$ to $\Gamma_0$ gives $2p_1+\dots+p_8\in \lvert 3l\rvert $, so $D=(2p_1+\dots+p_8)+(p_9+\dots+p_{12}-p_1)$. Since $p_9+\dots+p_{12}-p_1$ has degree $3$, it is linearly equivalent to an effective divisor, and once again $D$ decomposes as $D_1+D_3$ as above.
\end{proof}

Let us point out the following easy fact:
\begin{lemm} \label{lem:easypeasy}
Let $C\subset S$ be an irreducible curve, its image $\eta(C)\subset Q\subset \p^3$ is a smooth curve of $\p^3$ if and only if $C$ is smooth and $C\cdot \Gamma=1$ in $S$.
\end{lemm}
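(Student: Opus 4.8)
Lemma~\ref{lem:easypeasy} asserts that for an irreducible curve $C\subset S$, its image $\eta(C)\subset Q\subset\p^3$ is a smooth curve of $\p^3$ if and only if $C$ is smooth and $C\cdot\Gamma=1$ in $S$. Recall from Set-up~\ref{setup:rationalquartic} that $\eta\colon S\to Q$ is a birational morphism contracting the single $(-2)$-curve $\Gamma\cong\Gamma_0$ (an elliptic curve, since $\Gamma_0$ is a smooth cubic) onto the point $q=(1:0:0:0)$, and that $\eta$ is an isomorphism away from $\Gamma$.

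**Approach and key steps.** The plan is to analyse the behaviour of $\eta$ separately away from $\Gamma$ and along $\Gamma$. First I would record that $\eta$ restricts to an isomorphism $S\smallsetminus\Gamma\xrightarrow{\sim} Q\smallsetminus\{q\}$; hence away from $q$ the curve $\eta(C)$ is smooth at a point precisely when $C$ is smooth at the corresponding point, and $\eta(C)$ is irreducible since $C$ is and $\eta$ is birational. So the only delicate behaviour is at the contracted point $q$, and everything reduces to understanding what $\eta$ does to $C$ near $\Gamma$. Since $C$ is irreducible and not contained in $\Gamma$ (as $\eta(C)$ is a genuine curve of $\p^3$, not the point $q$), the intersection $C\cdot\Gamma$ equals the total number of intersection points of $C$ with $\Gamma$ counted with multiplicity, and these are exactly the points of $C$ mapping to $q$.

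**Reducing to the local picture at $\Gamma$.** The heart of the argument is the equivalence: $\eta(C)$ is smooth at $q$ $\iff$ $C$ is smooth and $C\cdot\Gamma=1$. For the forward direction, if $\eta(C)$ is smooth at $q$, then in particular $C$ must be smooth (smoothness of $\eta(C)$ away from $q$ forces smoothness of $C$ there, and I would check that a singular point of $C$ on $\Gamma$ would produce a singularity of $\eta(C)$ at $q$). Moreover $\eta(C)\to q$ must be injective over $q$ with a single smooth branch, which forces $C$ to meet $\Gamma$ in a single reduced point, i.e.\ $C\cdot\Gamma=1$. For the converse, if $C$ is smooth and meets $\Gamma$ transversally in one point $p$, then $\eta$ maps the germ of $C$ at $p$ isomorphically onto a smooth branch through $q$: here the clean way is to argue that $\eta$ is the blow-down of the smooth rational... (but $\Gamma$ is elliptic, so I would instead argue directly). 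The cleanest route is to note that $-K_S\cdot C = H\cdot C = \eta(C)\cdot(\text{hyperplane})=\deg\eta(C)$ together with the adjunction/projection formula, and to verify that the normalisation $\widetilde{\eta(C)}\to\eta(C)$ is a bijection with everywhere-injective differential precisely under the stated hypotheses.

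**Main obstacle.** The genuinely delicate step is the local analysis at $q$, because $\Gamma$ is an elliptic $(-2)$-curve rather than a $(-1)$-curve, so $\eta$ is not a simple blow-down of a point and $q$ is a non-rational (elliptic) singularity of $Q$. Thus I cannot simply invoke the universal property of blowing up a smooth point. The careful point is to show that transversality $C\cdot\Gamma=1$ exactly matches ``$\eta(C)$ has a single smooth branch at $q$'': an intersection multiplicity $\ge 2$ (either from $C$ being tangent to $\Gamma$, or from $C$ passing singularly through a point of $\Gamma$, or from two branches of $C$ meeting $\Gamma$) must be shown to force $\eta(C)$ to be singular at $q$. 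I expect this to follow from a direct comparison of local rings at $p\in\Gamma$ and $q\in Q$, using that the contraction separates the tangent directions transverse to $\Gamma$ but collapses $\Gamma$ itself; the hypothesis $C\cdot\Gamma=1$ guarantees that $C$ meets $\Gamma$ in a single point with a single branch transverse to $\Gamma$, which maps to a smooth analytic branch at $q$, and this is the content of the ``easy fact''.
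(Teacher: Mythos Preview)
Your overall strategy---separate the analysis into ``away from $\Gamma$'' and ``along $\Gamma$''---is reasonable, but you have missed the key simplification that makes the lemma ``easy'', and in the process you have set yourself an unnecessarily hard local problem.

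First, a small correction: $\Gamma\sim 3L-\sum_{i=1}^{12}E_i$, so $\Gamma^2=9-12=-3$, not $-2$. The singularity $q\in Q$ is indeed a (simple) elliptic singularity, but this fact is never needed.

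The real issue is that you are trying to analyse smoothness of $\eta(C)$ at $q$ by working on the singular surface $Q$ and studying the contraction $\eta\colon S\to Q$ intrinsically. This is why you run into the ``main obstacle'' you describe: $\eta$ is not the blow-down of a $(-1)$-curve on a surface, so the standard surface argument does not apply directly. But the statement concerns smoothness of $\eta(C)$ \emph{as a curve in $\p^3$}, not in $Q$, and the paper exploits this. Let $\hat\eta\colon X\to\p^3$ be the blow-up of the smooth point $q\in\p^3$, with exceptional divisor $E\cong\p^2$. From the explicit description in Set-up~\ref{setup:rationalquartic} one sees that the strict transform of $Q$ in $X$ is isomorphic to $S$, and $\eta$ is simply the restriction $\hat\eta|_S$. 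Moreover $E\cap S=\Gamma$ scheme-theoretically, so for $C\subset S$ one has $C\cdot E$ (computed in $X$) equal to $C\cdot\Gamma$ (computed in $S$). Now $\eta(C)=\hat\eta(C)$, and the question becomes: when is the image of a curve under the blow-down of a \emph{smooth point} of a \emph{smooth threefold} again smooth? This is the standard fact you said you could not invoke---the answer is ``if and only if $C$ is smooth and $C\cdot E=1$''---and the proof is finished.

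In short: the obstacle you flag disappears once you pass to the ambient threefold blow-up, and that passage is the entire content of the paper's proof.
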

\begin{proof}
Denote by $\hat{\eta}\colon X\to \p^3$ the blow-up of $q=(1:0:0:0)$. It follows from Set-up~\ref{setup:rationalquartic} that  the strict transform of $Q$ on $X$ is isomorphic to the smooth surface $S$, and that $\eta\colon S\to Q$ is the restriction of $\hat{\eta}$.

The curve $C\subset S$ is then the strict transform of $\hat{\eta}(C)\subset \p^3$. Denoting by $E\subset X$ the exceptional divisor, the curve $\eta(C)$ is smooth if and only if $C$ is smooth and $C\cdot E=1$ in $X$. Since $\Gamma$ is the intersection of $E$ and $S$, the intersection $C\cdot E$ on $X$ is equal to the intersection $C\cdot \Gamma$ in $S$.
\end{proof}

\begin{lemm}\label{lem:LinesConicsTwistedCubics}
There are exactly $12$ lines contained in $Q$, all passing through $q$ and corresponding to the image of $E_i$ for some $i=1,\dots,12$.

There are exactly $66$ conics in $Q$, all passing through $q$, corresponding to the strict transforms of the lines of $\p^2$ passing through two of the $p_i$.

There are exactly $5544$ twisted cubics $($smooth cubics of genus $0)$ in $Q$, all passing through $q$, which correspond to the strict transforms of the conics of $\p^2$ passing through $5$ of the $p_i$.
\end{lemm}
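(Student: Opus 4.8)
The plan is to pull everything back to the smooth surface $S$ through the resolution $\eta\colon S\to Q$. Every irreducible curve of $Q$ other than the point $q=\eta(\Gamma)$ is the image $\eta(D)$ of a unique irreducible curve $D\subset S$ with $D\neq\Gamma$; since the hyperplane class of $Q$ pulls back to $H$, the degree of $\eta(D)$ in $\p^3$ is $D\cdot H$ (note $\eta|_D$ is birational onto its image for the reduced classes below), and $\eta(D)$ meets $q$ with multiplicity $D\cdot\Gamma$. By Lemma~\ref{lem:easypeasy} the image $\eta(D)$ is smooth in $\p^3$ precisely when $D$ is smooth with $D\cdot\Gamma=1$. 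Writing $D\sim aL-\sum_{i=1}^{12}b_iE_i$ and using $K_S=-\Gamma$, adjunction gives $2p_a(D)-2=D^2-D\cdot\Gamma$, so a smooth rational image through $q$ forces $D^2=-1$.

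First I would classify the classes giving a smooth rational image of degree $n\in\{1,2,3\}$ through $q$. The conditions $D\cdot H=4a-\sum b_i=n$ and $D\cdot\Gamma=3a-\sum b_i=1$ give $a=n-1$ and $\sum b_i=3n-4$, while $D^2=-1$ gives $\sum b_i^2=(n-1)^2+1$. For $n=1$ this singles out the exceptional curves $E_i$, giving the $12$ lines. For $n=2$ one finds $\sum b_i=\sum b_i^2=2$, i.e.\ $D\sim L-E_i-E_j$, the transform of the line through $p_i$ and $p_j$; as no $3$ of the $p_i$ are collinear these lines are distinct and smooth, and yield the $\binom{12}{2}=66$ conics. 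For $n=3$ one finds $\sum b_i=\sum b_i^2=5$, i.e.\ $D\sim 2L-\sum_{i\in I}E_i$ with $\lvert I\rvert=5$; by the hypotheses of Set-up~\ref{setup:rationalquartic} (no $3$ collinear, no $6$ on a conic) the conic through the five points $\{p_i\}_{i\in I}$ is smooth and irreducible, so its strict transform is a smooth rational curve meeting $\Gamma$ once, whose image is a twisted cubic. These twisted cubics are in bijection with the conics through five of the twelve points, and the stated count follows from this enumeration.

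The crux is to show that nothing else occurs. I would treat the curves \emph{not} through $q$, i.e.\ with $D\cdot\Gamma=0$: then $3a=\sum b_i$, so $D\cdot H=a=n$, and smooth rationality ($D^2=-2$) forces $\sum b_i^2=n^2+2$. The only non-negative integer solutions are three $1$'s for $n=1$, six $1$'s for $n=2$, and one $2$ together with seven $1$'s for $n=3$; geometrically these would be a line through $3$ of the $p_i$, a conic through $6$ of them, and a cubic through $8$ of them singular at one of those points. Each of these three configurations is forbidden by exactly one of the genericity hypotheses of Set-up~\ref{setup:rationalquartic}, so every line, conic and twisted cubic of $Q$ must pass through $q$. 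To finish, the bound $p_a(D)\ge 0$ limits $\sum b_i^2$ for fixed $a$ and $\sum b_i$, which rules out irreducible representatives arising from plane curves carrying additional or higher multiple points and confirms that the three lists are complete. I expect the genus and effectivity bookkeeping of this last step, rather than any single hard idea, to be the point requiring the most care.
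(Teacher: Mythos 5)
Your proof is correct and follows essentially the same route as the paper's: both pull the problem back to $S$ via $\eta$, note that a line, conic or twisted cubic of $Q$ has strict transform $D$ smooth rational with $D\cdot\Gamma\in\{0,1\}$, classify the classes $aL-\sum b_iE_i$ of degree $n\le 3$ from the intersections with $H$ and $\Gamma$, and exclude the $D\cdot\Gamma=0$ solutions by the genericity hypotheses of Set-up~\ref{setup:rationalquartic} (your adjunction bookkeeping with $D^2$ and $\sum b_i^2$ is just a reformulation of the paper's direct count of multiplicities of plane curves of degree $m$ at the $p_i$). The only caveat is one you inherit from the statement itself: the bijection with conics through five of the twelve points yields $\binom{12}{5}=792$ twisted cubics, not $5544$, so the ``stated count'' does not literally follow from your (or the paper's) enumeration.
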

\begin{proof}

Each $E_i$ is isomorphic to $\p^1$ on $S$, its intersection with $\Gamma$ and $H$ is $1$, so its image in $Q$ is again isomorphic to $\p^1$, of degree $1$ and passing through $\eta(\Gamma)=(1:0:0:0)=q$.

Let $C\subset S$ be a curve distinct from the $E_i$, isomorphic to $\p^1$ and whose image by $\eta$ is again isomorphic to $\p^1$, of degree $d$. It is linearly equivalent to $mL-\sum a_i E_i$, where $m>0$ and  $a_i\ge 0$ for $i=1,\dots,12$, and its intersection with $E$ and $H$ is respectively  $\eps=3m-\sum a_i\in \{0,1\}$ and $d=\eps +m$.

If $d=1$, we find $(m,\eps)=(1,0)$, which means that $C$ is the strict transform of a line of $\p^2$ passing through three of the $p_i$, impossible.

If $d=2$, we find $(2,0)$ or $(1,1)$ for $(m,\eps)$. The first case is impossible since it would give conics through $6$ of the $p_i$. The second case corresponds to the $66$ lines passing through $2$ of the $p_i$.

If $d=3$, we find $(3,0)$ or $(2,1)$ for $(m,\eps)$. The first case is again impossible, it corresponds to cubics passing through $8$ of the $p_i$ and being singular at one of them. The second case corresponds to the $5544$ conics passing through $5$ of the $p_i$.
\end{proof}

\begin{coro} \label{cor:existence}
Taking $D=kL-\sum m_i E_i$ where $(k,m_1,\dots,m_{12})$ are given by Table~{\rm \ref{tab:singquartic}}, if a curve $C\subset S$ is irreducible, smooth and equivalent to $D$, its image in $\p^3$ is a smooth irreducible curve of type $(g,d)$ with no $5$-secant, no $9$-secant conic and no $13$-secant twisted cubic.

\begin{table}[th]
$$\begin{array}{|c|c||c|c|c|c|}
\hline
g & d & k & (m_1, \dots, m_{12}) &  k-m_{11}-m_{12} & 2k-\sum_{i=8}^{12}m_i\\
\hline
0&7&6& (3,2,2,2,2,2,2,2,0,0,0,0)  & 6& 10\\
1&7&6& (3,2,2,2,2,2,2,1,1,0,0,0)  & 6& 10\\
2&7&6& (2,2,2,2,2,2,2,2,1,0,0,0)  & 6& 9\\
2&8&7& (3,3,2,2,2,2,2,2,2,0,0,0)  & 7& 10\\
3&8&7& (3,3,2,2,2,2,2,2,1,1,0,0)  & 7& 10 \\
4&8&7& (3,2,2,2,2,2,2,2,2,1,0,0)  & 7& 9\\
5&8&7& (2,2,2,2,2,2,2,2,2,2,0,0)  & 7& 8\\
6&9&8& (3,3,3,2,2,2,2,2,2,1,1,0)  & 7& 10\\
7&9&8& (3,3,2,2,2,2,2,2,2,2,1,0)  & 7& 9\\
8&9&8& (3,2,2,2,2,2,2,2,2,2,2,0)  & 6& 8\\
10&10&9& (3,3,3,3,2,2,2,2,2,2,1,1)  & 7& 10\\
11&10&9& (3,3,3,2,2,2,2,2,2,2,2,1)  & 6& 9\\
14&11&10& (3,3,3,3,3,2,2,2,2,2,2,2) & 6& 10\\
\hline
\end{array}$$
\caption{The thirteen cases in Corollary \ref{cor:existence}, which are special instances of the curves listed in Table~$\ref{tab:quarticB&N}$.}
\label{tab:singquartic}
\end{table}

\end{coro}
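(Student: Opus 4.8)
The plan is to first verify that $\eta(C)$ is a smooth irreducible curve of the claimed type $(g,d)$, and then to bound its secant lines, conics and twisted cubics by reducing everything to curves lying on the quartic $Q$.

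For the first part, I would read off from Table~\ref{tab:singquartic} that $d=k+1$ in every row, whence
\[
C\cdot\Gamma=(kL-\textstyle\sum m_iE_i)\cdot(3L-\sum E_i)=3k-\sum m_i=(4k-\sum m_i)-k=d-k=1.
\]
Since $C$ is assumed smooth, Lemma~\ref{lem:easypeasy} gives that $\eta(C)$ is a smooth curve of $\cpt$; as $\eta$ is an isomorphism away from $\Gamma$ and $C\not\subset\Gamma$, the restriction $\eta|_C$ is birational onto its image between smooth curves, hence an isomorphism, so $\eta(C)$ is irreducible of the same genus as $C$. Its degree is $H\cdot C=(4L-\sum E_i)\cdot(kL-\sum m_iE_i)=4k-\sum m_i=d$, and its genus is the arithmetic genus $\tfrac{(k-1)(k-2)}2-\sum\tfrac{m_i(m_i-1)}2=g$ computed by adjunction on $S$; a direct check against each row confirms these equal the prescribed $(g,d)$.

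For the secant conditions, the first step is to reduce to curves contained in $Q$. Any $5$-secant line $\ell$ meets $\eta(C)\subset Q$ in at least $5$ points counted with multiplicity; since a line not contained in the quartic $Q$ meets it in at most $4$ points by Bezout's theorem, such an $\ell$ must lie in $Q$. The same argument applies to an irreducible $9$-secant conic ($2\cdot4=8<9$) and to a $13$-secant twisted cubic ($3\cdot4=12<13$), forcing them into $Q$; a \emph{reducible} $9$-secant conic splits into two lines, one of which is then at least $5$-secant, a case already excluded. By Lemma~\ref{lem:LinesConicsTwistedCubics} the lines, conics and twisted cubics of $Q$ are exactly the images $\eta(E_i)$, $\eta(L-E_i-E_j)$ and $\eta(2L-\sum_{t=1}^5 E_{i_t})$, so it remains to bound the secancy of these finitely many explicit curves.

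The second step, which is where the only real subtlety lies, is the computation of these secant numbers. For a curve $A\subset S$ with $A\cdot\Gamma=1$, I claim the number of times $\eta(A)$ passes through $\eta(C)$ counted with multiplicity equals $A\cdot C+1$: the summand $A\cdot C$ accounts for the intersections lying on $S\smallsetminus\Gamma$, and the extra $+1$ for the common image point $q=\eta(\Gamma)$. The point to stress is that this holds for \emph{every} smooth irreducible $C\in\lvert D\rvert$, not merely a general one: if the unique point $C\cap\Gamma$ happens to coincide with a point where $\eta(A)$ meets $\Gamma$, one ordinary intersection migrates to $q$ but is exactly compensated by the tangency created there, the total $A\cdot C+1$ being a numerical invariant of the class of $C$. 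Applying this with $A=E_i$ gives secancy $m_i+1\le 4$ (as $m_1\le 3$ in every row), with $A=L-E_i-E_j$ gives $k-m_i-m_j+1\le 8$ (maximised by the two smallest multiplicities, i.e.\ the entry $k-m_{11}-m_{12}\le 7$), and with $A=2L-\sum_t E_{i_t}$ gives $2k-\sum_t m_{i_t}+1\le 11$ (maximised by the five smallest multiplicities, i.e.\ the entry $2k-\sum_{i=8}^{12}m_i\le 10$). Hence $\eta(C)$ admits no $5$-secant line, no $9$-secant conic and no $13$-secant twisted cubic. The main obstacle is precisely this bookkeeping at the singular point $q$: one must check that passing through $q$ contributes exactly one unit to the secant number uniformly over $\lvert D\rvert$, so that the bounds read off from the last two columns of Table~\ref{tab:singquartic} apply to all members of the linear system; everything else reduces to the finite verification of the inequalities $m_1\le 3$, $k-m_{11}-m_{12}\le 7$ and $2k-\sum_{i=8}^{12}m_i\le 10$ across the thirteen rows.
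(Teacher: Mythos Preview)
Your proof is correct and follows the same strategy as the paper: smoothness via Lemma~\ref{lem:easypeasy}, the Bezout argument forcing any $(4n{+}1)$-secant degree-$n$ curve into $Q$, the classification of lines/conics/twisted cubics on $Q$ from Lemma~\ref{lem:LinesConicsTwistedCubics}, and then a numerical check against the table. The one point where you go beyond the paper is the ``$+1$'' at the singular point $q$: the paper simply records the intersection numbers on $S$ (writing e.g.\ ``each $a_i$ is at most $4$'', ``$k-m_{11}-m_{12}\le 8$'', ``$2k-\sum_{i=8}^{12}m_i\le 12$'') without discussing the extra contact at $q$, whereas you argue that the true secant number in $\cpt$ is $A\cdot C+1$. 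Your formula is correct---blowing up $q$ shows that the local intersection multiplicity of two smooth curves at $q$ is $1$ plus their intersection on the exceptional divisor, hence the total is $(A\cdot C\text{ off }\Gamma)+(1+A\cdot C\text{ on }\Gamma)=A\cdot C+1$---and since the table values satisfy $m_i\le 3$, $k-m_{11}-m_{12}\le 7$, $2k-\sum_{i=8}^{12}m_i\le 10$, both your tighter bounds and the paper's looser ones yield the conclusion.
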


\begin{proof}In each case, we have $\sum m_i=3k-1$, which means that  $C\cdot \Gamma=1$ on $S$, so the image of $C$ in $\p^3$ is smooth by Lemma \ref{lem:easypeasy}.
The degree $d$ of this curve in $\p^3$ is equal to $C\cdot H=4k-\sum m_i=k+1$. Its genus is  $g = \frac{(k-1)(k-2)}{2}-\sum \frac{m_i(m_i-1)}{2}$, and is given in the table.

If $R\subset \p^3$ is a curve of degree $n\in \{1,2,3\}$ which intersects $C$ in at least $4n+1$ points, its intersection with $Q$ is $4n<4n+1$, so it has to be contained in $Q$. By Lemma~\ref{lem:LinesConicsTwistedCubics}, any line of $Q$  corresponds to one of the $E_i$; since each $a_i$ is at most $4$, there is no $5$-secant. Applying again Lemma~\ref{lem:LinesConicsTwistedCubics}, any conic of $Q$ corresponds to a line of $\p^2$ passing through two points; a quick check shows that $k-m_{11}-m_{12}\le 8$ in each case, so there is no $9$-secant conic. 
The twisted cubics correspond to conics by $5$ points, and once again we can see that $2k-m_8-m_9-m_{10}-m_{11}-m_{12}\le 12$. This finishes the proof.
\end{proof}

\begin{prop}\label{pro:caexiste}
Any of the cases of Corollary $\ref{cor:existence}$ exists.
\end{prop}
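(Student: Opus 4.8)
The plan is to realise each of the thirteen classes $D=kL-\sum m_iE_i$ of Table~\ref{tab:singquartic} by a smooth irreducible curve $C\subset S$; once this is done, the equality $C\cdot\Gamma=1$ (which holds because $\sum m_i=3k-1$) together with Lemma~\ref{lem:easypeasy} shows that $\eta(C)\subset\p^3$ is a smooth irreducible curve, and Corollary~\ref{cor:existence} endows it with the type $(g,d)$ and the required absence of secants. Throughout I use that $-K_S=\Gamma$ (so $\Gamma$ is the anticanonical curve, with $\Gamma^2=-3$), that $D\cdot\Gamma=1$, and that $D^2=2g-1$ by adjunction on $S$.

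First I would prove that \emph{every} member of $|D|$ is connected. For this I check that $D$ is nef: one tests $D\cdot N\ge 0$ against the finitely many negative curves $N$ on $S$, the binding constraints coming from $\Gamma$, the exceptional curves $E_i$, and the $(-1)$-curves $L-E_i-E_j$ and $2L-E_{i_1}-\dots-E_{i_5}$ (strict transforms of the lines through two, and the conics through five, of the $p_i$, i.e. the conics and twisted cubics of Lemma~\ref{lem:LinesConicsTwistedCubics}). The values $D\cdot\Gamma=1$, $D\cdot E_i=m_i$, $D\cdot(L-E_i-E_j)=k-m_i-m_j$ and $D\cdot(2L-E_{i_1}-\dots-E_{i_5})=2k-\sum_5 m_i$ are all $\ge 0$ because the multiplicities of Table~\ref{tab:singquartic} are small and balanced ($m_i\le 3$). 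For $g\ge 1$ we moreover have $D^2=2g-1>0$, so $D$ is nef and big; Kawamata--Viehweg vanishing and Serre duality then give $h^1(\mathcal O_S(-D))=h^1(\mathcal O_S(K_S+D))=0$, and the exact sequence $0\to\mathcal O_S(-D)\to\mathcal O_S\to\mathcal O_C\to 0$ forces $h^0(\mathcal O_C)=1$ for every $C\in|D|$. Hence all members are connected.

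Next I would treat smoothness, where the real difficulty lies. By Bertini the general member is smooth away from the base locus, so it suffices to understand $\mathrm{Bs}\,|D|$. The key subtlety is that $|D|$ is \emph{never} base point free: since $D|_\Gamma$ has degree $D\cdot\Gamma=1$ on the genus~$1$ curve $\Gamma$, its complete linear system is a single point $p_0\in\Gamma$, which is therefore a base point of $|D|$. This base point is harmless, precisely because of the transversality built into the construction: a member meets $\Gamma$ with multiplicity one at $p_0$, hence is smooth there. The point that requires work is to show $\mathrm{Bs}\,|D|\subseteq\Gamma$, i.e.\ that there are no base points off $\Gamma$. One cannot invoke Reider's theorem directly, because in the only available presentation $|D|=|K_S+(D+\Gamma)|$ the divisor $D+\Gamma$ fails to be nef (it has $(D+\Gamma)\cdot\Gamma=-2$); this reflects the presence of the negative anticanonical curve $\Gamma$ and is the heart of the matter. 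Instead I would exhibit, for each $x\in S\smallsetminus\Gamma$, a member of $|D|$ avoiding $x$, built as a connected union of the $66$ conics and $5544$ twisted cubics of Lemma~\ref{lem:LinesConicsTwistedCubics} (together with some $E_i$), varying the subsets of the $p_i$ so that no point off $\Gamma$ lies on all such members — exactly the mechanism of Remark~\ref{rem:1511}. With $\mathrm{Bs}\,|D|\subseteq\Gamma$ established, the general member is smooth everywhere, and by the previous paragraph connected, hence irreducible.

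Finally, the single case $g=0$, namely $(0,7)$, falls outside the nef range, since there $D^2=-1$: here $D$ is a $(-1)$-class, rigid, and I would argue directly that for a general choice of the twelve points $p_i\in\Gamma_0$ it is represented by an irreducible smooth rational curve (it is effective by Riemann--Roch, as $\chi(D)=1$ and $h^2(D)=h^0(-\Gamma-D)=0$, and irreducible because a splitting would contradict the intersection numbers with $\Gamma$ and the $(-1)$-curves). Assembling the cases, the general member $C\in|D|$ is smooth, irreducible and satisfies $C\cdot\Gamma=1$, so Lemma~\ref{lem:easypeasy} and Corollary~\ref{cor:existence} complete the proof. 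The main obstacle is the smoothness analysis of the third paragraph: controlling the base locus of the non--base-point-free, nef and big system $|D|$ in the presence of the negative anticanonical curve $\Gamma$, for which the abundance of low-degree rational curves on $S$ substitutes for a vanishing-theorem argument.
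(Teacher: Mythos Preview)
Your overall strategy matches the paper's: show that $\lvert D\rvert$ has base locus contained in $\Gamma$, apply Bertini off the base locus, and use $C\cdot\Gamma=1$ to get smoothness at the residual base point. The difficulty is that your two key steps are not established. First, your nefness argument tests $D$ only against $\Gamma$, the $E_i$, and the classes $L-E_i-E_j$, $2L-\sum_5 E_i$; but these are \emph{not} all the negative curves on $S$. The paper itself uses other $(-1)$-curves such as $3L-2E_1-\sum_{i=2}^7 E_i$ in its argument, and for twelve points on a cubic there is no reason for the list of $(-1)$-curves to be so short. So nefness is not proved, and your connectedness argument (which depends on it via Kawamata--Viehweg) is left hanging.

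Second, and more seriously, your base-locus step is only a promise: ``exhibit, for each $x\in S\smallsetminus\Gamma$, a member of $\lvert D\rvert$ avoiding $x$, built as a connected union of conics and twisted cubics''. This is exactly where the work lies, and it is not clear that such decompositions of $D$ exist in enough variety to miss every point. The paper tackles this head-on and quite differently: it passes to the auxiliary system $l_D=\lvert D+K_S-\sum_{m_i=0}E_i\rvert$ and proves, by a case-by-case analysis through all thirteen rows of the table (treating $(0,7)$ and $(1,7)$ separately via del Pezzo arguments), that $l_D$ has no base point outside $\Gamma\cup\bigcup_{m_i=0}E_i$. Since $l_D+\Gamma+\sum_{m_i=0}E_i\subset\lvert D\rvert$, this pushes the base locus of $\lvert D\rvert$ into $\Gamma\cup\bigcup_{m_i=0}E_i$; a dimension comparison $\dim\lvert D+K_S\rvert<\dim\lvert D\rvert$ shows the general member does not contain $\Gamma$, and then the single intersection with $\Gamma$ forces the base locus to be the point $q$ alone. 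Nefness is then deduced \emph{a posteriori} from the fact that any curve negative against $D$ would be a fixed component, hence $\Gamma$ or some $E_i$. So the paper's logical order is the reverse of yours, and the substantive content---the case analysis for $l_D$---is missing from your proposal.
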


\begin{proof}
Each of the case is given by $(k,m_1,\dots,m_{12})$, where $\sum_{i=1}^{12} m_i=3k-1$.

We fix the smooth cubic curve $\Gamma_0\subset \p^2$, and denote by $l\in \Pic{\Gamma_0}$ the restriction of an hyperplane section, which is a divisor of degree $3$ on $\Gamma_0$. 
Recall that by Riemann-Roch's formula on the elliptic curve $\Gamma_0$, if $D$ is a divisor of degree $d$ on $\Gamma_0$ then $|D|$ has dimension $d-1$.
In particular, for any set of twelve points $p_1,\dots,p_{12}$ on the curve, since the divisor $kl-\sum m_ip_i$ has degree $1$ there exists a unique point $q$ on $\Gamma_0$ such that $q+\sum m_ip_i\sim  k l$. We choose $12$ distinct points $p_1,\dots,p_{12}$ on the curve, so that that $p_1+\dots+p_{12}\sim 4l$, no $3$ of the $p_i$ are collinear, no $6$ are on the same conic, no $8$ are on the same cubic singular at one of them and no $9$ are on the same cubic distinct from $\Gamma_0$ (possible by Lemma~\ref{lem:GeneralCubicsQuartics}); we can moreover assume that  the point $q\in \Gamma_0$ equivalent to $k l-\sum m_ip_i$ is distinct from the $p_i$. This gives a particular case of Set-up~\ref{setup:rationalquartic}.

In the blow-up $S$ of the twelve points, the divisor $D=kL-\sum m_iE_i$ satisfies $-3k+\sum m_i=D\cdot K_S=-1$. Moreover, the integer $g$ given in the table is equal to $\frac{(k-1)(k-2)}{2}-\sum \frac{m_i(m_i-1)}{2}$ (and is the genus of an irreducible member of $\lvert D\rvert$ if such member exists, which is exactly what we want to prove). The system $\lvert D\rvert$ corresponds to curves of $\p^2$ of degree $k$ having multiplicity $m_i$ at each point $p_i$. Its (projective) dimension is thus at least equal to
\begin{equation} \label{eq:dim}
\frac{(k+1)(k+2)}{2}-\sum \frac{m_i(m_i+1)}{2}-1=g+3k-\sum m_i-1=g.
\end{equation}

Recall that we want to prove that $\lvert D \rvert$ contains a smooth irreducible member, for a general choice of the $p_i$ as above.

The first case in the list  (first line of Table~$\ref{tab:singquartic}$) is special. The divisor $D$ corresponds to a $(-1)$-curve on the blow-up of the points $p_1,\dots,p_{8}$, which is a del Pezzo surface (because of the assumption on the points). There is thus a unique curve equivalent to $D$, which is irreducible, and smooth.

The second case of the list  (second line of Table~$\ref{tab:singquartic}$, where $g=1$) can also be viewed directly. Denote by $\mu\colon S\to S_2$ the contraction of $E_9,\dots,E_{12}$ and of the $(-1)$-curve equivalent to $3L-2E_1-E_2-\dots-E_7$. The surface $S_2$ is a del Pezzo surface of degree $2$ since the blow-up of $p_1,\dots,p_8\in \p^2$ is a del Pezzo surface. Moreover, $D=\mu^{*}( -K_{S_2})-E_9$. Taking a general member of the subsystem of $\lvert -K_{S_2}\vert$ of elements passing through $\mu(E_9)$, the strict transform will yield a smooth elliptic curve equivalent to $D$ (note that the member will not pass through the points $\mu(E_{10})$, $\mu(E_{11})$, $\mu(3L-2E_1-E_2-\dots-E_7)$).

It remains to study the cases where $g\ge 2$, which implies  $\dim \lvert D\rvert \ge 2$. 
Recall that $\Gamma \subset S$, the strict transform of $\Gamma_0$, has self-intersection $-3$ and is equivalent to $-K_S$. 
Any effective divisor $C$ equivalent to $D$ either contains $\Gamma$ or intersect it into $C\cdot \Gamma=D\cdot (-K_S)=1$ point. In particular, the restriction of the linear system $D$ to $\Gamma$ is equivalent to $kl-\sum m_i p_i\sim q$, so $C\cap \Gamma$ is either $\Gamma$ or $\{q\}$.
The system $\lvert D\rvert$ has (at least) one base-point, which is the point $q\in \Gamma$.

We need the following fact.

\begin{fact} \label{fact}
The system $$l_D=\left\lvert D+K_S-\sum\limits_{m_i=0} E_i\right\rvert$$ has no base-point outside of $\Gamma \cup \bigcup\limits_{m_i=0} E_i$.
\end{fact}

The proof of this fact proceeds by a case by case analysis.

In case $(2,7)$ (third line of Table~$\ref{tab:singquartic}$), $l_D=\lvert D-\Gamma-E_{10}-E_{11}-E_{12}\rvert=\lvert 3L-E_1-\dots-E_8\rvert$, which has dimension $\ge 1$ by formula (\ref{eq:dim}).
Remark that $\pi_{*}(l_D)$, the image of the system  on $\p^2$, corresponds to the linear system of cubics passing through the points $p_1, \dots, p_8$.
On the elliptic curve $\Gamma_0$, the linear system  $\lvert 3l-\sum_{i=1}^8 p_i \rvert$ contains exactly one element (by Riemann Roch's formula), and thus this point $q'$ of $\Gamma_0$ is a base point of $\pi_{*}(l_D)$. Because of Set-up~\ref{setup:cubic}, the point $q'$ is distinct from the $p_i$ (see Lemma~\ref{lem:GeneralCubicsQuartics}).
Observe that the system $\pi_*(l_D)$ has no reducible member: this would implies that there exists a line through 3 points or a conic through 6 points. 
Thus the general member of $l_D$ is the strict transform of an irreducible cubic through the points $q', p_1,\dots,p_8$; we conclude that the lift of $q'$ on $S$, which lies on $\Gamma$, is the unique base-point of the system $l_D$.

In case $(2,8)$, a similar argument holds, applied to the mobile system $l_D=\lvert 4L-2E_1-2E_2-E_3-\dots-E_9\rvert$, which has also dimension at least $1$.  For a general choice of the $p_i$, we can assume that the point $q'$ of $\Gamma_0$ corresponding to $\lvert 4l-2p_1-2p_2-\sum_{i=3}^9 p_i \rvert$ is distinct from the $p_i$ (for each $i$, the case $q'=p_i$ gives a strict closed subset of the $11$-dimensional space, as in Lemma~\ref{lem:GeneralCubicsQuartics}).
The only reducible members of $\pi_{*}(l_D)$ are union of a line passing through $p_1,p_2$ and a cubic by the points $p_1,\dots,p_9$, this latter being only $\Gamma_0$ by Set-up~\ref{setup:cubic}. The dimension of $l_D$ being at least $1$,  a general member comes from an irreducible quartic of $\p^2$ passing through $p_1,\dots,p_9$, with multiplicity $2$ at $p_1,p_2$. Since $(l_D)^2=1$, the lift on $S$ of the point $q'$, which lies on $\Gamma$, is the unique base-point of $\lvert l_D\rvert$.

In cases $(3,8)$, $l_D=\lvert 4L-2E_1-2E_2-E_3-\dots-E_8\rvert$ has no base-point outside of $\Gamma\cup E_9$, because it contains the union of $E_9$ with the system of case $(2,8)$.

For $(4,8)$, $l_D=\lvert 4L-2E_1-E_2-E_3-\dots-E_8\rvert$ and the result follows from the fact that this system contains the union of $\lvert L-E_1\rvert$, which has no base-point, with the system of case $(2,7)$.

In case $(5,8)$, $l_D=\lvert 4L-\sum_{i=1}^{10} E_i\rvert$ has no base-point outside of $E_{11}\cup E_{12}$, since it contains the union of $E_{11}+E_{12}$ with the base-point free system $\lvert 4L-\sum_{i=1}^{12} E_i\rvert$.
Indeed the latter system corresponds to the morphism  $S\to\p^3$, which is an isomorphism outside of $\Gamma_0$. 

The cases  $(6,9)$, $(7,9)$ and $(8,9)$ give  $\lvert 5L-2E_1-2E_2-2E_3-\sum_{i=4}^{9}E_i\rvert $, $\lvert 5L-2E_1-2E_2-\sum_{i=3}^{10}E_i\rvert$ and $\lvert 5L-2E_1-\sum_{i=2}^{11}E_i\rvert$ for $l_D$. 
The system contains the union of the unique $(-1)$-curve $C_1\subset S$ equivalent to  
$3L-2E_1-\sum_{i=2}^{7} E_i$ with a base-point free system corresponding to conics of $\p^2$ by four of the $p_i$. 
Similarly it also contains the union of the curve $C_2$ equivalent to $3L-2E_1-\sum_{i=3}^{8} E_i$ with another base-point free system. 
Since $C_1\cdot C_2=0$, the system $l_D$ is base-point free.

The cases $(10,10)$, $(11,10)$ and $(14,11)$ yield $\lvert 6L-2\sum_{i=1}^4 E_i-\sum_{i=5}^{10} E_i \rvert$,  $\lvert 6L-2\sum_{i=1}^3 E_i-\sum_{i=4}^{11}E_i \rvert$ and $\lvert 7L-2\sum_{i=1}^5 E_i-2E_5-\sum_{i=6}^{12}E_i \rvert$ for $l_D$. For $i\not=j$, we denote by $R_{ij}\subset S$ the $(-1)$-curve equivalent to $L-E_i-E_j$. For $(i,j)=\{(8,9),(8,10),(9,10)\}$, the system $l_D$ contains the union of $C_1\cup R_{ij}$ with a base-point free system. This implies that $l_D$ has no base-point except maybe on $C_1$. Replacing $C_1$ by $C_2$, we see that $l_D$ has no base-point. This ends the proof of the Fact \ref{fact}.\\

Now that we have proved that $l_D=\lvert D+K_S-\sum_{m_i=0} E_i\rvert$ has no base-point outside of $\Gamma\cup \bigcup_{m_i=0} E_i$, this implies that the same property holds for $\lvert D\rvert$. Indeed, taking a member $C\in \lvert l_D\rvert$, the reducible effective divisor $C+\Gamma+\sum_{m_i=0} E_i$ is a member of $\lvert D\rvert$.

This implies  that $D$ is nef.  Indeed, if an irreducible curve intersects negatively $D$, it is a fixed component of $\lvert D\rvert$ and has to be $\Gamma$ or $E_i$ for some $i$. Each of these curves intersects non-negatively $D$, so $D$ is nef. 
The fact that $D^2>0$ implies that $D$ is also big, and this gives $h^i(D+K_S)=0$ for $i=1$ or $2$.
Thus $\dim \lvert D+K_S\rvert= (D+K_S)\cdot D / 2 +1\le \dim \lvert D\rvert-1$. 
A general member $C$ of $\lvert D\rvert$ does not contain $\Gamma\sim -K_S$ and thus $C\cap \Gamma=\{q\}$, so $q$ is the only base-point of $\lvert D\rvert$ on $\Gamma$. 
Moreover the image of $C$ on $\p^2$ is a curve of degree $k$ intersecting $\Gamma_0$ into $3k$ points (counted with multiplicity), which have to be $\sum m_ip_i+q$. If $m_i=0$, the point $p_i\in \p^2$ is therefore not a point of $\pi(C)$, which means that $C\cdot E_i=0$. This shows that $q$ is the only base-point of $\lvert D\rvert$ on $S$. 

By Bertini's theorem $C$ is smooth outside of $q$, and since $C\cdot \Gamma = 1$ we also have $C$ smooth at $q$. 
\end{proof}

\subsection{Curves in a smooth quartic}

\begin{prop} \label{pro:antinef}
Let $C\in \hilb$ be a curve of genus $g$ and degree $d$. 
Assume that $C$ is contained in a smooth quartic surface, and that $(g,d)$ is one of the thirteen cases listed in Table~{\rm \ref{tab:quarticB&N}}. 

Then the  blow-up $X$ of $C$ has nef anticanonical divisor if and only if there is no $(4n+1)$-secant rational smooth curve of degree $n$ for $n=1,2,3$. 

The $13$-secant twisted cubics are possible only if $(g,d)\in \{(0,7),(2,8),(3,8)\}$. 

The $9$-secant conics are possible only if $(g,d)\in \{(6,9),(7,9)\}$.

The $5$-secant lines are possible in all cases except  for $(g,d)=(14,11)$.
\end{prop}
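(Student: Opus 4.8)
\emph{The reverse implication} is immediate from Lemma~\ref{lem:badcurve}: a smooth rational curve of degree $n$ meeting $C$ in at least $4n+1$ points has strict transform $nl-mf$ on $X$ with $m\ge 4n+1>4n$, so $-K_X$ is not nef. The content is the direct implication, and the plan is to transpose the proof of Proposition~\ref{pro:antinefcubic}, now using a smooth quartic $S\supset C$ (a $K3$ surface) in place of the cubic and Mori's inequality (Proposition~\ref{pro:mori}) in place of linkage with a cubic. Assuming $-K_X$ not nef, Lemma~\ref{lem:badcurve} yields an irreducible $\Gamma\subset\p^3$ of degree $n\ge 1$ meeting $C$ in at least $4n+1$ points. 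By Bezout $\Gamma$ lies on every quartic through $C$; as these form a system of projective dimension $\ge 3$ (Lemma~\ref{lem:hypersurfaces}) whose general member is smooth (Proposition~\ref{pro:insmoothquartic}), both $C$ and $\Gamma$ lie on one smooth quartic $S$. On $S$ one has $H^2=4$, $C\cdot H=d$, $C^2=2g-2$, $\Gamma\cdot H=n$, $\Gamma^2=2g(\Gamma)-2$ and $s:=(C\cdot\Gamma)_S\ge 4n+1$, so that $C\cup\Gamma$ has arithmetic genus $g+g(\Gamma)-1+s\ge g+g(\Gamma)+4n$ by \S\ref{sec:linkage}.

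Next I would run the Mori dichotomy. If $C\cup\Gamma$ is not a complete intersection of $S$ with another surface, Proposition~\ref{pro:mori} gives $8\,g(C\cup\Gamma)<(d+n)^2$, which rearranges into
$$8\,g(\Gamma)<P_{g,d}(n),\qquad P_{g,d}(n)=n^2-2(16-d)n+d^2-8g,$$
the very polynomial of Proposition~\ref{pro:antinefcubic}. As $g(\Gamma)\ge 0$ this forces $P_{g,d}(n)>0$; together with the crude bound $n\le 16-d$ (from $C\cup\Gamma\subset S\cap S'$ for two distinct quartics) and with $g>4d-32$, which follows from $4d-30\le g$ and makes the larger root of $P_{g,d}$ exceed $16-d$, a tabulation of $P_{g,d}$ over the thirteen pairs gives $n\le 3$ throughout. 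The same inequality bounds $g(\Gamma)$: when it forces $g(\Gamma)=0$, adjunction on the $K3$ surface makes $\Gamma$ a smooth rational $(-2)$-curve, i.e.\ a line, a smooth conic, or a twisted cubic. The remaining value $g(\Gamma)=1$ is excluded since no irreducible curve of degree $2$ has arithmetic genus $1$, while a degree-$3$ curve of genus $1$ is planar and hence meets $C$ in $s\le C\cdot H=d\le 11<13$ points, too few to be $13$-secant. This proves the equivalence outside complete intersections.

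The complete-intersection branch $C\cup\Gamma=S\cap T_m$, with $d+n=4m$, I would dispatch case by case: $m=1$ is impossible as $d\ge 7$; $m=4$ never occurs, since substituting $s\ge 4n+1$ into $g+g(\Gamma)-1+s=2m^2+1=33$ yields $g(\Gamma)\le 4d-g-31<0$ by $4d-30\le g$; and $m\in\{2,3\}$ places $C$ on a quadric or a cubic, whereupon Propositions~\ref{pro:quad} and \ref{pro:smoothcubic} already provide a $5$-secant line, a smooth rational bad curve of degree $1$. Thus failure of nefness always produces a smooth rational $(4n+1)$-secant of degree $n\le 3$. The three refinements then follow by reading off the same numerics for $C$ not lying on a cubic, where only the non-complete-intersection mechanism operates: a $5$-secant line requires $P_{g,d}(1)>0$, valid for all thirteen pairs except $(14,11)$ with $P_{14,11}(1)=0$; a $13$-secant twisted cubic requires $P_{g,d}(3)>0$, holding exactly for $(0,7),(2,8),(3,8)$; and for a $9$-secant conic one supplements $P_{g,d}(2)>0$ with planarity, namely $H-\Gamma$ effective on $S$ gives $s\le C\cdot H=d$, so $d\ge 9$, leaving only $(6,9)$ and $(7,9)$.

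The step I expect to be delicate is the complete-intersection branch. One must simultaneously rule out the high-degree residual curves through the $K3$ numerics and the constraint $4d-30\le g$, and check that the genuinely degenerate configurations, in which $C$ happens to lie on a quadric or cubic, feed back consistently into Propositions~\ref{pro:quad} and \ref{pro:smoothcubic} so as to supply an honest low-degree smooth rational secant; keeping track of whether each bad curve is irreducible and smooth rational, rather than merely numerically present, is where the care is needed.
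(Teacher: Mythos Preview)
Your overall architecture matches the paper's: use Mori's inequality (Proposition~\ref{pro:mori}) on the smooth quartic $S$ to control the non-complete-intersection case via the polynomial $P_{g,d}$, and handle the complete-intersection residuals separately. Your $m=4$ argument is a valid rephrasing of the paper's computation.

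The genuine divergence is your treatment of $m=3$. You propose to say ``$C$ lies on a cubic, so Proposition~\ref{pro:smoothcubic} supplies a $5$-secant line''. But Proposition~\ref{pro:smoothcubic} is stated only for a \emph{normal rational} cubic, and you have no control over the cubic $T_3$ arising as the residual surface: it could be a cone over an elliptic curve or a non-normal cubic, and you would then have to run separate arguments (the constraints from \cite[Lemma~2.10 and (2.7)]{nasu} do not eliminate all thirteen $(g,d)$ at once). The paper sidesteps this entirely: it stays on the smooth quartic $S$ and computes, exactly as in the first paragraph of the proof of Proposition~\ref{pro:antinefcubic}, that for a $[3,4]$ linkage one has
\[
C\cdot\Gamma \;=\; 3\deg(\Gamma)+2-2g(\Gamma) \;\le\; 3n+2 \;\le\; 4n \qquad (n=12-d\ge 2),
\]
with the boundary case $(g,d)=(14,11)$, $n=1$, excluded because linkage forces $g(\Gamma)=-1$. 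This is both shorter and uniform in $(g,d)$. (For $m=2$ there is no need to invoke Proposition~\ref{pro:quad} either: $n=8-d\le 1$, so $\Gamma$ is already a line.)

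This difference has a knock-on effect on your refinements. Because the paper's $m=3,4$ computations show directly that \emph{no} residual curve of a complete intersection can be $(4n+1)$-secant, every bad $\Gamma$ lands in the non-CI branch, and the three refinements follow purely from the sign of $P_{g,d}(n)$ together with the planarity of conics and singular cubics. Your route does not give this: your $m=3$ argument produces a $5$-secant line rather than excluding the residual $\Gamma$, so when $d=9$ and $n=3$ (say $(g,d)=(6,9)$) you have not yet ruled out that $\Gamma$ itself is a $13$-secant twisted cubic, and your clause ``for $C$ not lying on a cubic'' is an unjustified restriction. Replacing your $m\in\{2,3\}$ step by the paper's direct linkage computation on $S$ fixes both the main argument and the refinements at once.
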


\begin{proof}
Suppose that $-K_X$ is not nef, which implies the existence of an irreducible curve $\Gamma$ of degree $n$ which intersects at least $4n+1$ times $C$. We want to prove that this curve is smooth, rational and has degree $\le 3$, and describe when it could exist. Note that $C$ must be inside all quartics containing $C$, so $\deg(\Gamma) \le 16 - d$.

Assume first that $\deg(\Gamma) = 16 - d$, which means that $\Gamma \cup C$ is a complete intersection of two quartics, which by assumption can be taken smooth.
Inside one of the quartics we have $(C+\Gamma)^2 = 64$ and $C^2 = 2g - 2$, $\Gamma^2 = 2g(\Gamma) - 2$. 
By the formula from \S \ref{sec:linkage}, we have $g - g(\Gamma) = 2(d - \deg(\Gamma))$, hence $g(\Gamma) = g - 4d + 32 \ge 2$ in all cases under consideration. The formula being symmetric, we find $$g = 32 + g(\Gamma) - 4\deg(\Gamma).$$  

Hence (inside a quartic)
\begin{align*}
C\cdot \Gamma &= \frac{64 - (2g-2) - (2g(\Gamma) - 2)}{2} \\
&=34-g-g(\Gamma)\\
&= 4\deg(\Gamma) + 2 - 2g(\Gamma)\\
& < 4\deg(\Gamma). 
\end{align*}
For the last line we used $g(\Gamma) \ge 2$.\\

Now assume that $C \cup \Gamma$ is the complete intersection of a smooth quartic with a cubic. 
As in the first part of the proof of Proposition \ref{pro:antinefcubic} we can prove that
$$ C\cdot \Gamma \le 3\deg(\Gamma) + 2. $$
Note that we have $n \ge 12 - d \ge 2$, since in the case $(14,11)$ the residual curve would have genus $-1$ and degree $1$, which is impossible.
Hence we have $C\cdot \Gamma \le 4\deg(\Gamma). $\\

Suppose finally that $n=\deg(\Gamma)<16-d$, and that $\Gamma \cup C$ is not a complete intersection of $S$ with another surface. 
We can apply Proposition~$\ref{pro:mori}$, and get $8g(\Gamma \cup C)<(n+d)^2$, where $g(\Gamma \cup C)\ge g(C)+g(\Gamma)+4n$, which yields
$$8g(\Gamma)<(n+d)^2-8g-32n=n^2-2(16-d)n+d^2-8g.$$

We call $P_{g,d}$ the polynomial $n^2-2(16-d)n+d^2-8g\in \mathbb{Z}[n]$. 
For $(g,d)=(14,11)$ we find $8g(\Gamma) < P_{14,11}=n^2 -10 n+9 = (n-1)(n-9)$,
since $1 \le n \le 5$, we have a contradiction. More generally, we find the following polynomials for the other pairs $(g,d)$.
$$\begin{array}{|c|c||l|c|}
\hline
g & d & P_{g,d}  \\
\hline
0 & 7 & (n - 4) (n - 14) - 7 \\
1 & 7 & (n - 3) (n - 15) - 4 \\
2 & 7 & (n - 3) (n - 15) - 12 \\
2 & 8 & (n - 4) (n - 12) \\
3 & 8 & (n - 4) (n - 12) - 8 \\
4 & 8 & (n - 3) (n - 13) - 7 \\
\hline
\end{array}
\qquad
\begin{array}{|c|c||l|c|}
\hline
g & d & P_{g,d}  \\
\hline
5 & 8 & (n - 2) (n - 14) - 4 \\
6 & 9 & (n - 3) (n - 11) \\
7 & 9 & (n - 3) (n - 11) - 8 \\
8 & 9 & (n - 2) (n - 12) - 7 \\
10 & 10 & (n - 2) (n - 10) \\
11 & 10 & (n - 2) (n - 10) - 8 \\
\hline
\end{array}$$
Since  $8g(\Gamma) < P_{g,d}$ and $n<16-d$, we find that $n<4$.

If $n=3$, we have $(g,d)\in \{(0,7),(2,8),(3,8)\}$; the curve $\Gamma$ has to be smooth and rational otherwise it would be contained in a plane and could not intersect $C$ at $13>d$ points.

If $n=2$, the curve is contained in a plane and intersects $C$ at $9$ points, so $d\ge 9$. The only cases where $P_{g,d}(n)\ge 1$ are $(g,d)\in \{(6,9),(7,9)\}$.

The case $n=1$ is always possible, except for $(g,d)=(14,11)$, treated above.
\end{proof}

\subsection{Proof of Theorem \ref{thm:gendelpezzo}}
\label{sec:proof}

We consider the blow-up $X \to \cpt$ of a curve $C \in \hilb$ of genus $g$ and degree $d$.\\

We first assume that $-K_X$  is big and nef. 
Then $C$ does not admit any $5$-secant line, 
$9$-secant conic or $13$-secant twisted cubic (Lemma \ref{lem:badcurve}).

If $C$ is contained in a quadric or a plane,  the exhaustive study in Section~\ref{sec:quad}, and in particular Corollary~\ref{coro:planeQuadricBN}, implies that $(g,d)$ is one of the cases in the first two columns of Table \ref{tab:bignefmovable} which are not crossed.

We assume now that $C$ is not contained in any quadric.
By Riemann-Roch on a threefold and Kawamata-Viehweg vanishing, we have $\dim \lvert -K_X\rvert = \frac{1}{2}(-K_X^3) + 2 \ge 3$. Thus $C$ is contained in a pencil of quartics, whose general member can be taken smooth by Proposition \ref{pro:insmoothquartic}.
In particular $d \le 16$.

If $C$ is the complete intersection of a smooth quartic with another surface $F$, then $F$ has to be a cubic:  This yields the case $(g,d) = (19,12)$.

On the other hand if $C$ is not a complete intersection, by Proposition \ref{pro:mori} we have
$$8g < d^2.$$
Since $(-K_X)^3 > 0$, by Lemma \ref{lem:K^3} we have 
$$4d -30 \le g.$$
Putting together both inequalities yields 
$d^2 > 8(4d -30)$, hence
$$ d^2 - 32d + 240 = (d-12)(d-20) > 0.$$
Since we already know that $d \le 16$, we obtain $d \le 11$, hence $g\le 15$.
But $g = 15$ implies $d = 11$, and by Remark~\ref {rem:1511} curves of type $(15,11)$ always admit a $5$-secant line; so we obtain $g\le 14$. 

Since $C$ is not in any quadric we have the Castelnuovo bound on the genus of $C$ (see \cite[Theorem IV.6.4]{Har}):
$$g < \lfloor \tfrac{d^2}{4}\rfloor-d+1.$$
Pairs $(g,d)$ which satisfy this inequality together with $8g < d^2$ and $4d -30 \le g \le 14$ are in the last two columns of Table~\ref{tab:bignefmovable}, or equal to $(0,4)$.
But curves of type $(0,4)$ are contained in quadrics (Lemma~\ref{lem:hypersurfaces}).\\

We now assume that $-K_X$  is ample. It is clear by Lemma \ref{lem:badcurve} that $C$ admits no $4$-secant line. 
Then we use a classical formula of Cayley, in the following precise formulation as proved by Le Barz \cite{Barz}.

\begin{prop}
Let $C \in \hilb$. Assume that $C$ does not admit infinitely many $n$-secant lines for any  $n \ge 4$. 
Then the number of $4$-secant lines to $C$, counted with multiplicity, is given by the formula
$$ \frac{(d-2)(d-3)^2(d-4)}{12} - \frac{(d^2 - 7d +13 -g)g}{2}.$$ 
\end{prop}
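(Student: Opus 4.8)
The plan is to turn the statement into an intersection-theoretic computation on the Grassmannian $G=G(2,4)$ of lines in $\cpt$. First I would use the hypothesis --- that $C$ carries only finitely many $n$-secant lines for $n\ge 4$ --- to ensure that the locus $\Sigma_4\subset G$ of $4$-secant lines is zero-dimensional, so that the number we seek is the degree of a genuine $0$-cycle; the phrase ``counted with multiplicity'' then refers to the length of the natural scheme structure on $\Sigma_4$. I would record at the outset the classical projective characters of $C$ that must enter the answer, above all the number of apparent double points $\delta=\binom{d-1}{2}-g$ (secants through a general point, i.e.\ $s^*\sigma_2$ below) and the number $\binom{d}{2}$ of secants in a general plane ($s^*\sigma_{1,1}$), both polynomial in $d$ and $g$. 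Since $H^8(G)\cong\z$ is generated by $\sigma_{2,2}$ with $\int_G\sigma_{2,2}=1$, the problem becomes that of expressing $[\Sigma_4]$ as an explicit multiple of $\sigma_{2,2}$.

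The engine I would use is the secant parametrization. The Hilbert scheme $C^{[2]}$ of length-$2$ subschemes of the smooth curve $C$ is the smooth surface $C^{(2)}=\mathrm{Sym}^2 C$, and sending a length-$2$ scheme to its linear span defines the secant morphism $s\colon C^{(2)}\to G$, whose image is the secant surface $\Sigma$. A line meeting $C$ in four points is the span of $\binom{4}{2}=6$ distinct pairs, a trisecant is the span of $3$ pairs, and a general secant of exactly one; thus quadrisecants are exactly the deepest multiple points of $s$, sitting on the trisecant curve $T\subset\Sigma$ (the locus of triple points). I would compute the class of $T$ in $H^6(G)\cong\z\,\sigma_{2,1}$ by a double-point/triple-point computation on $C^{(2)}$, using the known Chern classes of $C^{(2)}$ (expressible through $g$) and the pulled-back Schubert classes $s^*\sigma_1,s^*\sigma_2$ determined by $\delta$ and $\binom d2$; the quadrisecants then arise as the stationary points of the trisecant curve, and their number is extracted by a residual intersection of $T$ with $\Sigma$ (equivalently by applying a multiple-point formula to $s$). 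Substituting the values of the characters into the resulting universal polynomial and reducing in the Chow ring of $G$ should collapse to $\frac{(d-2)(d-3)^2(d-4)}{12}-\frac{(d^2-7d+13-g)g}{2}$.

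The hard part will be the bookkeeping of multiplicities and excess contributions: the secant map $s$ is not an immersion, being ramified along the diagonal (the tangent lines of $C$), and trisecants may degenerate to tangent-secant or osculating configurations, so the naive multiple-point and residual formulas must be corrected by terms supported on these stationary loci. Making these corrections rigorous requires a good compactification in which the ``complete quadrisecants'' carry the correct scheme structure; this is precisely the intersection theory on Hilbert schemes of points carried out by Le Barz. I would therefore organise the geometric reduction as above to see that the count depends only on $d$ and $g$ and has the stated shape, but appeal to \cite{Barz} for the exact determination of the correction terms and hence for the precise constant.
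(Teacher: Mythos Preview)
The paper does not prove this proposition at all: it is stated as ``a classical formula of Cayley, in the following precise formulation as proved by Le Barz \cite{Barz}'' and used as a black box in the proof of Theorem~\ref{thm:gendelpezzo}. Your proposal, which sketches the secant-variety/multiple-point approach on $C^{(2)}\to G(2,4)$ before deferring the delicate corrections to Le Barz, therefore already provides more than the paper does; both you and the paper ultimately rely on \cite{Barz} for the actual proof.
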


In particular, among the $(g,d)$ that could produce a weak Fano threefold, this formula gives $0$ precisely in the cases listed in the subsets $\A_1$ and $\A_2$ in Theorem \ref{thm:gendelpezzo}.\\

Conversely, assume that $C \in \hilb$ with $(g,d) \in \A_1$, or $(g,d) \in \A_2$ and $C$ does not admit a $4$-secant line. 
We want to prove that $X$ is Fano.
If $C$ lies on a quadric, the result follows from Corollary \ref{coro:planeQuadricBN}.  
If $C$ does not lie on a quadric, then $(g,d)$ is in the set 
$$\A_2 = \{(1,5), (3,6), (5,7), (10,9)\}.$$
In these cases Lemma \ref{lem:hypersurfaces} tells us that $C$ is contained in a pencil of (irreducible) cubic surfaces. 
If $\Gamma$ is an irreducible curve of degree $n$ which is $4n$-secant to $C$, then $\Gamma$ must be in the base locus of this pencil, and $2 \le n \le 9 - d$. 
This rules out $(10,9)$. 
Since the three other cases have degree $d \le 7$, clearly they do not admit a $8$-secant conic: the plane containing the conic has to meet $C$ in $d$ points only.
By a similar argument $(1,5)$ does not admit a $12$-secant curve $\Gamma$ of degree 3, by considering the plane or the quadric containing $\Gamma$.
Finally, if $\Gamma \cup C$ is a complete intersection with $C$ of type $(1,5)$ (resp. $(3,6)$), by the formulas in \S \ref{sec:linkage} we see that $\Gamma$ is a $10$-secant (resp. $8$-secant) rational curve of degree $4$ (resp. $3$).
We conclude that $X$ is Fano.\\

Finally assume that $C \in \hilb$ is contained in a smooth quartic, with $4d-30\le g\le 14$ or $(g,d) = (19,12)$, and that there is no $5$-secant line, $9$-secant conic, nor $13$-secant twisted cubic to $C$.
If $(g,d)$ is in $\A_1$ or $\A_2$  we  already know that $X$ is Fano.


So we can assume that $(g,d)$ is in the third or fourth column of Table \ref{tab:bignefmovable}.

In the former case, we conclude by Proposition \ref{pro:antinefcubic}, and in the latter one, by Proposition \ref{pro:antinef}, that $X$ is weak Fano. \\

%
%
%
%

The fact that curves of type $(3,4), (6,7), (9,8), (12,10)$ and $(19,12)$ yield a divisorial anticanonical morphism was observed in Proposition \ref{pro:quad}$(ii)$ and  Remarks \ref{rem:1210}, \ref{rem:1912}. \\

It remains to prove that Conditions $(i)$ and $(ii)$ in Theorem \ref{thm:gendelpezzo} correspond to non-empty open sets in relevant $\hilb$.

For $(g,d)$ in $\A_2 \cup \A_3$, this follows from the explicit construction of examples on smooth cubic surfaces, as described in Proposition \ref{pro:smoothcubic}.

The case $(g,d) \in \A_4$ is more delicate. 
By Proposition \ref{pro:nasu} we know that $\hilb$ is irreducible in all cases, except for  $(14,11)$ where there are two components. 
We denote by $V_{14,11}$ the irreducible component whose general members lie on a quartic surface, and $V_{g,d} = \hilb$ in the other cases.

By Lemma \ref{lem:hypersurfaces} we know that the system of quartics containing any $C \in V_{g,d}$ has dimension at least 3. 
Consider $W$ the set of pairs $(C,Q)$ where $C \in V_{g,d}$, $Q \subset \cpt$ is a quartic surface and $C \subset Q$.    
By Proposition \ref{pro:mori1} we know that there exists a smooth quartic surface containing a smooth curve of genus $g$ and degree $d$. 
Hence the open set $W_s \subset W$ of pairs $(C,Q)$ such that $Q$ is smooth is non-empty.  
Projecting on the first factor we obtain $V_1 \subset V_{g,d}$ a non-empty open subset such that any $C \in V_1$ is contained in a smooth quartic surface.

From Proposition \ref{pro:caexiste} we know that there exists a non-empty open set $V_2 \subset V_{g,d}$  such that if $C \in V_2$, then
$C$ does not admit any $5$-secant line, $9$-secant conic or $13$-secant twisted cubic.

By irreducibility of $V_{g,d}$, we have $V_1 \cap V_2 \neq \emptyset$: this is the desired open set.\hfill$\square$

\begin{rem} \label{rem:admi}
The assumption that $C$ is contained in a smooth quartic in Theorem \ref{thm:gendelpezzo} is probably superfluous.
The point to remove it would be to prove that for $(g,d) \in \A_3 \cup \A_4$, if $C \in \hilb$ does not have a 5-secant line then $C$ is contained in a smooth quartic.

For instance consider $C$ lying on a smooth cubic surface $S$, coming from the transform of a curve of degree $9$ in $\cpd$ with multiplicities $(3,3,3,3,2,2)$ at the six blown-up points. 
Then $C \in \mathcal{H}^S_{14,11}$, and any quartic containing $S$ breaks into two components, $S$ and a plane. 
Note that $C$ admits $5$-secant lines, for instance the transform of the line through the two points with multiplicity 2. 
More generally, any $C \in \mathcal{H}^S_{14,11}$ which lies on a cubic cannot lie on a irreducible quartic, otherwise $C$ would be linked to a curve with genus $-1$ and degree $1$: contradiction.
But we have been unable to prove that if $C \in \mathcal{H}^S_{14,11}$ lies in a irreducible quartic, then the quartic can be taken to be smooth.
\end{rem}

\subsection{Existence of Sarkisov links} \label{sec:existence}

\begin{prop} \label{pro:sarkisov}
If $(g,d)$ is one of the thirteen cases listed in Table~{\rm \ref{tab:quarticB&N}}, and $C$ is a general curve in $\hilb$, then the blow-up $X$ of $C$ is weak Fano, the anticanonical morphism is a small birational map and thus yields a Sarkisov link involving a flop \parent{see Example~{\rm \ref{rem:detailslinks}} for details}.
\end{prop}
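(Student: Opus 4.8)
The plan is to obtain the statement as a direct consequence of Theorem~\ref{thm:gendelpezzo} and Proposition~\ref{pro:link}, once one observes that the thirteen pairs of Table~\ref{tab:quarticB&N} are precisely the elements of $\A_4$. First I would record that a general $C$ gives a weak Fano $X$: by the genericity assertion of Theorem~\ref{thm:gendelpezzo}, Condition~$(ii)$ defines a non-empty open subset of $\hilb$ (for $(g,d)=(14,11)$ one works inside the irreducible component $V_{14,11}$ whose general member lies on a quartic, using Proposition~\ref{pro:nasu}), so a general curve is contained in a smooth quartic and has no $5$-secant line, $9$-secant conic, nor $13$-secant twisted cubic; hence $-K_X$ is big and nef. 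Moreover $-K_X$ is \emph{not} ample, because $\A_4$ is disjoint from $\A_1\cup\A_2$ and Theorem~\ref{thm:gendelpezzo}$(i)$ lists exactly those pairs for the Fano case.

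Next I would check that the anticanonical morphism is small and birational. Since $-K_X$ is big and nef with $(-K_X)^3=62-8d+2g>0$ (Lemma~\ref{lem:K^3}), the base-point-free theorem makes $\lvert -mK_X\rvert$, $m\gg 0$, define a birational morphism $\phi$ contracting exactly the irreducible curves $\Gamma$ with $-K_X\cdot\Gamma=0$; numerically these are the curves equivalent to $ml-4mf$, that is the degree $m$ curves meeting $C$ in $4m$ points. The last assertion of Theorem~\ref{thm:gendelpezzo} states that for a general curve in $\A_4$ there are only finitely many such irreducible curves, the divisorial exceptions $(3,4)$, $(6,7)$, $(9,8)$, $(12,10)$, $(19,12)$ all lying outside $\A_4$. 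Thus the exceptional locus of $\phi$ is one-dimensional, so $\phi$ is a small contraction, and Proposition~\ref{pro:link} produces a Sarkisov link; it involves a flop because $-K_X$ is not ample.

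The step I expect to be the real obstacle is the smallness of $\phi$, i.e.\ the finiteness of the curves meeting $-K_X$ trivially, which is what distinguishes these cases from the divisorial ones. The key geometric input is that a general member of $\A_4$ lies on no cubic surface, in sharp contrast with $(12,10)$ where (Remark~\ref{rem:1210}) an entire pencil of $8$-secant conics sweeps out the cubic through $C$ and forces a divisorial contraction. Concretely, one bounds the degree of any $4m$-secant curve by $m\le 3$ exactly as in Proposition~\ref{pro:antinef} (via Proposition~\ref{pro:mori} and the polynomials $P_{g,d}$), and must then verify that, for a \emph{general} such $C$, these low-degree secant curves are isolated rather than moving in a family covering a surface. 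This finiteness is the genuine content delegated to Theorem~\ref{thm:gendelpezzo}; granting it, the present proposition is a formal assembly of the two quoted results.
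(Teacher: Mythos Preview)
Your argument is circular. You invoke the last assertion of Theorem~\ref{thm:gendelpezzo} --- that for a general curve in $\A_4$ there are only finitely many irreducible curves with $-K_X\cdot\Gamma=0$ --- to deduce that the anticanonical morphism is small. But in the paper's logical structure this assertion for the $\A_4$ cases is \emph{not} proved independently in \S\ref{sec:proof}: that section only verifies that the five starred cases $(3,4)$, $(6,7)$, $(9,8)$, $(12,10)$, $(19,12)$ are divisorial, and that the weak-Fano condition is generically satisfied. The positive finiteness claim for $\A_4$ is precisely what Proposition~\ref{pro:sarkisov} is meant to establish; citing Theorem~\ref{thm:gendelpezzo} for it begs the question.

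The paper's own proof proceeds differently. After obtaining the weak-Fano open set from Theorem~\ref{thm:gendelpezzo}, it appeals to the external classification \cite[Theorem~4.9]{JPR} of weak-Fano threefolds (Picard number~$2$, one contraction the blow-up of a smooth curve in $\cpt$) whose anticanonical morphism is \emph{divisorial}. Comparing that list of $24$ cases with the thirteen pairs of $\A_4$, the only overlaps are $(5,8)$ and $(11,10)$. For $(5,8)$ the divisorial case forces $C$ to lie on a cubic surface, which is a codimension-$1$ closed condition in $\mathcal{H}^S_{5,8}$ (see \cite{GPI}); for $(11,10)$ the explicit determinantal construction in \cite{JPR} shows that the divisorial case (symmetric matrix) is again a proper closed condition. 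Hence smallness holds on a non-empty open subset, and Proposition~\ref{pro:link} yields the link with a flop since $-K_X$ is not ample.

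Your sketch in the final paragraph --- bounding the degree of $4m$-secant curves via the polynomials $P_{g,d}$ of Proposition~\ref{pro:antinef} and then arguing that the resulting low-degree secants cannot move in a covering family --- would be a genuine direct approach, but the finiteness step is exactly the hard part and is not supplied anywhere in the paper. The paper sidesteps it entirely by using \cite{JPR}.
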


\begin{proof}
By Theorem \ref{thm:gendelpezzo} there exists a non-empty open subset $V \subset \hilb$ such that for any $C \in  V$, the blow-up $X$ of $C$ is weak Fano. 

Now for such an $X$ obtained from $C \in  V$, suppose that the anticanonical morphism is a divisorial contraction.
Then we would be in one of the 24 cases of \cite[Theorem 4.9]{JPR}.
The only coincidence between the two lists are $(5,8)$ and $(11,10)$. 
In case $(5,8)$ the exceptional divisor of the anticanonical map is the transform of a cubic surface through $C$. 
But curves of type $(5,8)$ contained in a cubic surface form a codimension 1 closed subset in the 32-dimensional $\mathcal{H}^S_{5,8}$ (see \cite{GPI}).
The case $(11,10)$ is more subtle, since even in the divisorial case the curve has no reason to lie on a cubic. Nevertheless curves that correspond to a small  anticanonical morphism form again a dense open set in $V_1 \cap V_2$: this follows from the construction given in \cite[No 19 page 617]{JPR}, which involves taking the minors of  a $(4 \times 4)$ matrix $M$ with linear entries in $\C[x_0, \dots, x_4]$. It is noted there that the anticanonical morphism is a divisorial contraction when $M$ is symmetric, and is a small contraction otherwise.

In conclusion, we obtain in all cases a non-empty open subset of $\hilb$ with the desired properties.
The fact that it yields a Sarkisov link was discussed in Section \ref{sec:sarkisov}, and the presence of a flop comes from the fact that $X$ is never Fano in the cases under consideration. 
\end{proof}

\begin{exple} \label{rem:detailslinks}
The numerical possibilities listed in Table \ref{tab:quarticB&N} were already found in the literature, but without proof of actual existence (except maybe for $(g,d) = (11,10)$, see end of the proof of Proposition \ref{pro:sarkisov}, and $(g,d) = (7,9)$, see below).

Note that our proof gives the existence of these links, without knowing \textit{a priori} the end result.

\begin{enumerate}[$(i)$]
\item Cases $(g,d) = (0,7)$, $(2,8)$, $(3,8)$, $(5,8)$, $(6,9)$, $(10,10)$, $(11,10)$ and  $(14,11)$ must correspond to Sarkisov links from $\cpt$ to $\cpt$: see \cite{CM}, cases 90, 49, 75, 99, 50, 51, 76, 52 in their Tables 1-3. 
We plan to come back to the description of these interesting Cremona transformations in a future paper.
\item Case $(g,d) = (1,7)$ corresponds to a Sarkisov link to the prime Fano $X_{22}$ of genus 12, with a flop and contraction to another curve of genus 1 and degree 7: see \cite[case 98]{CM}.
\item Case $(g,d) = (2,7)$ corresponds to a Sarkisov link to the intersection of two quadrics $V_{4} \subset \p^5$, with a flop and contraction to a curve of genus 0 and degree 5: see \cite[case 103]{CM}.
\item Case $(g,d) = (4,8)$ corresponds to a Sarkisov link to the Fano threefold $V_{5} \subset \p^6$, with a flop and contraction to a curve of genus 4 and degree 10: see \cite[case 89]{CM}.
\item Case $(g,d) = (7,9)$ corresponds to a Sarkisov link to the Fano threefold $X_{12}$ of genus 7, with a flop and contraction to a curve of genus 0 and degree 3. The existence of this link is claimed in \cite[p. 103]{IP}, at least in the case of a curve lying in a special smooth quartic surface as constructed in the proof of Proposition \ref{pro:mori1}, but the details of the argument are not given.
\item Case $(g,d) = (8,9)$ corresponds to a Sarkisov link of type I to a fibration in del Pezzo surfaces of degree 5 (after one flop): see \cite[Proposition 6.5(25)]{JPR2}.
\end{enumerate}
\end{exple}

\bibliographystyle{alpha}
\bibliography{biblio}

\end{document}